\newcommand{\ind}{\mathbbm{1}}
\newtheorem {theorem}{Theorem}[section]
\newtheorem {proposition}[theorem]{Proposition}
\newtheorem {corollary}[theorem]{Corollary}
\theoremstyle{definition}
\theoremstyle{remark}
\newtheorem {remark}[theorem]{Remark}
\def\EE{\mathbb{E}}
\def\GG{\mathbb{G}}
\def\KK{\mathbb{K}}
\def\NN{\mathbb{N}}
\def\PP{\mathbb{P}}
\def\RR{\mathbb{R}}
\def\RRd1{\mathbb{R}^{d+1}}
\def\SS{\mathbb{S}}
\def\SSd{\mathbb{S}^d}
\def\ZZ{\mathbb{Z}}
\def\bE{\mathbf{E}}
\def\bP{\mathbf{P}}
\def\cA{\mathcal{A}}
\def\cB{\mathcal{B}}
\def\cF{\mathcal{F}}
\def\cH{\mathscr{H}}
\def\dint{\textup{d}}
\def\SO{\textup{SO}}
\def\pos{\textup{pos}}
\def\lin{\textup{lin}}
\def\cv{\breve{v}}
\let\@fnsymbol\@alph
\begin{document}

\title{\bfseries Faces in random great hypersphere tessellations}

\author{Zakhar Kabluchko\footnotemark[1] ~and Christoph Th\"ale\footnotemark[2]}

\date{}
\renewcommand{\thefootnote}{\fnsymbol{footnote}}

\footnotetext[1]{Institut f\"ur Mathematische Stochastik, Westf\"alische Wilhelms-Universit\"at M\"unster, Germany. Email: zakhar.kabluchko@uni-muenster.de}

\footnotetext[2]{Fakult\"at f\"ur Mathematik, Ruhr-Universit\"at Bochum, Germany. Email: christoph.thaele@rub.de}

\maketitle

\begin{abstract}
\noindent  The concept of typical and weighted typical spherical faces for tessellations of the $d$-dimensional unit sphere, generated by $n$ independent random great hyperspheres distributed according to a non-degenerate directional distribution, is introduced and studied. Probabilistic interpretations for such spherical faces are given and their directional distributions are determined. Explicit formulas for the expected $f$-vector, the expected spherical Querma\ss integrals and the expected spherical intrinsic volumes are found in the isotropic case. Their limiting behaviour as $n\to\infty$ is discussed and compared to the corresponding notions and results in the Euclidean case. The expected statistical dimension and a problem related to intersection probabilities of spherical random polytopes is investigated.
\bigskip
\\
{\bf Keywords}. {Great hypersphere tessellation, $f$-vector, intersection probability, spherical intrinsic volume, spherical Querma\ss integral, spherical stochastic geometry, statistical dimension, typical spherical face, weighted spherical face}\\
{\bf MSC}. Primary  52A22, 60D05; Secondary 52A55, 52B11.
\end{abstract}


\section{Introduction}

The analysis of random tessellations in $\RR^d$ and the resulting random polytopes has a long tradition in stochastic geometry. Particularly attractive, at least from a mathematical point of view, is the class of Poisson hyperplane tessellations for which many explicit results are available, see e.g.\ \cite{Matheron,MilesFlats,SW} for representative overviews. This class of models has recently found also interesting applications in compressed sensing \cite{BaccelliOReilly,BilykLacey}. This paper deals with the natural analogue of such tessellations in spherical spaces of constant positive curvature $+1$. More precisely, we are dealing with random tessellations of the $d$-dimensional unit sphere $\SS^d\subset\RR^{d+1}$ generated by $n\in\NN$ independent random great hyperspheres. Such tessellations were previously investigated in \cite{ArbeiterZaehle,CoverEfron,HugSchneiderConicalTessellations,MilesSphere}. These works deal with mean value relations for geometric characteristics of these tessellations as well as with their so-called typical and, occasionally, also with their weighted typical cells. These cells arise as follows. The typical cell is a random cell which is selected uniformly from the (almost surely finite) collection of all cells, while the weighted typical cell is its size biased version, where size is measured by the $d$-dimensional spherical Lebesgue measure. In contrast to these previous works we are not only interested in the cells of random great hypersphere tessellations, but also in their lower-dimensional faces. This gives rise to new questions, mainly related to the notion of direction, which in the spherical set-up typically have different answers compared to their Euclidean counterparts. Motivated by the approach in \cite{SchneiderWeightedFaces}, which deals with Poisson hyperplane tessellations in $\RR^d$, for any $k\in\{0,1,\ldots,d\}$ we introduce two different types of random $k$-dimensional spherical faces associated with a great hypersphere tessellation, the $k$-dimensional typical spherical face and the $k$-dimensional weighted typical spherical face. In the equivalent language of conical random tessellations, the typical spherical $k$-face generalizes to lower dimensions the concept of the Schl\"afli random cone studied in \cite{CoverEfron,HugSchneiderConicalTessellations,KabluchkoTemesvariThaeleCones}. We investigate the relation between and provide probabilistic interpretations of these lower-dimensional spherical random polytopes. In particular, we determine explicitly several of their key characteristic quantities in the isotropic case, that is, if the distribution of the underlying great hyperspheres is the uniform distribution on the space of great hyperspheres. This is essentially based on the connection of weighted typical spherical $k$-faces by spherical polarity to random convex hulls on half-spheres, which in turn can be analysed using the language of random beta polytopes, see \cite{kabluchko_poisson_zero,KabluchkoTemesvariThaele,KabluchkoThaeleZaporozhets,MarynychKabluchkoTemesvariThaele}. To be precise, for each $k\in\{0,1,\ldots,d\}$, we determine explicitly the expected number $\ell$-dimensional spherical faces, $\ell\in\{0,1,\ldots,k-1\}$, of the typical and the weighted typical $k$-dimensional spherical face. We also provide fully explicit formulas for the expected spherical Querma\ss integrals and the expected spherical intrinsic volumes. We also deal with the expected statistical dimension and study a problem from geometric probability, dealing with intersection probabilities of spherical random polytopes.

Our paper continues and adds to a recent active line of research on random geometric systems in non-Euclidean spaces. Recent works directly linked with this text are the articles \cite{BaranyHugReitznerSchneider,kabluchko_poisson_zero,MarynychKabluchkoTemesvariThaele} on spherical convex hulls on half-spheres, the papers \cite{DeussHoerrmannThaele,GodlandKabluchko,HeroldHugThaele,HugReichenbacher,HugSchneiderConicalTessellations,HugThaele} dealing with different types of hyperplane or splitting tessellations in spherical (and hyperbolic) spaces or the publications \cite{HugReichenbacher,KabluchkoThaele_VoronoiSphere} about Voronoi tessellations on the sphere. Let us also mention here the work \cite{SchneiderKinematicCones}, which studies intersection probabilities for deterministic and random cones, and \cite{HoldenPeresZhai}, where random tessellations of the $2$-dimensional sphere generated by a gravitational allocation scheme are investigated.

\medspace

The remaining parts of this text are structured as follows. In Section \ref{sec:HypersphereTess} we recall some preliminaries, introduce random great hypersphere tessellations and formally define the notions of typical and weighted typical spherical $k$-faces. Probabilistic interpretations of such faces in terms of intersections are provided in Section \ref{sec:ProbInterpretation}. We also show there that the weighted typical spherical $k$-face is the size biased version of the typical spherical $k$-face. The directional distribution of both types of faces as well as the distribution of faces with given direction are determined in Section \ref{sec:DirDistr}, whereas in Section \ref{sec:Iso} we concentrate (mainly) on the isotropic case. Especially, we provide there explicit formulas for the expected $f$-vector, the expected spherical Querma\ss integrals and the expected spherical intrinsic volumes of typical and weighted typical spherical $k$-faces. We also study their asymptotic behaviour, determine their statistical dimension and analyse a question related to intersection probabilities of spherical random polytopes.

\section{Great hypersphere tessellations and typical spherical faces}\label{sec:HypersphereTess}

\subsection{Preliminaries}

\paragraph{Spaces of subspheres and polytopes.} We fix a space dimension $d\geq 1$ and consider the $d$-dimensional unit sphere $\SS^d\subset\RR^{d+1}$. For $k\in\{0,1,\ldots,d\}$ we denote by $\GG_s(d,k)$ the \textit{spherical Grassmannian} of $k$-dimensional great subspheres of $\SS^d$  and refer to the elements of $\GG_s(d,d-1)$ as great hyperspheres. Also observe that $\GG_s(d,d)=\SS^d$. Each of the spaces $\GG_s(d,k)$ carries a unique rotation invariant Haar probability measure, $\nu_k$, see \cite[Chapter 6.5]{SW}. Following the convention in \cite{SW}, we also define the constant
$$
\omega_{k+1} = \cH^k(S) = {2\pi^{k/2}\over\Gamma({k\over 2})},
$$
where $S\in\GG_s(d,k)$ is arbitrary and $\cH^k(\,\cdot\,)$ denotes the $k$-dimensional Hausdorff measure.

We write $\KK_s(d)$ for the space of spherical convex subsets of $\SS^d$, where we recall that a subset of $\SSd$ is convex if it is the intersection of $\SSd$ with a closed convex cone in $\RRd1$ different from $\{0\}$. Moreover, we use the symbol $\PP_s(d)$ to indicate the space of spherical polytopes in $\SS^{d}$.
By such a polytope we understand the intersection of $\SS^{d}$ with a polyhedral cone in $\RR^{d+1}$. 
The intersection of $\SS^d$ with an $(\ell+1)$-dimensional face of the polyhedral cone generating a spherical polytope $P\in\PP_s(d)$ is called an \textit{$\ell$-dimensional spherical face} of $P$, $\ell\in\{0,1,\ldots,k\}$.

The Borel $\sigma$-fields on $\KK_s(d)$ and $\PP_s(d)$ generated by the spherical Hausdorff distance are denoted by $\cB(\KK_s(d))$ and $\cB(\PP_s(d))$, respectively. More generally, if $E$ is a topological space then $\cB(E)$ will denote the Borel $\sigma$-field on $E$ generated by the given topology.

\begin{figure}
\centering
\includegraphics[width=\columnwidth]{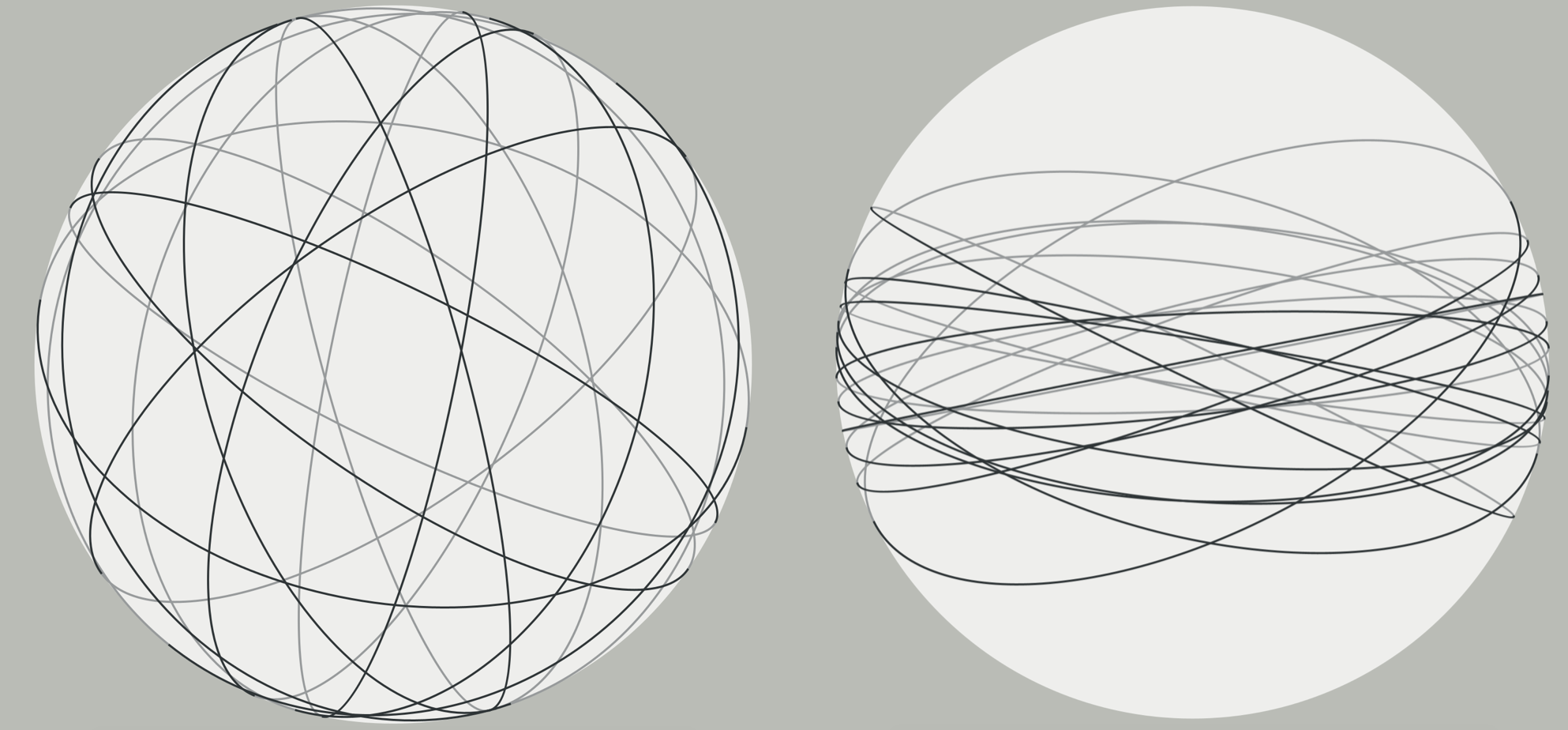}
\caption{Two realizations of great hypersphere tessellation of $\SS^2$ with different directional distributions $\kappa$. In the left picture $\kappa$ is the uniform distribution on $\GG_s(2,1)$, while in the right picture the great hyperspheres are concentrated close to the equator.}
\end{figure}

\paragraph{Great hypersphere tessellations.} In this paper, $\kappa^\circ$ will denote an even probability measure on $\SS^{d}$ with the property that $\kappa^\circ(S)=0$ for any great hypersphere $S\in\GG_s(d,d-1)$. The image measure of $\kappa^\circ$ under the orthogonal complement map $\perp:\SS^{d}\to\GG_s(d,d-1),u\mapsto u^\perp\cap\SS^{d}$ is denoted by $\kappa$. It is a probability measure on the space $\GG_s(d,d-1)$ of great hyperspheres of $\SS^d$. We let $n\in\NN$ and consider a binomial point process $\xi_n$ on $\GG_s(d,d-1)$ with intensity measure $n\kappa$. That is,
$$
\xi_n=\{S_1,\ldots,S_n\}
$$
with independent random great hyperspheres $S_1,\ldots,S_n$ all having distribution $\kappa$. We assume that all random elements we consider are defined on a probability space $(\Omega,\cA,\bP)$ and denote by $\bE$ expectation (integration) with respect to $\bP$. We note that our assumption on $\kappa$ (or $\kappa^\circ$) implies that $\kappa$ is \textit{non-degenerate} in the sense that with probability one the great hyperspheres $S_1,\ldots,S_n$ are in general position, which means that with probability one for any $1\leq i_1<\ldots< i_{d-k}\leq n$, $k\in\{1,\ldots,d\}$, we have that $S_{i_1}\cap\ldots\cap S_{i_{d-k}}\in\GG_s(d,k)$. The great hyperspheres from $\xi_n$ partition $\SS^{d}$ into a random collection of spherical polytopes, which are referred to as \textit{cells} in the sequel. By a classical result of Steiner (for $\SS^2$) and Schl\"afli (for general $\SS^d$) the random collection almost surely consists of
\begin{equation}\label{eq:Cnd}
C(n,d) = 2\sum_{r=0}^{d}{n-1\choose r}
\end{equation}
spherical random polytopes $P_1,\ldots,P_{C(n,d)}$, say. We call
$$
T_{n,d}=T_{d}(S_1,\ldots,S_n)=\{P_1,\ldots,P_{C(n,d)}\}
$$
a random \textit{great hypersphere tessellation} of $\SS^{d}$ with \textit{intensity} $n$ and \textit{directional distribution} $\kappa$. Let us remark that in this generality the model has previously been considered in \cite{HugSchneiderConicalTessellations}, while the works \cite{ArbeiterZaehle,MilesSphere} only deal with the special case where $\kappa$ is the uniform distribution $\nu_{d-1}$ on $\GG_s(d,d-1)$ (or, equivalently, $\kappa^\circ$ is the uniform distribution on $\SS^{d}$). We refer to the latter situation as the \textit{isotropic} case, which we will intensively study in Section \ref{sec:Iso}.

For a spherical polytope $P\in\PP_s(d)$ and $k\in\{0,1,\ldots,d\}$ we write $\cF_k(P)$ for the set of its $k$-dimensional spherical faces, called spherical $k$-faces for short. The cardinality of $\cF_k(P)$ is denoted by $f_k(P)=|\cF_k(P)|$. We also write
$$
\cF_k(T_{n,d})=\cF_k(S_1,\ldots,S_n)=\bigcup_{P\in T_{n,d}}\cF_k(P)
$$
for the set of spherical $k$-faces of $T_{n,d}$, which is generated by $S_1,\ldots,S_n$. Using the non-degeneracy property of $\kappa$ it easily follows that almost surely
\begin{equation}\label{eq:Cndk}
|\cF_k(T_{n,d})| = {n\choose d-k}C(n-d+k,k) =: C(n,d,k),
\end{equation}
see \cite[Equation (16)]{HugSchneiderConicalTessellations}.

\begin{remark}
We would like to emphasize that although we are using the same notation as in \cite{HugSchneiderConicalTessellations,SW}, we are working on $\SSd$, while in \cite{HugSchneiderConicalTessellations,SW} the $(d-1)$-dimensional unit sphere $\SS^{d-1}$ is considered. In particular, this implies that the sum in the definition \eqref{eq:Cnd} of constants $C(n,d)$ runs up to $d$ in our case and not to $d-1$ as in \cite{HugSchneiderConicalTessellations,SW}. This should be kept in mind when our results are compared to those in the literature.
\end{remark}

\subsection{Typical and weighted typical spherical faces}

We assume the same set-up as in the previous section and fix $k\in\{0,1,\ldots,d\}$. In order to avoid the discussion of degenerate cases, we assume that the number $n$ of great hyperspheres satisfies $n\geq d-k$.

To obtain the \textit{typical spherical $k$-face} $Z_{n,d}^{(k)}$ of the great hypersphere tessellation $T_{n,d}$ we choose uniformly at random one of the $C(n,d,k)$ spherical $k$-faces of $T_{n,d}$ (note that by our assumption on $n$ we have that $\cF_k(T_{n,d})\neq\varnothing$). The distribution of $Z_{n,d}^{(k)}$ is formally given by
\begin{equation}\label{eq:DistributionTypicalKface}
\begin{split}
\bP(Z_{n,d}^{(k)}\in A) &= \int_{\GG_s(d,d-1)^n}{1\over C(n,d,k)}\sum_{F\in\cF_k(S_1,\ldots,S_n)}{\bf 1}\{F\in A\}\,\kappa^n(\dint(S_1,\ldots,S_n)),
\end{split}
\end{equation}
where $A\in\cB(\PP_s(d,k))$. We also use the convention to drop the upper index if $k=d$, that is, we write $Z_{n,d}$ instead of $Z_{n,d}^{(d)}$ for the typical cell of $T_{n,d}$.

\begin{remark}
Taking $k=d$ in the previous definition we get back the spherical random polytope whose conical version was studied in \cite{HugSchneiderConicalTessellations,KabluchkoTemesvariThaeleCones} under the name  Sch\"afli random cone.
\end{remark}

To introduce the {weighted typical spherical $k$-face} of $T_{n,d}$ one considers the $k$-skeleton ${\rm skel}_k(T_{n,d})$ of $T_{n,d}$, by which we mean the random closed set on $\SS^{d}$ (in the sense of \cite[Chapter 2]{SW}) consisting of the union of all spherical $k$-faces of cells of $T_{n,d}$, that is,
\begin{align}\label{eq:Skeleton}
{\rm skel}_k(T_{n,d})=\bigcup_{P\in T_{n,d}}\bigcup_{F\in\cF_k(P)}F .
\end{align}
By our assumption on $n$, ${\rm skel}_k(T_{n,d})\neq\varnothing$ and it is easy to verify that the non-degeneracy assumption on $\kappa$ implies that almost surely $\cH^k({\rm skel}_k(T_{n,d}))={n\choose d-k}\omega_{k+1}\in(0,\infty)$. We can thus choose a random point $v$ on ${\rm skel}_k(T_{n,d})$ according to the normalized $k$-dimensional Hausdorff measure. This point almost surely lies in the relative interior of a unique spherical $k$-face $F_v=F_v(T_{n,d})$ of $T_{n,d}$. By definition, this is what we mean by the \textit{weighted typical spherical $k$-face} $W_{n,d}^{(k)}$ of $T_{n,d}$, where the term `weight' always refers to the Hausdorff measure $\cH^k$. Its distribution is formally given by
\begin{equation}\label{eq:DistributionWeightedTypicalKface}
\begin{split}
&\bP(W_{n,d}^{(k)}\in A) \\
&\quad= \int_{\GG_s(d,d-1)^n}{1\over\cH^k({\rm skel}_k(T_{n,d}))}\int_{{\rm skel}_k(T_{n,d})}{\bf 1}\{F_v\in A\}\,\cH^k(\dint v)\kappa^n(\dint(S_1,\ldots,S_n)),
\end{split}
\end{equation}
where $A\in\cB(\PP_s(d,k))$. As above, we use the convention to drop the upper index if $k=d$, that is, we write $W_{n,d}$ instead of $W_{n,d}^{(d)}$ for the weighted typical cell of $T_{n,d}$.

\begin{remark}
In the special case $k=d$ we just have ${\rm skel}_{d}(T_{n,d})=\SS^{d}$ and $W_{n,d}^{(d)}$ is thus the almost surely uniquely determined cell of $T_{n,d}$ which contains a uniform random point on the sphere $\SS^{d}$. The conical versions of such cells appeared in \cite[Section 5]{HugSchneiderConicalTessellations} in the isotropic case.
\end{remark}

The notion of typical spherical $k$-faces and weighted typical spherical $k$-faces parallels in spirit the concepts known from the Euclidean case (see \cite{SchneiderWeightedFaces} and \cite{HugSchneiderFacesDirection}). However, a stationary random tessellation in $\RR^d$ has with probability one infinitely many cells, so that Palm distributions need to be used to introduce the corresponding notions. The compactness of the spherical space $\SS^{d}$ allows a much more direct approach, since the number of $k$-faces of $T_{n,d}$ is almost surely finite. In addition, it should be noted that the common stationarity assumption in the Euclidean case would translate naturally into an isotropy assumption for random tessellations on $\SS^{d}$, see \cite{ArbeiterZaehle,MilesSphere}. In our set-up we work, however, with a general directional distribution, which is is not compatible with the invariance assumption required for Palm calculus. This is the reason why we do not work with Palm distributions for random measures on the sphere in order to define $Z_{n,d}^{(k)}$ and $W_{n,d}^{(k)}$.

\section{Probabilistic interpretation of typical spherical faces}\label{sec:ProbInterpretation}

\subsection{Interpretation of $W_{n,d}^{(k)}$ and $Z_{n,d}^{(k)}$ via intersections}

In this section we consider a great hypersphere tessellation $T_{n,d}$ of $\SS^{d}$ of intensity $n\geq d-k$ and with non-degenerate directional distribution $\kappa$, which is driven by a binomial point process $\xi_n$ on $\GG_s(d,d-1)$ with intensity measure $n\kappa$. By $Z_{n,d}^{(k)}$ and $W_{n,d}^{(k)}$ we denote the typical and the weighted typical spherical $k$-face of $T_{n,d}$, respectively, where $k\in\{0,1,\ldots,d\}$ is a fixed dimension parameter.

Our first result is a description of the weighted typical spherical $k$-face $W_{n,d}^{(k)}$ as the intersection of the weighted typical cell of $T_{n-d+k,d}$ with a $k$-dimensional random great subsphere. This can be seen as  the spherical analogue of \cite[Theorem 1]{SchneiderWeightedFaces}. Below, we also derive a similar description for the typical spherical $k$-face as well. Before we present the result, we introduce for $k\in\{0,1,\ldots,d-1\}$ the measure $\kappa_k$ on $\GG_s(d,k)$ by putting
\begin{equation}\label{eq:Kappak}
\kappa_k(C) = \int_{\GG_s(d,d-1)^{d-k}}{\bf 1}\{S_1\cap\ldots\cap S_{d-k}\in C\}\,\kappa^{d-k}(\dint(S_1,\ldots,S_{d-k}))
\end{equation}
for sets $C\in\cB(\GG_s(d,k))$. It can be interpreted as the directional distribution of the $k$-th intersection process of $\xi_n$, which arises by intersecting any $d-k$ great hyperspheres from $\xi_n$. Clearly, $\kappa_{d-1}=\kappa$.

\begin{theorem}\label{thm:WeightFaceIntersection}
	Let $d\geq 1$, $k\in\{0,1,\ldots,d\}$ and consider a great hypersphere tessellation of $\SS^d$ with non-degenerate directional distribution $\kappa$ and intensity $n\geq d-k$.
	Let $h:\KK_s(d)\to\RR$ be a non-negative measurable function. Then
	\begin{align*}
	\bE h(W_{n,d}^{(k)}) &= {1\over\omega_{k+1}}\int_{\GG_s(d,k)}\int_S\bE h(Z_v(T_{n-d+k,d})\cap S)\,\cH^k(\dint v)\kappa_k(\dint S),
	\end{align*}
	where $Z_v(T_{n-d+k,d})$ stands for the almost surely unique cell of $T_{n-d+k,d}$ containing $v$ in its relative interior.
\end{theorem}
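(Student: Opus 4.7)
The plan is to start from the defining formula \eqref{eq:DistributionWeightedTypicalKface} and decompose the $k$-skeleton into the pieces obtained as intersections of $d-k$ of the great hyperspheres, then exploit symmetry and conditional independence.

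First I would use that $\cH^k({\rm skel}_k(T_{n,d})) = \binom{n}{d-k}\omega_{k+1}$ almost surely (as noted before \eqref{eq:DistributionWeightedTypicalKface}), so the normalisation can be pulled out of the inner integral. Next, because $\kappa$ is non-degenerate, the skeleton decomposes as
\begin{equation*}
{\rm skel}_k(T_{n,d}) = \bigcup_{I\subset\{1,\ldots,n\},\,|I|=d-k} S_I, \qquad S_I := \bigcap_{i\in I}S_i\in\GG_s(d,k),
\end{equation*}
where pairwise intersections of distinct $S_I$ have Hausdorff dimension strictly less than $k$ and therefore carry no $\cH^k$-mass. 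For $\cH^k$-a.e.\ $v\in S_I$ the unique $k$-face of $T_{n,d}$ containing $v$ in its relative interior is
\begin{equation*}
F_v = S_I \cap Z_v\bigl(T_d(S_j:j\notin I)\bigr),
\end{equation*}
because the only hyperspheres of $\xi_n$ that cut $S_I$ non-trivially are the remaining $n-(d-k)$ ones, and together they subdivide $S_I$ into the relatively open $k$-faces of $T_{n,d}$ lying in $S_I$.

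With this observation, the integral against $\cH^k$ over the skeleton turns into the sum over $I$ of integrals over $S_I$. By exchangeability of $S_1,\ldots,S_n$ all $\binom{n}{d-k}$ summands have the same distribution, so the sum collapses to $\binom{n}{d-k}$ times the term for $I=\{1,\ldots,d-k\}$. This factor cancels the combinatorial factor in the normalisation and leaves
\begin{equation*}
\bE h(W_{n,d}^{(k)}) = \frac{1}{\omega_{k+1}}\,\bE\int_{S_1\cap\cdots\cap S_{d-k}} h\bigl((S_1\cap\cdots\cap S_{d-k})\cap Z_v(T_d(S_{d-k+1},\ldots,S_n))\bigr)\,\cH^k(\dint v).
\end{equation*}
Now I would condition on $(S_1,\ldots,S_{d-k})$: given these, the tessellation $T_d(S_{d-k+1},\ldots,S_n)$ is independent and distributed as $T_{n-d+k,d}$. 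Integrating out the conditioning and using the very definition \eqref{eq:Kappak} of $\kappa_k$ to rewrite the law of $S_1\cap\cdots\cap S_{d-k}$ as the measure $\kappa_k$ on $\GG_s(d,k)$ delivers the claimed identity; Fubini for the joint integration in $v$ and $S$ is justified by the non-negativity of $h$.

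The main obstacle is the measure-theoretic justification of the identification $F_v = S \cap Z_v(T_{n-d+k,d})$ on a set of full $\cH^k$-measure in ${\rm skel}_k(T_{n,d})$, i.e.\ ensuring that the bad set of points lying on more than one $S_I$ (equivalently, on the intersection of more than $d-k$ hyperspheres) is $\cH^k$-negligible. This follows from the general-position property recorded after the definition of $\xi_n$, since such intersections belong almost surely to $\GG_s(d,j)$ with $j\le k-1$, but it is the step that must be spelled out carefully before symmetry and conditioning can be invoked.
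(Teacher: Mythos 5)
Your proposal is correct and follows essentially the same route as the paper's proof: decomposing the $k$-skeleton into the $\binom{n}{d-k}$ great subspheres $S_{i_1}\cap\ldots\cap S_{i_{d-k}}$, invoking exchangeability to reduce to the single index set $\{1,\ldots,d-k\}$, identifying $F_v$ with $Z_v(T_d(S_{d-k+1},\ldots,S_n))\cap(S_1\cap\ldots\cap S_{d-k})$, and absorbing the outer integration into $\kappa_k$ via its definition \eqref{eq:Kappak}. The only difference is that you make explicit the $\cH^k$-negligibility of the overlap set, a point the paper's proof passes over silently.
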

\begin{proof}
We start by recalling the definition \eqref{eq:Skeleton} of the $k$-skeleton of $T_{n,d}$. By construction of $T_{n,d}$ we have that
\begin{align}\label{eq:Skeleton2}
{\rm skel}_k(T_{n,d})= \bigcup_{1\leq i_1<\ldots<i_{d-k}\leq n}S_{i_1}\cap\ldots\cap S_{i_{d-k}}.
\end{align}
Since all great hyperspheres $S_1,\ldots,S_n$ generating $T_{n,d}$ are identically distributed and almost surely in general position, this together with the definition \eqref{eq:DistributionWeightedTypicalKface} of $W_{n,d}^{(k)}$ yields
\begin{align*}
&\bE h(W_{n,d}^{(k)})\\
&= \int_{\GG_s(d,d-1)^n}{1\over\cH^k({\rm skel}_k(T_{n,d}))}\int_{{\rm skel}_k(T_{n,d})}h(F_v(T_{n,d}))\,\cH^k(\dint v)\kappa^n(\dint(S_1,\ldots,S_n))\\
&= \int_{\GG_s(d,d-1)^n} {1\over\sum\limits_{1\leq i_1<\ldots<i_{d-k}\leq n}\cH^k(S_{i_1}\cap\ldots\cap S_{i_{d-k}})}\\
&\hspace{1cm}\times\sum_{1\leq i_1<\ldots<i_{d-k}\leq n}\int_{S_{i_1}\cap\ldots\cap S_{i_{d-k}}} h(F_v(T_{n,d}))\,\cH^k(\dint v)\kappa^n(\dint(S_1,\ldots,S_n))\\
&=\int_{\GG_s(d,d-1)^{d-k}}{1\over\omega_{k+1}}\int_{S_1\cap\ldots\cap S_{d-k}}\int_{\GG_s(d,d-1)^{n-d+k}}h(F_v(T_{n,d}))\\
&\hspace{1cm}\times\kappa^{n-d+k}(\dint(S_{d-k+1},\ldots,S_n))\cH^k(\dint v)\kappa^{d-k}(\dint(S_1,\ldots,S_{d-k}))\\
&=\int_{\GG_s(d,d-1)^{d-k}}{1\over\omega_{k+1}}\int_{S_1\cap\ldots\cap S_{d-k}}\int_{\GG_s(d,d-1)^{n-d+k}}h(F_v(T_{n-d+k,d}\cap(S_1\cap\ldots\cap S_{d-k})))\\
&\hspace{1cm}\times\kappa^{n-d+k}(\dint(S_{d-k+1},\ldots,S_n))\cH^k(\dint v)\kappa^{d-k}(\dint(S_1,\ldots,S_{d-k})),
\end{align*}
where $T_{n-d+k,d}=T_d(S_{d-k+1},\ldots,S_n)$ is the tessellation of $\SS^d$ generated by the $n-d+k$ great hyperspheres $S_{d-k+1},\ldots,S_n$. Applying now the definition of the measure $\kappa_k$ yields
\begin{align*}
\bE h(W_{n,d}^{(k)}) &= {1\over\omega_{k+1}}\int_{\GG_s(d,k)}\int_S \bE h(F_v(T_{n-d+k,d}\cap S)\,\cH^k(\dint v)\kappa_k(\dint S).
\end{align*}
The result follows now by observing that $F_v(T_{n-d+k,d}\cap S)=Z_v(T_{n-d+k,d})\cap S$.
\end{proof}

\begin{figure}
	\centering
	\begin{tikzpicture}[]
	\pgftext{\includegraphics[width=250pt]{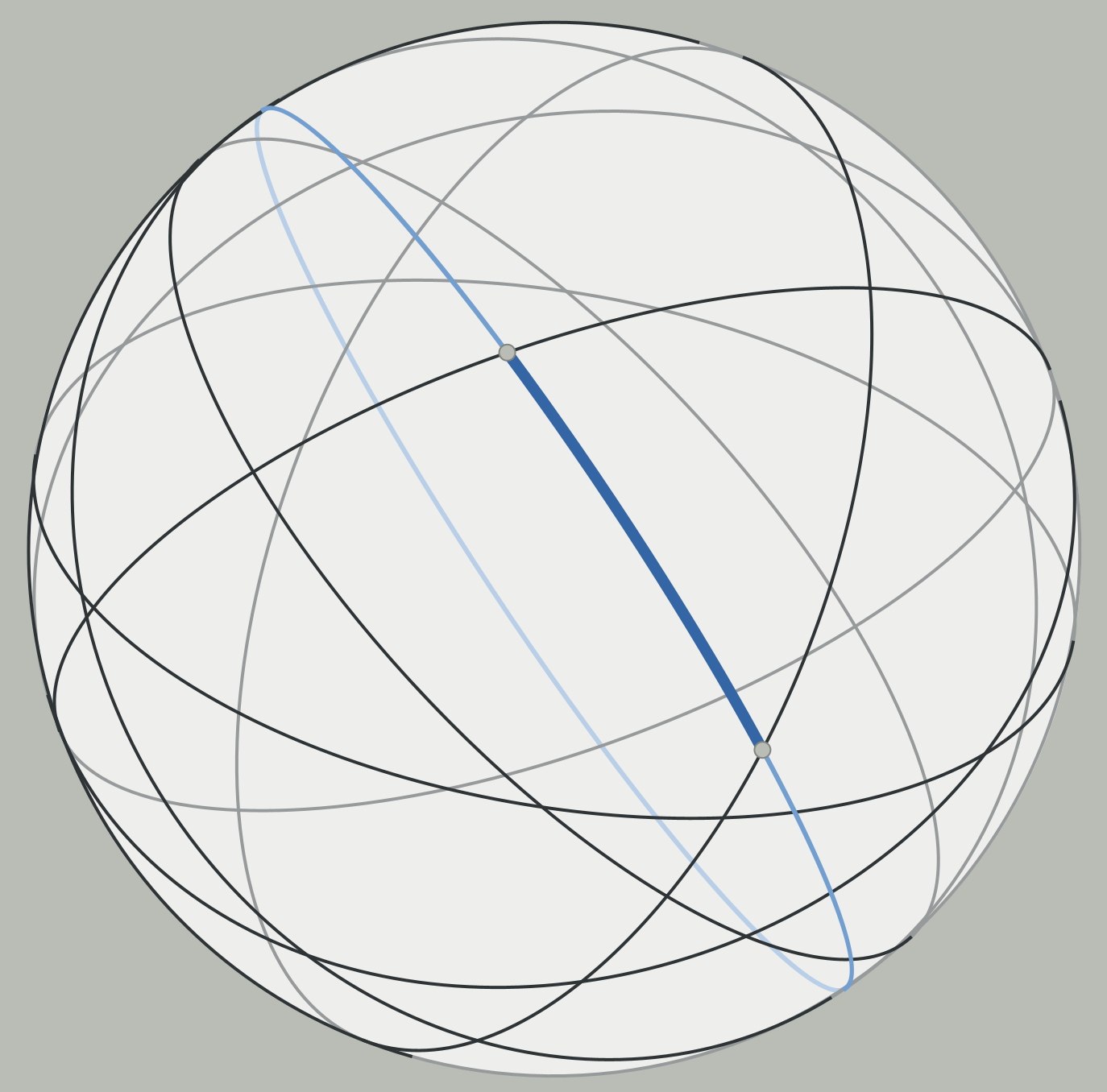}} at (0pt,0pt);
	\node at (-3.5,3.6) {$T_{n-d+k,d}$};
	\node at (0,0) {$S$};
	\end{tikzpicture}
	\caption{Construction of $Z_{n,d}^{(k)}$ or $W_{n,d}^{(k)}$ by intersection of $T_{n-d+k,d}$ with a random great subsphere $S$.}
\end{figure}

We also note the following corollary in the isotropic case.  To present it, we define the $k$-dimensional great subsphere $E_k=\SS^{d}\cap\EE_k$, where $\EE_k$ is the $(k+1)$-dimensional linear subspace spanned by the last $k+1$ vectors of the standard orthonormal basis of $\RR^{d+1}$. Moreover, we let $e=(0,\ldots,0,1)$ be the north pole of $\SS^{d}$. Clearly, $e\in E_k$.

\begin{corollary}\label{cor:CellSection}
	Let $d\geq 1$, $k\in\{0,1,\ldots,d\}$ and consider an isotropic great hypersphere tessellation of $\SS^d$ with intensity $n\geq d-k$. Let $h:\KK_s(d)\to\RR$ be a rotation invariant, non-negative measurable function. Then
	$$
	\bE h(W_{n,d}^{(k)}) = \bE h(Z_e(T_{n-d+k,d})\cap E_k) = \bE h(W_{n-d+k,k}).
	$$
\end{corollary}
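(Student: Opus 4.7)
The plan is to specialize Theorem \ref{thm:WeightFaceIntersection} to the isotropic case. When $\kappa = \nu_{d-1}$, the measure $\kappa_k$ defined in \eqref{eq:Kappak} is the pushforward of the rotation invariant product measure $\kappa^{d-k}$ under the rotation equivariant intersection map $(S_1,\ldots,S_{d-k}) \mapsto S_1 \cap \cdots \cap S_{d-k}$, and is therefore itself rotation invariant. By uniqueness of the Haar probability measure on $\GG_s(d,k)$, this forces $\kappa_k = \nu_k$.

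Substituting $\kappa_k = \nu_k$ into Theorem \ref{thm:WeightFaceIntersection}, I would argue that the integrand $\bE h(Z_v(T_{n-d+k,d}) \cap S)$ is constant on the set of incident pairs $\{(v,S) \in \SS^d \times \GG_s(d,k) : v \in S\}$. Indeed, $\SO(d+1)$ acts transitively on this set, the law of $T_{n-d+k,d}$ is rotation invariant, and $h$ is rotation invariant by hypothesis. Fixing the pair $(e, E_k)$ and using $\cH^k(S) = \omega_{k+1}$ together with $\nu_k(\GG_s(d,k)) = 1$, the double integral collapses to $\bE h(Z_e(T_{n-d+k,d}) \cap E_k)$, yielding the first equality.

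For the second equality, the plan is to identify the restriction $T_{n-d+k,d} \cap E_k$ with an isotropic great hypersphere tessellation of $E_k \cong \SS^k$ of intensity $n-d+k$. Each $S_i$ almost surely meets $E_k$ in a great $(k-1)$-subsphere of $E_k$, and by rotation invariance within $E_k$ the collection of these intersections is i.i.d.\ and uniform on $\GG_s(k,k-1)$. The cell of the induced tessellation containing the fixed point $e \in E_k$ then has, by averaging over the subgroup of $\SO(d+1)$ stabilizing $E_k$, the same law as the cell containing a uniformly distributed random point in $E_k$; the latter is precisely $W_{n-d+k,k}$ by the remark following \eqref{eq:DistributionWeightedTypicalKface} applied on $\SS^k$. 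The main obstacle is really just the transfer of rotation invariance from $\SS^d$ to the subsphere $E_k$ (and the attendant identification $\GG_s(k,k-1) \hookrightarrow \GG_s(d,d-1)$ via intersection with $E_k$); once this is in hand, the rest of the argument is a routine bookkeeping exercise.
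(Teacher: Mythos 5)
Your proposal is correct and follows essentially the same route as the paper: specialize Theorem \ref{thm:WeightFaceIntersection} and use rotation invariance of $T_{n-d+k,d}$, of $\kappa_k$ and of $h$ to collapse the double integral onto the fixed incident pair $(e,E_k)$ (the paper does this by explicitly averaging over the rotation cosets $\Theta_v$ and $\Theta_S$, you by transitivity of $\SO(d+1)$ on incident pairs, which is the same argument). Your treatment of the second equality, identifying $T_{n-d+k,d}\cap E_k$ with an isotropic tessellation of $\SS^k$ and the north-pole cell with $W_{n-d+k,k}$, correctly fills in a step the paper leaves implicit.
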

\begin{proof}
	For any $v\in\SS^d$ we let
	$$
	\Theta_v=\{\varrho\in\SO(d+1):\varrho e=v\}
	$$
	be the set of rotations that rotate $e$ to $v$. The set $\Theta_{e}$ naturally has the structure of a compact group and we let $\nu_{e}$ be the rotation invariant Haar probability measure on $\Theta_{e}$. For general $v\in\SS^d$ we let $\nu_v=\nu_{e}\circ\varrho^{-1}$ be a probability measure on the co-set $\Theta_v$, where $\varrho\in\Theta_v$ is an arbitrary element (the resulting measure can be shown to be independent of the choice of $\varrho$). Moreover, for $S\in\GG_s(d,k)$ we let
	$$
	\Theta_S=\{\varrho\in\SO(d+1):\varrho E_k=S\}
	$$
	be the set of rotations that rotate $E_k$ to $S$. Similarly as above, we define the invariant probability measure $\nu_S$ on $\Theta_S$.
	
	Using Theorem \ref{thm:WeightFaceIntersection}, the rotation invariance of $T_{n-d+k,d}$, $\kappa$ (which carries over to $\kappa_k$) and $h$ we see that
	\begin{align*}
	\bE h(W_{n,d}^{(k)}) &= {1\over\omega_{k+1}}\int_{\GG_s(d,k)}\int_S\bE h(Z_v(T_{n-d+k,d})\cap S)\,\cH^k(\dint v)\kappa_k(\dint S)\\
	&={1\over\omega_{k+1}}\int_{\GG_s(d,k)}\int_{S}\int_{\Theta_{v}}\bE h(Z_{\varrho^{-1}v}(\varrho^{-1}T_{n-d+k,d})\cap \varrho^{-1}S)\,\nu_v(\dint\varrho)\cH^k(\dint v)\kappa_k(\dint S)\\
	&={1\over\omega_{k+1}}\int_{\GG_s(d,k)}\int_S\int_{\Theta_{v}}\int_{\Theta_{\varrho^{-1}S}}\bE h(Z_e(T_{n-d+k,d})\cap \varsigma^{-1}\varrho^{-1}S)\\
	&\hspace{5cm}\times\nu_{\varrho^{-1}S}(\dint\varsigma)\nu_v(\dint\varrho)\cH^k(\dint v)\kappa_k(\dint S)\\
	&={1\over\omega_{k+1}}\int_{\GG_s(d,k)}\int_S\int_{\Theta_{v}}\int_{\Theta_{\varrho^{-1}S}}\bE h(Z_e(T_{n-d+k,d})\cap E_k)\\
	&\hspace{5cm}\times\nu_{\varrho^{-1}S}(\dint\varsigma)\nu_v(\dint\varrho)\cH^k(\dint v)\kappa_k(\dint S)\\
	&=\bE h(Z_e(T_{n-d+k,d})\cap E_k).
	\end{align*}
	This proves the claim.
\end{proof}

Next, we derive the analogue of Theorem \ref{thm:WeightFaceIntersection} for the typical spherical $k$-face $Z_{n,d}^{(k)}$ of $T_{n,d}$. The result is, however, slightly different, since the typical cell of $T_{n-d+k,d}$ is not necessarily hit by an independent random great subsphere $S\in\GG_s(d,k)$ with distribution $\kappa_k$. Instead, we have to consider the typical cell of the spherical sectional tessellation $T_{n-d+k,S}=T_{n-d+k,d}\cap S$ within the great subsphere $S$.

\begin{theorem}\label{thm:TypicalIntersection}
	Let $d\geq 1$, $k\in\{0,1,\ldots,d\}$ and consider a great hypersphere tessellation of $\SS^d$ with non-degenerate directional distribution $\kappa$ and intensity $n\geq d-k$.
	Let $h:\KK_s(d)\to\RR$ be a non-negative measurable function. Then
	$$
	\bE h(Z_{n,d}^{(k)}) = \int_{\GG_s(d,k)}\bE h(Z_{n-d+k,S})\,\kappa_k(\dint S),
	$$
	where $Z_{n-d+k,S}$	stands for the typical cell of $T_{n-d+k,S}$.
\end{theorem}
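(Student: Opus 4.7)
My plan is to mimic the strategy used in the proof of Theorem \ref{thm:WeightFaceIntersection}, decomposing the set $\cF_k(T_{n,d})$ of all spherical $k$-faces according to which $(d-k)$-fold intersection of great hyperspheres contains a given face, then using the exchangeability of $S_1,\ldots,S_n$ under $\kappa^n$ and the definition \eqref{eq:Kappak} of $\kappa_k$.

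\textbf{Step 1 (decomposition of $\cF_k(T_{n,d})$).} Since $\kappa$ is non-degenerate, every spherical $k$-face of $T_{n,d}$ lies in exactly one intersection of the form $S_{i_1}\cap\ldots\cap S_{i_{d-k}}$ with $1\leq i_1<\ldots<i_{d-k}\leq n$, and the $k$-faces lying on this random great $k$-subsphere are precisely the cells of the sectional tessellation $T_{n-d+k, S_{i_1}\cap\ldots\cap S_{i_{d-k}}}$ cut out inside $S_{i_1}\cap\ldots\cap S_{i_{d-k}}$ by the remaining $n-d+k$ great hyperspheres. Hence
$$
\sum_{F\in\cF_k(S_1,\ldots,S_n)} h(F) \;=\; \sum_{1\leq i_1<\ldots<i_{d-k}\leq n}\ \sum_{P\in T_{n-d+k,\,S_{i_1}\cap\ldots\cap S_{i_{d-k}}}} h(P)\qquad\text{almost surely.}
$$

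\textbf{Step 2 (exchangeability and normalisation).} Substituting this into the defining identity \eqref{eq:DistributionTypicalKface} of $Z_{n,d}^{(k)}$, the symmetry of $\kappa^n$ collapses the ${n\choose d-k}$ summands to the single one with $(i_1,\ldots,i_{d-k})=(1,\ldots,d-k)$ at the cost of a combinatorial factor ${n\choose d-k}$. Combining with $C(n,d,k)={n\choose d-k}\,C(n-d+k,k)$ from \eqref{eq:Cndk} leaves an overall prefactor $1/C(n-d+k,k)$. Conditionally on $S_1,\ldots,S_{d-k}$, which determine a random great $k$-subsphere $S:=S_1\cap\ldots\cap S_{d-k}$, the inner integral against $\kappa^{n-d+k}$ over $(S_{d-k+1},\ldots,S_n)$ is exactly the defining expression for $\bE h(Z_{n-d+k,S})$ applied inside $S$, where $T_{n-d+k,S}=T_{n-d+k,d}\cap S$ is a great hypersphere tessellation of the $k$-dimensional great subsphere $S$.

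\textbf{Step 3 (change of variables via $\kappa_k$).} What remains is
$$
\int_{\GG_s(d,d-1)^{d-k}}\bE h(Z_{n-d+k,\,S_1\cap\ldots\cap S_{d-k}})\,\kappa^{d-k}(\dint(S_1,\ldots,S_{d-k})),
$$
and the definition \eqref{eq:Kappak} of $\kappa_k$ rewrites this as the desired integral over $\GG_s(d,k)$ against $\kappa_k$, completing the proof.

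The only genuine subtlety is to identify correctly what the typical cell on the right-hand side refers to: it is the typical cell of the $k$-dimensional great hypersphere tessellation induced on $S$ by the $n-d+k$ random $(k-1)$-dimensional great subspheres $S_{d-k+1}\cap S,\ldots,S_n\cap S$ of $S$, and not a cell of $T_{n-d+k,d}$ intersected with $S$. In contrast to Theorem \ref{thm:WeightFaceIntersection}, no Hausdorff-measure weighting on $S$ is required, which explains the absence of the factor $\omega_{k+1}^{-1}\int_S(\cdot)\,\cH^k(\dint v)$.
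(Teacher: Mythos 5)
Your proposal is correct and follows essentially the same route as the paper's proof: decompose $\cF_k(T_{n,d})$ over the $\binom{n}{d-k}$ intersections $S_{i_1}\cap\ldots\cap S_{i_{d-k}}$, use exchangeability and the identity $C(n,d,k)=\binom{n}{d-k}C(n-d+k,k)$ to reduce the prefactor to $1/C(n-d+k,k)$, recognise the inner integral as the definition of the typical cell of the sectional tessellation $T_{n-d+k,S}$, and convert the outer integral via the definition of $\kappa_k$. Your closing remark about the correct interpretation of $Z_{n-d+k,S}$ matches the caveat the paper itself makes before stating the theorem.
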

\begin{proof}
Using the definition \eqref{eq:DistributionTypicalKface} of $Z_{n,d}^{(k)}$ and the construction of $T_{n,d}$, we see that
\begin{align*}
 \bE h(Z_{n,d}^{(k)}) &= \int_{\GG_s(d,d-1)^n}{1\over C(n,d,k)}\sum_{F\in\cF_k(S_1,\ldots,S_n)}h(F)\,\kappa^n(\dint(S_1,\ldots,S_n))\\
 &= \int_{\GG_s(d,d-1)^n}{1\over C(n,d,k)}\sum_{1\leq i_1<\ldots<i_{d-k}\leq n}\sum_{F\in\cF_k(S_1,\ldots,S_n)\atop F\subseteq S_{i_1}\cap\ldots\cap S_{i_{d-k}}}h(F)\,\kappa^n(\dint(S_1,\ldots,S_n)).
\end{align*}
Since the great hyperspheres $S_1,\ldots,S_n$ generating $T_{n,d}$ are identically distributed, we obtain
\begin{align*}
 \bE h(Z_{n,d}^{(k)}) &= \int_{\GG_s(d,d-1)^{d-k}}{{n\choose d-k}\over C(n,d,k)}\int_{\GG_s(d,d-1)^{n-d+k}}\sum_{F\in\cF_k(T_{n-d+k,S_1\cap\ldots\cap S_{d-k}})}h(F)\\
 &\qquad\qquad\times\kappa^{n-d+k}(\dint(S_{d-k+1},\ldots,S_n))\kappa^{d-k}(\dint(S_1,\ldots,S_{d-k})),
\end{align*}
where
$$
T_{n-d+k,S_1\cap\ldots\cap S_{d-k}}=T_d(S_{d-k+1},\ldots,S_n)\cap(S_1\cap\ldots\cap S_{d-k})
$$
is the spherical sectional random tessellation within $S_1\cap\ldots\cap S_{d-k}$ arising by intersecting $S_{1}\cap\ldots\cap S_{{d-k}}$ with the $n-d+k$ independent great hyperspheres $S_{d-k+1},\ldots,S_n$. Recalling \eqref{eq:Cndk} we see that
$$
{{n\choose d-k}\over C(n,d,k)} = {1\over C(n-d+k,k)}.
$$
Moreover, $C(n-d+k,k)$ is almost surely the number of cells of $T_{n-d+k,S_1\cap\ldots\cap S_{d-k}}$, since $\kappa$ is non-degenerate. So, using once again the definition \eqref{eq:DistributionTypicalKface}, but this time applied to $T_{n-d+k,S_1\cap\ldots\cap S_{d-k}}$, as well as the definition of the measure $\kappa_k$, this leads to
\begin{align*}
 \bE h(Z_{n,d}^{(k)}) &=\int_{\GG_s(d,k)}{1\over C(n-d+k,k)}\int_{\GG_s(d,d-1)^{n-d+k}}\sum_{F\in\cF_k(T_{n-d+k,S})}h(F)\\
 &\qquad\qquad\times\kappa^{n-d+k}(\dint(S_{d-k+1},\ldots,S_n))\kappa_k(\dint S)\\
 &= \int_{\GG_s(d,k)}\bE h(Z_{n-d+k,S})\,\kappa_k(\dint S).
\end{align*}
The proof is thus complete.
\end{proof}

In the isotropic case we have the following corollary, which is the analogue of Corollary \ref{cor:CellSection} for the typical spherical $k$-face. For this, recall the definition of the great subsphere $E_k$.

\begin{corollary}\label{cor:TypicalIso}
	Let $d\geq 1$, $k\in\{0,1,\ldots,d\}$ and consider an isotropic great hypersphere tessellation of $\SS^d$ with intensity $n\geq d-k$. Let $h:\KK_s(d)\to\RR$ be  rotation invariant, non-negative measurable function. Then
	$$
	\bE h(Z_{n,d}^{(k)}) = \bE h(Z_{n-d+k,k}),
	$$
	where $Z_{n-d+k,k}=Z_{n-d+k,E_k}$.
\end{corollary}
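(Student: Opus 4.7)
The plan is to start from Theorem \ref{thm:TypicalIntersection} and exploit isotropy to collapse the outer integral. In the isotropic case, $\kappa = \nu_{d-1}$ is rotation invariant, so from the definition \eqref{eq:Kappak} the measure $\kappa_k$ inherits rotation invariance on $\GG_s(d,k)$; since $\nu_k$ is the unique rotation invariant probability measure, this forces $\kappa_k = \nu_k$. It therefore suffices to show that the integrand $S\mapsto \bE h(Z_{n-d+k,S})$ is constant on $\GG_s(d,k)$ and equals $\bE h(Z_{n-d+k,k})$.

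Next, fix $S\in\GG_s(d,k)$ and choose some $\varrho\in\SO(d+1)$ with $\varrho E_k = S$. The sectional tessellation $T_{n-d+k,S}$ is, by construction, generated inside $S$ by the $n-d+k$ intersections $\tilde S_j\cap S$ of independent $\nu_{d-1}$-distributed great hyperspheres $\tilde S_j$ of $\SS^d$. Because $\nu_{d-1}$ is invariant under the stabilizer subgroup $\{\sigma\in\SO(d+1):\sigma S = S\}$, which acts transitively on great hyperspheres of the subsphere $S$, the induced distribution of each $\tilde S_j\cap S$ is the unique rotation invariant probability measure on the Grassmannian of great hyperspheres of $S$. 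Pulling back via $\varrho^{-1}$ then yields
$$
\varrho^{-1}T_{n-d+k,S} \eqdistr T_{n-d+k,k},
$$
i.e.\ an isotropic great hypersphere tessellation of $E_k\cong\SS^k$ of intensity $n-d+k$. Hence $\varrho^{-1}Z_{n-d+k,S}\eqdistr Z_{n-d+k,k}$, and by rotation invariance of $h$ we conclude
$$
\bE h(Z_{n-d+k,S}) = \bE h(\varrho^{-1}Z_{n-d+k,S}) = \bE h(Z_{n-d+k,k}).
$$

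Plugging this constant value into Theorem \ref{thm:TypicalIntersection} and using that $\kappa_k$ is a probability measure on $\GG_s(d,k)$ gives the claim. The only delicate point is the second step: one must verify that the sectional directional distribution on the subsphere $S$ induced by intersecting $\nu_{d-1}$-distributed great hyperspheres of $\SS^d$ with $S$ is genuinely the uniform distribution on the Grassmannian of great hyperspheres of $S$. This follows from the Haar invariance argument above, but is the content of the argument that actually uses isotropy — everything else is bookkeeping.
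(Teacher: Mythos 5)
Your argument is correct and follows essentially the same route as the paper: start from Theorem \ref{thm:TypicalIntersection} and use isotropy to show that $S\mapsto \bE h(Z_{n-d+k,S})$ is constant, so the $\kappa_k$-integral collapses. The only cosmetic difference is that the paper obtains the constancy by averaging over rotations $\varrho$ with $\varrho E_k=S$ and using invariance of the ambient tessellation $T_{n-d+k,d}$, whereas you argue intrinsically inside $S$ via the induced sectional directional distribution (and you additionally note $\kappa_k=\nu_k$, which is true but not needed); both are valid.
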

\begin{proof}
Using the same notation as in the proof of Corollary \ref{cor:CellSection} we conclude from Theorem \ref{thm:TypicalIntersection} and the assumed isotropy that
\begin{align*}
\bE h(Z_{n,d}^{(k)}) &= \int_{\GG_s(d,k)}\bE h(Z_{n-d+k,S})\,\kappa_k(\dint S)\\
&=\int_{\GG_s(d,k)}\int_{\Theta_S}\bE h(Z_{n-d+k,\varrho^{-1}S})\,\nu_S(\dint\varrho)\kappa_k(\dint S)\\
&= \int_{\GG_s(d,k)}\int_{\Theta_S}\bE h(Z_{n-d+k,E_k})\,\nu_S(\dint\varrho)\kappa_k(\dint S)\\
&=\bE h(Z_{n-d+k,E_k}).
\end{align*}
This completes the argument.
\end{proof}

\subsection{Interpretation of $W_{n,d}^{(k)}$ via size biasing}

Our next result describes the relation between $Z_{n,d}^{(k)}$ and $W_{n,d}^{(k)}$. It shows that the weighted typical spherical $k$-face $W_{n,d}^{(k)}$ is the size biased version of the typical spherical $k$-face $Z_{n,d}^{(k)}$, where size is measured by the $k$-dimensional Hausdorff measure. This can be regarded as the spherical analogue of \cite[Equation (10)]{SchneiderWeightedFaces}, which in turn generalizes \cite[Theorem 10.4.1]{SW}.

\begin{theorem}\label{thm:WeightedvsTypical}
Let $d\geq 1$, $k\in\{0,1,\ldots,d\}$ and consider a great hypersphere tessellation of $\SS^d$ with non-degenerate directional distribution $\kappa$ and intensity $n\geq d-k$.
For non-negative measurable functions $h:\KK_s(d)\to\RR$ one has that
$$
\bE h(W_{n,d}^{(k)}) = {1\over\bE\cH^k(Z_{n,d}^{(k)})}\bE[h(Z_{n,d}^{(k)})\cH^k(Z_{n,d}^{(k)})].
$$
\end{theorem}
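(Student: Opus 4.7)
My plan is to start from the defining formula \eqref{eq:DistributionWeightedTypicalKface} for $W_{n,d}^{(k)}$ and recognize that the inner integration over the $k$-skeleton can be rewritten as a weighted sum over the $k$-faces. Concretely, since the map $v\mapsto F_v$ is constant on the relative interior of each $F\in\cF_k(T_{n,d})$ and these relative interiors cover ${\rm skel}_k(T_{n,d})$ up to a $\cH^k$-null set, one has
\begin{equation*}
\int_{{\rm skel}_k(T_{n,d})} h(F_v)\,\cH^k(\dint v) \;=\; \sum_{F\in\cF_k(T_{n,d})} h(F)\,\cH^k(F).
\end{equation*}
Crucially, the normalising factor $\cH^k({\rm skel}_k(T_{n,d}))$ is (as noted in the paragraph preceding \eqref{eq:DistributionWeightedTypicalKface}) almost surely equal to the deterministic constant $\binom{n}{d-k}\omega_{k+1}$, since the skeleton is the union of $\binom{n}{d-k}$ $k$-dimensional great subspheres intersecting pairwise in lower-dimensional sets. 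This deterministic character is the one feature I would make sure to highlight, because it is what allows the normalisation to come outside the integral.

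Pulling the constant out yields
\begin{equation*}
\bE h(W_{n,d}^{(k)}) \;=\; \frac{1}{\binom{n}{d-k}\omega_{k+1}} \int_{\GG_s(d,d-1)^n} \sum_{F\in\cF_k(S_1,\ldots,S_n)} h(F)\,\cH^k(F)\,\kappa^n(\dint(S_1,\ldots,S_n)).
\end{equation*}
Now I would apply the definition \eqref{eq:DistributionTypicalKface} of $Z_{n,d}^{(k)}$ with the test function $F\mapsto h(F)\cH^k(F)$ (which is measurable and non-negative) to identify the integrand as $C(n,d,k)\cdot\bE\bigl[h(Z_{n,d}^{(k)})\cH^k(Z_{n,d}^{(k)})\bigr]$. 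This gives
\begin{equation*}
\bE h(W_{n,d}^{(k)}) \;=\; \frac{C(n,d,k)}{\binom{n}{d-k}\omega_{k+1}}\; \bE\bigl[h(Z_{n,d}^{(k)})\cH^k(Z_{n,d}^{(k)})\bigr].
\end{equation*}

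Finally, to identify the prefactor with $1/\bE\cH^k(Z_{n,d}^{(k)})$, I would specialise to $h\equiv 1$: the left-hand side equals $1$, so
\begin{equation*}
\bE\cH^k(Z_{n,d}^{(k)}) \;=\; \frac{\binom{n}{d-k}\omega_{k+1}}{C(n,d,k)},
\end{equation*}
and substituting this back yields the claimed identity. I do not foresee a genuine obstacle here; the only point needing care is the almost-sure evaluation of $\cH^k({\rm skel}_k(T_{n,d}))$, which hinges on the non-degeneracy of $\kappa$ ensuring general position of the $S_i$, so that the pairwise intersections of the subspheres $S_{i_1}\cap\cdots\cap S_{i_{d-k}}$ entering \eqref{eq:Skeleton2} have dimension strictly less than $k$ and hence vanish under $\cH^k$.
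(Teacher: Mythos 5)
Your argument is correct and is essentially the paper's proof run in the opposite direction: the paper computes $\bE[h(Z_{n,d}^{(k)})\cH^k(Z_{n,d}^{(k)})]$ and unfolds it (via Fubini and the symmetrization over which $d-k$ hyperspheres generate a face) into the skeleton integral defining $\bE h(W_{n,d}^{(k)})$, whereas you collapse the skeleton integral into the face sum directly and recover the normalising constant $\bE\cH^k(Z_{n,d}^{(k)})={n\choose d-k}\omega_{k+1}/C(n,d,k)$ by taking $h\equiv 1$ instead of computing it upfront as the paper does in \eqref{eq:MeasureZk}. The key ingredients --- the almost sure deterministic value of $\cH^k({\rm skel}_k(T_{n,d}))$ under the non-degeneracy of $\kappa$ and the identification of the skeleton integral with $\sum_F h(F)\cH^k(F)$ --- are the same, so this counts as the same proof, slightly streamlined.
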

\begin{proof}
We start by noting that, since $\kappa$ is non-degenerate, every spherical $k$-face of $T_{n,d}$ arises from the intersection of $d-k$ district great hyperspheres with the remaining elements from $\xi_n$. Since $Z_{n,d}^{(k)}$ is uniformly distributed among the $C(n,d,k)$ spherical $k$-faces of $T_{n,d}$, and since the $n$ great hyperspheres are identically distributed this leads to
\begin{align*}
\bE\cH^k(Z_{n,d}^{(k)}) &= {1\over C(n,d,k)}\bE\sum_{F\in\cF_k(T_{n,d})}\cH^k(F)\\
&={{n\choose d-k}\over C(n,d,k)}\int_{\GG_s(d,d-1)^n}\sum_{F\in\cF_k(T_{n-d+k,S_{1}\cap\ldots\cap S_{{d-k}}})}\cH^k(F)\,\kappa^n(\dint(S_1,\ldots,S_n)).
\end{align*}
Since $T_{n-d+k,S_1\cap\ldots\cap S_{d-k}}$ is a tessellation, the sum over $F\in \cF_k(T_{n-d+k,S_1\cap\ldots\cap S_{d-k}})$ of $\cH^k(F)$ is just $\omega_{k+1}$, independently of $S_{1},\ldots, S_{{d-k}}$. Since $\kappa$ is a probability measure, it follows that
\begin{equation}\label{eq:MeasureZk}
\bE\cH^k(Z_{n,d}^{(k)}) = {\omega_{k+1}\over C(n,d,k)}{n\choose d-k},
\end{equation}
independently of $\kappa$.

We continue by using the definition \eqref{eq:DistributionTypicalKface} of $Z_{n,d}^{(k)}$ to see that
\begin{align*}
&\bE[h(Z_{n,d}^{(k)})\cH^k(Z_{n,d}^{(k)})]\\
&\qquad=\int_{\GG_s(d,d-1)^n}{1\over C(n,d,k)}\sum_{F\in\cF_k(S_1,\ldots,S_n)} h(F)\cH^k(F)\,\kappa^n(\dint(S_1,\ldots,S_n)).
\end{align*}
Splitting the integral, arguing as above and using Fubini's theorem, this is equal to
\begin{align*}
&{{n\choose d-k}\over C(n,d,k)}\int_{\GG_s(d,d-1)^{d-k}}\int_{\GG_s(d,d-1)^{n-d+k}}\sum_{F\in\cF_k(T_{n-d+k,S_1\cap\ldots\cap S_{d-k}})}h(F)\cH^k(F)\\
&\qquad\qquad\times\kappa^{n-d+k}(\dint(S_{d-k+1},\ldots,S_n))\kappa^{d-k}(\dint(S_1,\ldots,S_{d-k}))\\
&={{n\choose d-k}\over C(n,d,k)}\int_{\GG_s(d,d-1)^{d-k}}\int_{\GG_s(d,d-1)^{n-d+k}}\sum_{F\in\cF_k(T_{n-d+k,S_1\cap\ldots\cap S_{d-k}})}\int_Fh(F)\\
&\qquad\qquad\times{\bf 1}\{v\in F\}\,\cH^k(\dint v)\kappa^{n-d+k}(\dint(S_{d-k+1},\ldots,S_n))\kappa^{d-k}(\dint(S_1,\ldots,S_{d-k}))\\
&={{n\choose d-k}\over C(n,d,k)}\int_{\GG_s(d,d-1)^{d-k}}\int_{S_1\cap\ldots\cap S_{d-k}}\int_{\GG_s(d,d-1)^{n-d+k}}\sum_{F\in\cF_k(T_{n-d+k,S_1\cap\ldots\cap S_{d-k}})}h(F)\\
&\qquad\qquad\times{\bf 1}\{v\in F\}\,\kappa^{n-d+k}(\dint(S_{d-k+1},\ldots,S_n))\cH^k(\dint v)\kappa^{d-k}(\dint(S_1,\ldots,S_{d-k})),
\end{align*}
where we recall that
$$
T_{n-d+k,S_1\cap\ldots\cap S_{d-k}}=T_d(S_{d-k+1},\ldots,S_n)\cap(S_1\cap\ldots\cap S_{d-k})
$$
is the spherical sectional random tessellation within $S_1\cap\ldots\cap S_{d-k}$ arising by intersecting $S_{1}\cap\ldots\cap S_{{d-k}}$ with the $n-d+k$ independent great hyperspheres $S_{d-k+1},\ldots,S_n$.

Next, we need to observe that for $\kappa^{d-k}$-almost all $(S_1,\ldots,S_{d-k})$ and $\kappa^{n-d+k}$-almost all $(S_{d-k+1},\ldots,S_n)$ we can only have ${\bf 1}\{v\in F\}=1$ for exactly one element from the set $\cF_k(T_{n-d+k,S_1\cap\ldots\cap S_{d-k}})$, which we denote by $F_v=F_v(S_1,\ldots,S_n)$. This yields
\begin{align*}
&\bE[h(Z_{n,d}^{(k)})\cH^k(Z_{n,d}^{(k)})]\\
&={{n\choose d-k}\over C(n,d,k)}\int_{\GG_s(d,d-1)^{d-k}}\int_{S_1\cap\ldots\cap S_{d-k}}\int_{\GG_s(d,d-1)^{n-d+k}}h(F_v)\\
&\qquad\qquad\qquad\times\kappa^{n-d+k}(\dint(S_{d-k+1},\ldots,S_n))\cH^k(\dint v)\kappa^{d-k}(\dint(S_1,\ldots,S_{d-k})).
\end{align*}
Division by $\bE\cH^k(Z_{n,d}^{(k)})$ leads in view of \eqref{eq:MeasureZk} and the representations \eqref{eq:Skeleton} and \eqref{eq:Skeleton2} of the $k$-skeleton ${\rm skel}_k(T_{n,d})$ to
\begin{align*}
&{1\over\bE\cH^k(Z_{n,d}^{(k)})}\bE[h(Z_{n,d}^{(k)})\cH^k(Z_{n,d}^{(k)})]\\
&=\int_{\GG_s(d,d-1)^{d-k}}{1\over\omega_{k+1}}\int_{S_1\cap\ldots\cap S_{d-k}}\int_{\GG_s(d,d-1)^{n-d+k}}h(F_v)\\
&\qquad\qquad\times\kappa^{n-d+k}(\dint(S_{d-k+1},\ldots,S_n))\cH^k(\dint v)\kappa^{d-k}(\dint(S_1,\ldots,S_{d-k}))\\
&=\int_{\GG_s(d,d-1)^n}{1\over\cH^k({\rm skel}_k(T_{n,d}))}\int_{{\rm skel}_k(T_{n,d})}h(F_v)\,\cH^k(\dint v)\kappa^n(\dint(S_1,\ldots,S_n))\\
&=\bE h(W_{n,d}^{(k)}),
\end{align*}
where in the last step we used the definition \eqref{eq:DistributionWeightedTypicalKface} of $W_{n,d}^{(k)}$. This completes the argument.
\end{proof}

\section{Directional distributions and spherical faces with given directions}\label{sec:DirDistr}

In this section we consider the distribution of the direction of the typical and the weighted typical spherical $k$-face of $T_{n,d}$.  While these distributions are fundamentally different in the Euclidean case (see \cite{HugSchneiderFacesDirection}), we will see the new phenomenon that in the spherical case these distributions coincide. The reason behind this behaviour is that the sum of the weights of all spherical $k$-faces of the tessellation lying in a common $k$-dimensional subsphere is some constant, which is independent of the given subsphere. Since this is true for the sum of the constant weights $1$ and also for the sum of the $k$-dimensional Hausdorff measures, the directional distributions coincide. This is in contrast to the Euclidean case, where instead of the sum over all spherical $k$-faces lying in a common $k$-dimensional subspace one considers the corresponding intensities. These intensities strongly depend on the direction of the subspace, which leads to different results, see \cite{HugSchneiderFacesDirection} and \cite[Chapter 10.3]{SW}.

For $K\in\KK_s(d)$ with $\dim\lin(K)=k+1$ for some $k\in\{0,1,\ldots,d-1\}$ (here, $\lin(\,\cdot\,)$ stands for the linear hull of the argument set taken with respect to $\RRd1$) we write
$$
D(K)=\lin(K)\cap\SS^{d}\in\GG_s(d,k)
$$
for the \textit{direction} of a $k$-dimensional spherical convex set $K$. Moreover, recall the definition of the measure $\kappa_k$ from \eqref{eq:Kappak}.

\begin{theorem}\label{thm:DirectionalDistribution}
Let $d\geq 1$, $k\in\{0,1,\ldots,d-1\}$ and consider a great hypersphere tessellation of $\SS^d$ with non-degenerate directional distribution $\kappa$ and intensity $n\geq d-k$.
	For $C\in\cB(\GG_s(d,k))$ one has that
	$$
	\bP(D(Z_{n,d}^{(k)})\in C)=\bP(D(W_{n,d}^{(k)})\in C)=\kappa_k(C).
	$$
\end{theorem}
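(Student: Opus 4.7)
The plan is to apply the intersection representations from Theorem~\ref{thm:TypicalIntersection} and Theorem~\ref{thm:WeightFaceIntersection} to the bounded measurable test function $h\colon\KK_s(d)\to\RR$ defined by $h(K)=\mathbf{1}\{D(K)\in C\}$ whenever $\dim\lin(K)=k+1$, and $h(K)=0$ otherwise. The crucial geometric observation is that every $k$-dimensional spherical face of a sectional tessellation inside a fixed $S\in\GG_s(d,k)$ has direction exactly $S$. Consequently, in both integral formulas the integrand collapses to $\mathbf{1}\{S\in C\}$ under the outer integration against $\kappa_k$, and the conclusion reduces to checking that the remaining normalization factors produce $\kappa_k(C)$ itself.

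For the typical face, Theorem~\ref{thm:TypicalIntersection} immediately yields
$$
\bP(D(Z_{n,d}^{(k)})\in C) = \int_{\GG_s(d,k)}\bE h(Z_{n-d+k,S})\,\kappa_k(\dint S).
$$
By the non-degeneracy of $\kappa$ (which transfers to $\kappa_k$), the random cell $Z_{n-d+k,S}$ is almost surely a $k$-dimensional spherical polytope whose linear hull coincides with the $(k+1)$-dimensional subspace generating $S$, so $D(Z_{n-d+k,S})=S$ almost surely. The integrand thus reduces to $\mathbf{1}\{S\in C\}$, and the integral evaluates to $\kappa_k(C)$.

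For the weighted typical face, Theorem~\ref{thm:WeightFaceIntersection} gives
$$
\bP(D(W_{n,d}^{(k)})\in C) = \frac{1}{\omega_{k+1}}\int_{\GG_s(d,k)}\int_S \bE h(Z_v(T_{n-d+k,d})\cap S)\,\cH^k(\dint v)\,\kappa_k(\dint S).
$$
For $\cH^k$-almost every $v\in S$ and almost every realization of $T_{n-d+k,d}$, the cell $Z_v(T_{n-d+k,d})$ has non-empty interior in $\SS^d$, so $Z_v(T_{n-d+k,d})\cap S$ is a $k$-dimensional spherical polytope whose linear hull spans $S$, giving again $D(Z_v(T_{n-d+k,d})\cap S)=S$. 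Since $\cH^k(S)=\omega_{k+1}$, the inner integral contributes exactly the factor $\omega_{k+1}$, which cancels the prefactor, and the outer integration produces $\kappa_k(C)$, matching the computation for the typical face.

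The main (essentially the only) point requiring care is the transversality claim that $Z_v(T_{n-d+k,d})\cap S$ has full dimension $k$ inside $S$ for $\cH^k$-almost every $v\in S$. This will follow from the general-position property implied by non-degeneracy of $\kappa$: the boundaries of the finitely many cells of $T_{n-d+k,d}$ are contained in finitely many great hyperspheres, each of which meets $S$ in a subsphere of dimension $k-1$, hence in an $\cH^k$-null subset of $S$. Once this is verified, the two calculations agree and both probabilities equal $\kappa_k(C)$, which is exactly the phenomenon highlighted in the discussion preceding the statement: counting weight and Hausdorff weight per direction produce the same constant total, so the directional distributions of typical and weighted typical $k$-faces must coincide.
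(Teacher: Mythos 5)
Your proposal is correct and follows essentially the same route as the paper: apply Theorems \ref{thm:TypicalIntersection} and \ref{thm:WeightFaceIntersection} to the indicator test function $h(K)={\bf 1}\{D(K)\in C\}$, note that every sectional cell inside a fixed $S\in\GG_s(d,k)$ has direction exactly $S$, and evaluate the resulting integrals. The only difference is that you spell out the almost-sure full-dimensionality of $Z_v(T_{n-d+k,d})\cap S$, which the paper takes for granted; that is a harmless (indeed welcome) extra justification rather than a divergence in method.
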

\begin{proof}
We use Theorem \ref{thm:WeightFaceIntersection} with $h(P)={\bf 1}\{D(P)\in C\}$ to see that
\begin{align*}
\bP(D(W_{n,d}^{(k)})\in C) = {1\over\omega_{k+1}}\int_{\GG_s(d,k)}\int_S\bP(D(Z_v(T_{n-d+k,d})\cap S)\in C)\,\cH^k(\dint v)\kappa_k(\dint S).
\end{align*}
Since $D(Z_v(T_{n-d+k})\cap S)=S$ this yields
\begin{align*}
\bP(D(W_{n,d}^{(k)})\in C) = {1\over\omega_{k+1}}\int_{\GG_s(d,k)}\int_S{\bf 1}\{S\in C\}\,\cH^k(\dint v)\kappa_k(\dint S)=\kappa_k(C).
\end{align*}
Similarly, for the typical spherical $k$-face $Z^{(k)}$ we use Theorem \ref{thm:TypicalIntersection} to see that
\begin{align*}
\bP(D(Z_{n,d}^{(k)})\in C) &= \int_{\GG_s(d,k)}\bP(D(Z_{n-d+k,S})\in C)\,\kappa_k(\dint S)\\
& = \int_{\GG_s(d,k)}{\bf 1}\{S\in C\}\,\kappa_k(\dint S)=\kappa_k(C),
\end{align*}
since $D(Z_{n-d+k,S})=S$. This completes the argument.
\end{proof}

Since $\KK_s(d)$ is a Polish space (with respect to the topology generated by the spherical Hausdorff distance) the regular conditional distribution of $W_{n,d}^{(k)}$, given the direction $D(W_{n,d}^{(k)})=S$ is well defined for $S\in\GG_s(d,k)$. We will denote this conditional distribution by
$$
\bP(W_{n,d}^{(k)}\in A\,|\,D(W_{n,d}^{(k)})=S),\qquad A\in\cB(\KK_s(d)),\ S\in\GG_s(d,k).
$$
The next result yields an interpretation of this distribution as the distribution of the cell in the spherical sectional tessellation $T_{n-d+k,S}$ that contains a uniform random point.

\begin{corollary}
	Let $d\geq 1$, $k\in\{0,1,\ldots,d-1\}$ and consider a great hypersphere tessellation of $\SS^d$ with non-degenerate directional distribution $\kappa$ and intensity $n\geq d-k$.
For $\kappa_k$-almost all $S\in\GG_s(d,k)$ and $A\in\cB(\KK_s(d))$ one has that
$$
\bP(W_{n,d}^{(k)}\in A\,|\,D(W_{n,d}^{(k)})=S) = \bP(Z_U(T_{n-d+k,S})\in A),
$$
where $U$ is a uniform random point in $S$, which is independent of $T_{n-d+k,S}$.
\end{corollary}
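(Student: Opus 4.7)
The plan is to combine Theorem \ref{thm:WeightFaceIntersection} with Theorem \ref{thm:DirectionalDistribution} and then read off the regular conditional distribution from the resulting joint distribution on $\KK_s(d)\times\GG_s(d,k)$.

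First, I would apply Theorem \ref{thm:WeightFaceIntersection} to the test function $h(P)={\bf 1}\{P\in A\}{\bf 1}\{D(P)\in C\}$, where $A\in\cB(\KK_s(d))$ and $C\in\cB(\GG_s(d,k))$ are arbitrary. The key observation is that for $v\in S$ with $S\in\GG_s(d,k)$, the set $Z_v(T_{n-d+k,d})\cap S$ is precisely the cell of the sectional tessellation $T_{n-d+k,S}$ containing $v$, which I shall denote by $Z_v(T_{n-d+k,S})$. Moreover, since this is a $k$-dimensional cell living in $S$, its direction $D(Z_v(T_{n-d+k,S}))$ equals $S$ itself. This reduces the indicator $\mathbf{1}\{D(\cdot)\in C\}$ to $\mathbf{1}\{S\in C\}$, which I can pull out of the inner integrals.

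Carrying out these steps yields
\begin{align*}
\bP(W_{n,d}^{(k)}\in A,\,D(W_{n,d}^{(k)})\in C)
&={1\over\omega_{k+1}}\int_{C}\int_{S}\bP(Z_v(T_{n-d+k,S})\in A)\,\cH^k(\dint v)\,\kappa_k(\dint S)\\
&=\int_{C}\bP(Z_U(T_{n-d+k,S})\in A)\,\kappa_k(\dint S),
\end{align*}
where the last equality uses $\cH^k(S)=\omega_{k+1}$ together with the description of $U$ as a uniform random point in $S$ independent of the tessellation $T_{n-d+k,S}$ (which is built from great hyperspheres $S_{d-k+1},\ldots,S_n$ independent of $S_1,\ldots,S_{d-k}$, hence independent of the choice of the sample point $v$).

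On the other hand, Theorem \ref{thm:DirectionalDistribution} states that the marginal law of $D(W_{n,d}^{(k)})$ on $\cB(\GG_s(d,k))$ is precisely $\kappa_k$. Combining the last display with this identification gives the disintegration
$$
\bP(W_{n,d}^{(k)}\in A,\,D(W_{n,d}^{(k)})\in C)=\int_{C}\bP(Z_U(T_{n-d+k,S})\in A)\,\bP(D(W_{n,d}^{(k)})\in\dint S),
$$
valid for all $A\in\cB(\KK_s(d))$ and $C\in\cB(\GG_s(d,k))$. Since $\KK_s(d)$ is Polish and the map $S\mapsto\bP(Z_U(T_{n-d+k,S})\in A)$ is measurable, the uniqueness of the regular conditional distribution yields the claimed identity for $\kappa_k$-almost every $S\in\GG_s(d,k)$. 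I do not anticipate any serious obstacle; the only minor point requiring care is the verification that $Z_v(T_{n-d+k,d})\cap S=Z_v(T_{n-d+k,S})$ and that the resulting expression can legitimately be interpreted as the law of the cell containing a uniform sample point $U$ on $S$ independent of $T_{n-d+k,S}$, both of which follow directly from the construction.
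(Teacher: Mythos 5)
Your argument is correct and is essentially the paper's own proof: both apply Theorem \ref{thm:WeightFaceIntersection} to the indicator test function involving $A$ and $C$, use $D(Z_v(T_{n-d+k,S}))=S$ to localise the outer integral to $C$, and then compare with the disintegration of the joint law via the marginal $\kappa_k$ from Theorem \ref{thm:DirectionalDistribution} to identify the regular conditional distribution $\kappa_k$-almost everywhere. No gaps.
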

\begin{proof}
From Theorem \ref{thm:WeightFaceIntersection} it follows that, for any $C\in\cB(\GG_s(d,k))$,
\begin{equation}\label{eq:ProofCond1}
\begin{split}
&\bP(W_{n,d}^{(k)}\in A,D(W_{n,d}^{(k)})\in C)\\
&= {1\over\omega_{k+1}}\int_{\GG_s(d,k)}\int_S\bP(Z_v(T_{n-d+k,S})\in A,D(Z_v(T_{n-d+k,S}))\in C)\,\cH^k(\dint v)\kappa_k(\dint S)\\
&= {1\over\omega_{k+1}}\int_{C}\int_S\bP(Z_v(T_{n-d+k,S})\in A)\,\cH^k(\dint v)\kappa_k(\dint S),
\end{split}
\end{equation}
where we used that $D(Z_v(T_{n-d+k,S}))=S$. Moreover, since by Theorem \ref{thm:DirectionalDistribution} the distribution of $D(W_{n,d}^{(k)})$ is given by $\kappa_k$, we have that
\begin{align}\label{eq:ProofCond2}
\bP(W_{n,d}^{(k)}\in A,D(W_{n,d}^{(k)})\in C) &= \int_C\bP(W_{n,d}^{(k)}\in A\,|\,D(W_{n,d}^{(k)})=S)\,\kappa_k(\dint S)
\end{align}
Combining \eqref{eq:ProofCond1} and \eqref{eq:ProofCond2} yields that, for $\kappa_k$- almost all $S\in\GG_s(d,k)$,
\begin{align*}
\bP(W_{n,d}^{(k)}\in A|D(W_{n,d}^{(k)})=S) &= {1\over\omega_{k+1}}\int_S\bP(Z_v(T_{n-d+k,S})\in A)\,\cH^k(\dint x)\\
&=\bP(Z_U(T_{n-d+k,S})\in A),
\end{align*}
where the last step follows from the definition and the independence of $U$.
\end{proof}

A similar result also holds for the typical spherical $k$-face with a given direction. As above, we denote this distribution by
$$
\bP(Z_{n,d}^{(k)}\in A\,|\,D(Z_{n,d}^{(k)})=S),\qquad A\in\cB(\KK_s(d)),\ S\in\GG_s(d,k).
$$

\begin{corollary}
Let $d\geq 1$, $k\in\{0,1,\ldots,d-1\}$ and consider a great hypersphere tessellation of $\SS^d$ with non-degenerate directional distribution $\kappa$ and intensity $n\geq d-k$.
For $\kappa_k$-almost all $S\in\GG_s(d,k)$ and $A\in\cB(\KK_s(d))$ one has that
$$
\bP(Z_{n,d}^{(k)}\in A\,|\,D(Z_{n,d}^{(k)})=S) = \bP(Z(T_{n-d+k,S})\in A).
$$
\end{corollary}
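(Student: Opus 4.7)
The plan is to mirror the proof of the preceding corollary, replacing the use of Theorem \ref{thm:WeightFaceIntersection} by Theorem \ref{thm:TypicalIntersection}. The result is a direct consequence of disintegration: I will compute the joint distribution of $(Z_{n,d}^{(k)},D(Z_{n,d}^{(k)}))$ in two different ways and identify the integrands with respect to $\kappa_k$.

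First, I would fix $A\in\cB(\KK_s(d))$ and $C\in\cB(\GG_s(d,k))$ and apply Theorem \ref{thm:TypicalIntersection} to the non-negative measurable function $h(F)={\bf 1}\{F\in A,\,D(F)\in C\}$. This gives
\begin{align*}
\bP(Z_{n,d}^{(k)}\in A,\,D(Z_{n,d}^{(k)})\in C) &= \int_{\GG_s(d,k)}\bP(Z_{n-d+k,S}\in A,\,D(Z_{n-d+k,S})\in C)\,\kappa_k(\dint S).
\end{align*}
The key observation is that $Z_{n-d+k,S}$ is by construction a cell of the sectional tessellation $T_{n-d+k,S}$ inside the great subsphere $S$, so $\lin(Z_{n-d+k,S})\cap\SS^d=S$, that is, $D(Z_{n-d+k,S})=S$ almost surely. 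Hence the right-hand side reduces to
$$
\int_{C}\bP(Z_{n-d+k,S}\in A)\,\kappa_k(\dint S).
$$

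Second, by Theorem \ref{thm:DirectionalDistribution} the direction $D(Z_{n,d}^{(k)})$ has distribution $\kappa_k$, so by the definition of regular conditional probabilities one has
$$
\bP(Z_{n,d}^{(k)}\in A,\,D(Z_{n,d}^{(k)})\in C) = \int_{C}\bP(Z_{n,d}^{(k)}\in A\,|\,D(Z_{n,d}^{(k)})=S)\,\kappa_k(\dint S).
$$
Equating the two expressions above for every $C\in\cB(\GG_s(d,k))$ and appealing to the uniqueness of the Radon--Nikodym derivative with respect to $\kappa_k$ yields that for $\kappa_k$-almost every $S\in\GG_s(d,k)$,
$$
\bP(Z_{n,d}^{(k)}\in A\,|\,D(Z_{n,d}^{(k)})=S)=\bP(Z_{n-d+k,S}\in A),
$$
which is the claim (writing $Z(T_{n-d+k,S})$ for $Z_{n-d+k,S}$).

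No serious obstacle is expected; the only mild subtlety is a measurability/null-set issue in choosing a single exceptional $\kappa_k$-null set that works uniformly in $A$. This is handled in the standard way by fixing a countable generating $\pi$-system of $\cB(\KK_s(d))$ (available since $\KK_s(d)$ is Polish), verifying the identity for each set in the system outside a common $\kappa_k$-null set, and extending to all Borel sets by a monotone class argument, exactly as in the preceding corollary.
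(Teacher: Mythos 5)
Your proposal is correct and follows essentially the same route as the paper: compute $\bP(Z_{n,d}^{(k)}\in A,\,D(Z_{n,d}^{(k)})\in C)$ once via Theorem \ref{thm:TypicalIntersection} (using $D(Z_{n-d+k,S})=S$) and once via Theorem \ref{thm:DirectionalDistribution} together with the definition of the regular conditional distribution, then identify the $\kappa_k$-densities. Your closing remark on choosing a common null set via a countable generating $\pi$-system is a minor refinement the paper leaves implicit, but the argument is the same.
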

\begin{proof}
Using Theorem \ref{thm:DirectionalDistribution} for the typical spherical $k$-face we see that for any $C\in\cB(\GG_s(d,k))$,
\begin{align*}
\bP(Z_{n,d}^{(k)}\in A,D(Z_{n,d}^{(k)})\in C) &= \int_C\bP(Z_{n,d}^{(k)}\in A\,|\,D(Z_{n,d}^{(k)})=S)\,\kappa_k(\dint S)
\end{align*}
and from Theorem \ref{thm:TypicalIntersection} we have that
\begin{align*}
\bP(Z_{n,d}^{(k)}\in A,D(Z_{n,d}^{(k)})\in C) &= \int_{\GG_s(d,k)}\bP(Z(T_{n-d+k,S})\in A,D(Z(T_{n-d+k,S}))\in C)\,\kappa_k(\dint S)\\
&= \int_{C}\bP(Z(T_{n-d+k,S})\in A)\,\kappa_k(\dint S),
\end{align*}
since $D(Z(T_{n-d+k,S}))=S$. Combination of both identities yields that, for $\kappa_k$- almost all $S\in\GG_s(d,k)$,
$$
\bP(Z_{n,d}^{(k)}\in A\,|\,D(Z_{n,d}^{(k)})=S) = \bP(Z(T_{n-d+k,S})\in A).
$$
The argument is thus complete.
\end{proof}

\section{Explicit results in the isotropic case}\label{sec:Iso}

In this section, if not otherwise stated, we assume that the spherical random great hypersphere tessellation $T_{n,d}$ is isotropic, which means that $\kappa$ is the uniform distribution on $\GG_s(d,d-1)$.

\subsection{The $f$-vector of typical and weighted typical spherical faces}

To present an explicit formula for the expected number $\bE f_\ell(W_{n,d}^{(k)})$ of $\ell$-dimensional spherical faces of the weighted typical spherical $k$-face of a great hypersphere tessellation $T_{n,d}$ we need to introduce some notation taken from \cite{kabluchko_poisson_zero}. If $[x^\ell]P(x)$ denotes the coefficient of $x^\ell$ in a polynomial (or, more generally, a Laurent series) $P(x)$, we define for $m\in\{0,1,\ldots,\}$ and $k\in\ZZ$ the quantity
\begin{equation}\label{eq:Aml}
A[m,\ell] = \begin{cases}
[x^\ell]Q_m(x) &: \ell\ \text{even}\\
[x^\ell]\big(\tanh\big({\pi\over 2x}\big)Q_m(x)\big) &: \ell\ \text{odd},m\ \text{even}\\
[x^\ell]\big({\rm cotanh}\big({\pi\over 2x}\big)Q_m(x)\big) &: \ell\ \text{odd},m\ \text{odd}
\end{cases}
\end{equation}
for $m\in\{0,1,2,\ldots\}$ and $\ell\in\ZZ$, where $Q_m(x)$ is defined as $Q_0(x)=Q_1(x)=1$ and
$$
Q_m(x) = (1+(m-1)^2x^2)(1+(m-3)^2x^2)(1+(m-5)^2x^2)\cdots
$$
for $m\in\{2,3,\ldots\}$, and the last factor in this product is either $1+x^2$ or $1+2^2x^2$. Note that $A[m,\ell]=0$ whenever $\ell>m$.
Furthermore, let $B\{m,\ell\}$ be given by
\begin{align}\label{eq:def_B}
B\{m,\ell\} = {1\over(\ell-1)!(m-\ell)!}\int_0^\pi (\sin x)^{\ell-1}x^{m-\ell}\,\dint x
\end{align}
for $m\in\NN$ and $\ell\in\{1,\ldots,m\}$. Following \cite[Equation (3.16)]{kabluchko_poisson_zero} we also put $B\{m,0\} = {\pi^m\over m!}$ and $B\{m,\ell\}=0$ for $m\in\NN$ and $\ell\in\{m+1,m+2,\ldots\}$. This allows us to state the following result, which yields an explicit formula for $\bE f_\ell(W_{n,d}^{(k)})$. Some particular values for small $d$ and $n$ are collected in Appendix \ref{app:FaceNumbers}, see also Figure \ref{fig:FTypicalWeighted}.

\begin{theorem}\label{thm:fVector}
Let $d\geq 1$, $k\in\{0,1,\ldots,d\}$ and consider an isotropic great hypersphere tessellation of $\SS^d$ with intensity $n\geq d+1$. For $\ell\in\{0,1,\ldots,k-1\}$ the expected number of spherical $\ell$-faces of the weighted typical spherical $k$-face is given by
\begin{align*}
\bE f_\ell(W_{n,d}^{(k)}) &= {(n-d+k)!\,\pi^{d-\ell-n}\over (k-\ell)!}\\
&\qquad\times\sum_{s=0}^{\lfloor{\ell\over 2}\rfloor}B\{n-d+k,k-2s\}
(k-2s-1)^2A[k-2s-2,k-\ell-2],
\end{align*}
where the term $0^2 A[-1,-1]$ has to be interpreted as $2/\pi$, if it appears.
\end{theorem}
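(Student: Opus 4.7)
The plan is to reduce the theorem, in two steps, to a known formula for the expected face numbers of a random spherical convex hull on a half-sphere.

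\textbf{Step 1 (reduction to a cell in $\SS^k$).} Because the functional $f_\ell$ is rotation invariant, Corollary \ref{cor:CellSection} immediately yields
$$
\bE f_\ell(W_{n,d}^{(k)}) = \bE f_\ell(W_{n-d+k,k}).
$$
Setting $m:=n-d+k$, one has $\pi^{d-\ell-n}=\pi^{k-\ell-m}$, so the statement is reduced to proving
$$
\bE f_\ell(W_{m,k}) = \frac{m!\,\pi^{k-\ell-m}}{(k-\ell)!}\sum_{s=0}^{\lfloor \ell/2 \rfloor} B\{m,k-2s\}\,(k-2s-1)^2\,A[k-2s-2,k-\ell-2]
$$
for the weighted typical cell of an isotropic great hypersphere tessellation of $\SS^k$ driven by $m$ hyperspheres.

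\textbf{Step 2 (polarity).} By the remark following \eqref{eq:DistributionWeightedTypicalKface}, $W_{m,k}$ agrees in distribution with the almost surely unique cell of $T_{m,k}$ that contains a uniform random point, which by isotropy may be taken to be the north pole $e\in\SS^k$. For each great hypersphere $S_i$ generating $T_{m,k}$, choose its normal $v_i$ in the open upper hemisphere $\SS^k_+=\{x\in\SS^k:\langle x,e\rangle>0\}$. Then $v_1,\ldots,v_m$ are i.i.d.\ uniform on $\SS^k_+$ and
$$
W_{m,k} = \{x\in\SS^k : \langle x,v_i\rangle\geq 0 \text{ for all } i=1,\ldots,m\},
$$
which is precisely the spherical polar (in the sense of the dual cone in $\RR^{k+1}$, intersected with $\SS^k$) of the spherical convex hull $P_m:=\mathrm{conv}_s(v_1,\ldots,v_m)$. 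Almost surely $P_m$ is a full-dimensional spherical polytope contained in $\SS^k_+$; spherical polarity reverses its face lattice, so
$$
f_\ell(W_{m,k}) = f_{k-\ell-1}(P_m) \qquad \text{almost surely.}
$$

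\textbf{Step 3 (insertion of the beta polytope formula).} The quantity $\bE f_{k-\ell-1}(P_m)$ is exactly the object computed in \cite{kabluchko_poisson_zero} via the beta polytope framework: the expected number of $j$-dimensional spherical faces of the spherical convex hull of $m$ i.i.d.\ uniform points on $\SS^k_+$ admits an explicit expression in terms of the constants $A[\cdot,\cdot]$ and $B\{\cdot,\cdot\}$ introduced in \eqref{eq:Aml} and \eqref{eq:def_B}. Applying that formula with $j=k-\ell-1$ and reindexing the inner summation variable produces the right-hand side above.

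The principal obstacle is bookkeeping. One must track the index shift from $\ell$ to $k-\ell-1$ induced by polarity, respect the parity-dependent branches in the definition \eqref{eq:Aml} of $A[m,\ell]$ (involving $\tanh$ or $\coth$), and handle the boundary cases of the sum. In particular, the convention $0^2A[-1,-1]=2/\pi$ is exactly what is required to interpret the summand with $s=\ell/2$ when $\ell$ is even and $k=\ell+1$, where both factors $k-2s-1$ and $k-\ell-2$ degenerate simultaneously; this conventional value matches the residue of the $\coth$-factor in the formula from \cite{kabluchko_poisson_zero}. Once the index conventions are aligned, the identity follows directly.
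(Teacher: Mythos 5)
Your proposal is correct and follows essentially the same route as the paper: reduce to $W_{n-d+k,k}$ via Corollary \ref{cor:CellSection}, pass by spherical polarity to the convex hull of $m=n-d+k$ i.i.d.\ uniform points on a half-sphere (the paper cites \cite[Remark 1.8]{KabluchkoThaele_VoronoiSphere} for the identity $\bE f_\ell(W_{m,k})=\bE f_{k-\ell-1}(D_{m,k})$ where you sketch the duality directly), and then substitute Theorem 2.2 of \cite{kabluchko_poisson_zero} with the indices $d\mapsto k$, $n\mapsto n-d+k$, $k\mapsto k-\ell-1$. Your bookkeeping of the prefactor, the summation range $s\le\lfloor\ell/2\rfloor$, and the degenerate case $0^2A[-1,-1]=2/\pi$ (arising for $\ell$ even and $k=\ell+1$) is accurate.
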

\begin{proof}
From Corollary \ref{cor:CellSection} (applied twice) it follows that for all $\ell\in \{0,\ldots,k\}$,
$$
\bE f_\ell(W_{n,d}^{(k)}) = \bE f_\ell(Z_e(T_{n-d+k,k})) = \bE f_\ell (W_{n-d+k, k}),
$$
where $Z_e(T_{n-d+k,k})$ is the north pole cell of the isotropic great hypersphere tessellation $T_{n-d+k,k}$ in $\SS^{k}$, and $W_{n-d+k, k}$ is the weighted typical cell of the same tessellation.

To proceed, let $Y_1,\ldots,Y_n$ be independent and uniformly distributed random points on the lower half-sphere $\SS_-^{d}=\{x=(x_1,\ldots,x_{d+1})\in\SS^{d}:x_{d+1}<0\}$. The spherical random polytope $D_{n,d}=\pos(Y_1,\ldots,Y_n)\cap\SS^{d}$ is connected to the weighted cell $W_{n,d}=W_{n,d}^{(d)}$ of the spherical random tessellation $T_{n,d}$ by spherical polarity. In particular, this implies that
$$
\bE f_\ell(W_{n,d}) = \bE f_{d-\ell-1}(D_{n,d})
$$
for all $\ell\in\{0,1,\ldots,d-1\}$, see \cite[Remark 1.8]{KabluchkoThaele_VoronoiSphere}.
Altogether, we have
$$
\bE f_\ell(W_{n,d}^{(k)}) = \bE f_\ell (W_{n-d+k, k}) = \bE f_{k-\ell-1}(D_{n-d+k, k})
$$
for all $\ell\in \{0,1,\ldots, k-1\}$.
The quantities on the right hand side of this identity were explicitly determined in \cite[Theorem 2.2]{kabluchko_poisson_zero} as follows:
\begin{equation}\label{eq:theo:spherical_polytope_f_vector}
\bE f_{k} (D_{n,d}) = \frac{n!\pi^{k+1-n}}{(k+1)!} \sum_{\substack{s=0,1,\ldots \\ d-2s\geq k+1}} B\{n, d-2s\}
(d-2s-1)^2 A[d-2s-2,k-1]
\end{equation}
for all $d\geq 1$, $n\geq d+1$, and all $k\in \{0,\ldots,d-1\}$. The term $0^2 A[-1,-1]$, if it appears,  has to be interpreted as $2/\pi$ according to Remark~2.3 in~\cite{kabluchko_poisson_zero}. Applying this result to $\bE f_{k-\ell-1}(D_{n-d+k, k})$, which requires to replace $d$ by $k$,  $n$ by $n-d+k$ and $k$ by $k-\ell-1$ in~\eqref{eq:theo:spherical_polytope_f_vector}, yields the desired formula.
\end{proof}

\begin{figure}
	\centering
	\includegraphics[width=0.45\columnwidth]{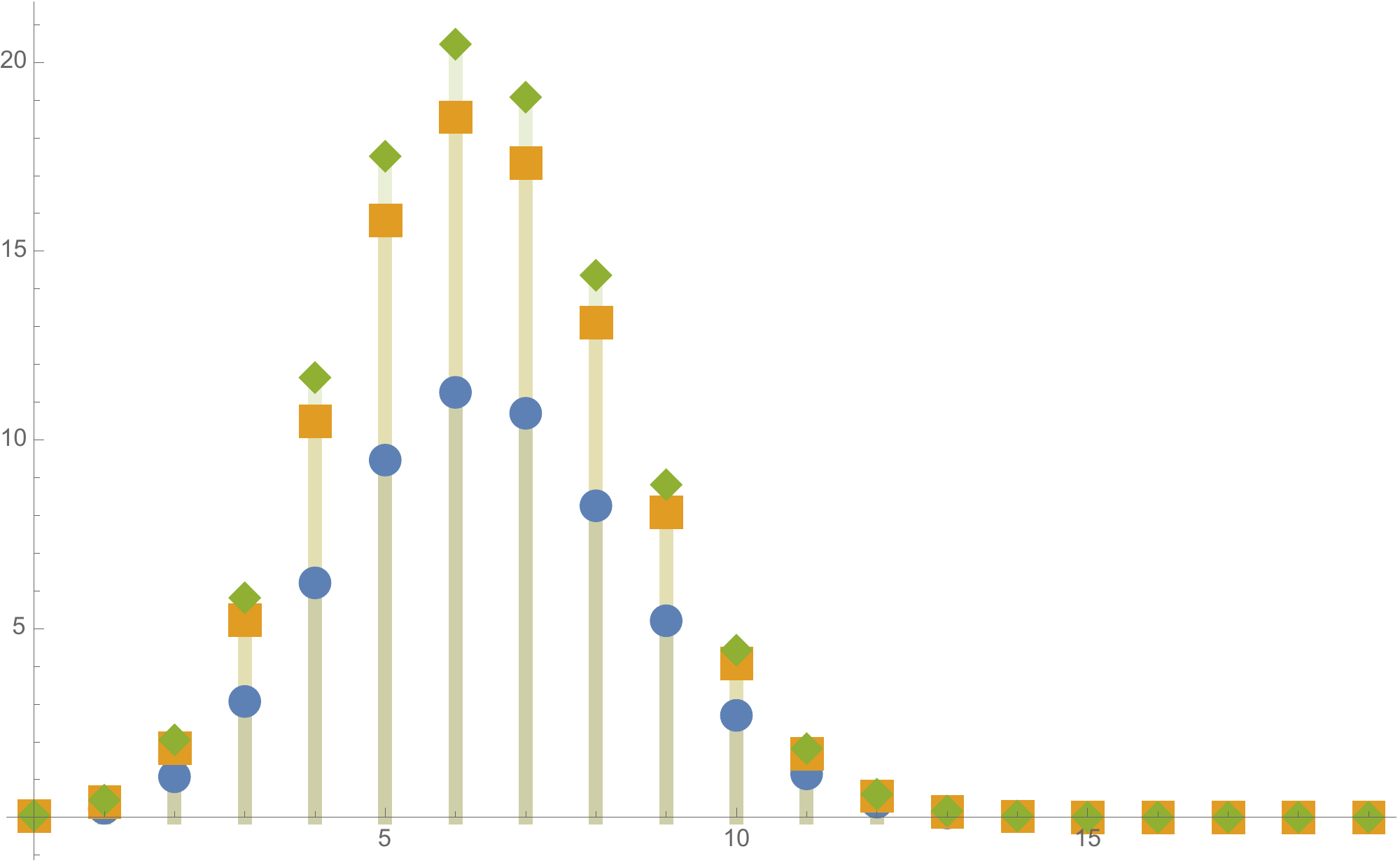}\quad
	\includegraphics[width=0.45\columnwidth]{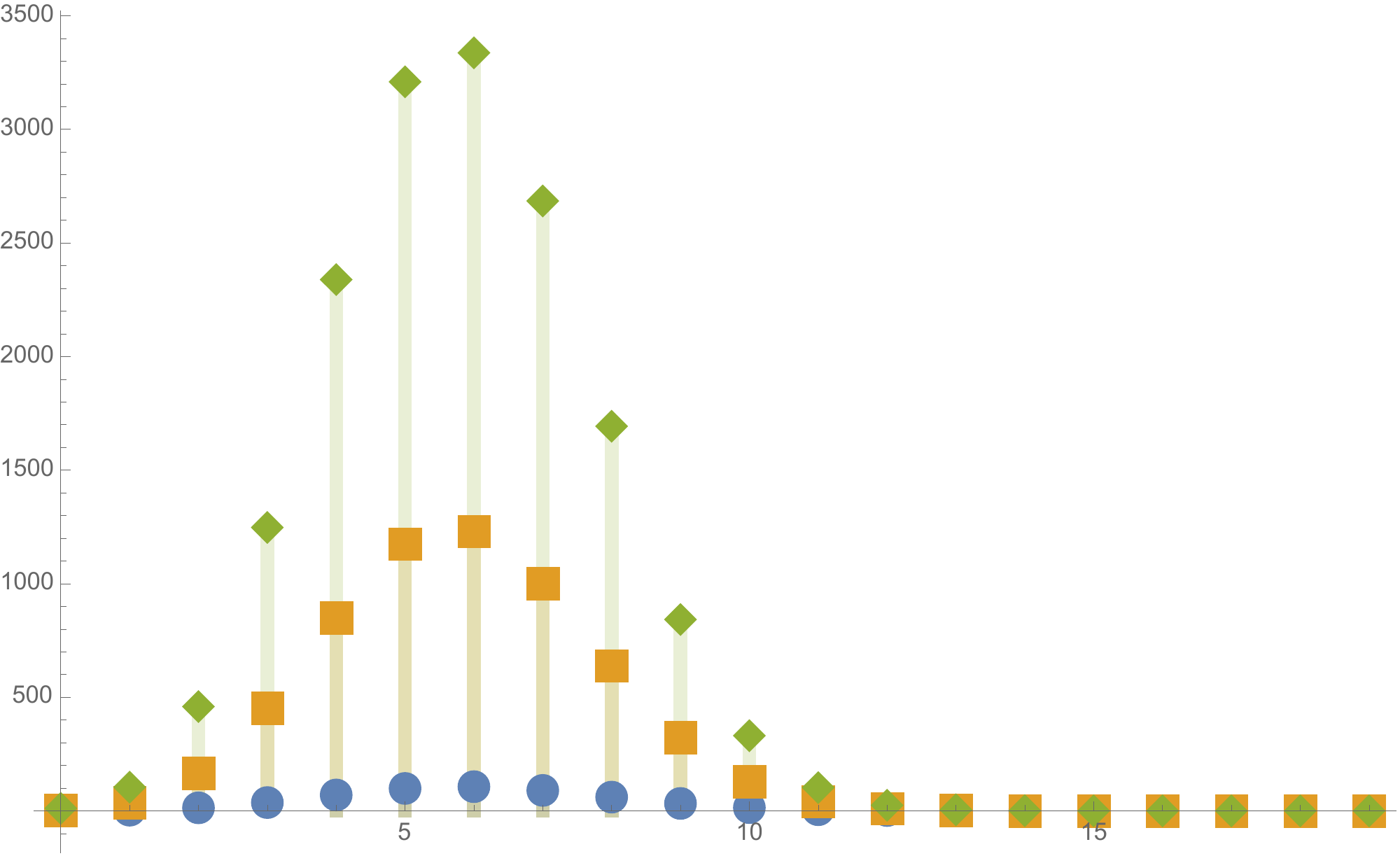}
	\caption{Normalized expected spherical face numbers $\bE f_\ell(\,\cdot\,)/10^7$ of $Z_{n,d}$ (left) and $W_{n,d}$ (right) as a function of $\ell$ for $d=19$ and $n=40$ (blue dots), $n=60$ (orange squares) and $n=80$ (green diamonds).}
	\label{fig:FTypicalWeighted}
\end{figure}

Using Theorem \ref{thm:TypicalIntersection} together with the explicit formula for the $f$-vector of a Schl\"afli random cone from \cite{CoverEfron} (see also \cite{HugSchneiderConicalTessellations}), we can derive an explicit formula for $\bE f_\ell(Z_{n,d}^{(k)})$. In contrast to the previous result, this formula is available for arbitrary non-degenerate directional distributions $\kappa$ and is of purely combinatorial nature.

\begin{theorem}\label{thm:fvectortypical}
Let $d\geq 1$, $k\in\{0,1,\ldots,d\}$ and consider a great hypersphere tessellation of $\SS^d$ with non-degenerate directional distribution $\kappa$ and intensity $n\geq d-k$. For $\ell\in\{0,1,\ldots,k-1\}$ the expected number of $\ell$-faces of the typical spherical $k$-face spherical is given by
$$
\bE f_\ell(Z_{n,d}^{(k)}) = {2^{k-\ell}{n-d+k\choose k-\ell}C(n-d+\ell,\ell)\over C(n-d+k,k)}.
$$
\end{theorem}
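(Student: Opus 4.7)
The plan is to start from Theorem \ref{thm:TypicalIntersection} applied with $h = f_\ell$ (viewed as a non-negative measurable function on $\KK_s(d)$ taking the value $f_\ell(P)$ on a spherical polytope $P$), reducing the expectation on the left-hand side to an integral over $\GG_s(d,k)$ of the quantity $\bE f_\ell(Z_{n-d+k,S})$ against $\kappa_k(\dint S)$. The task then becomes to evaluate, for $\kappa_k$-a.e.\ fixed $S$, the expected number of $\ell$-faces of the typical cell of the sectional tessellation $T_{n-d+k,S}=T_d(S_{d-k+1},\dots,S_n)\cap S$ inside $S$.

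For fixed $S\in\GG_s(d,k)$, the random set $T_{n-d+k,S}$ is, after identifying $S$ with $\SS^k$, a great hypersphere tessellation of a $k$-sphere driven by the $n-d+k$ random great $(k-1)$-subspheres $S_{d-k+1}\cap S,\dots,S_n\cap S$ of $S$. These are exchangeable (since $S_{d-k+1},\dots,S_n$ are i.i.d.) and almost surely in general position by the non-degeneracy of $\kappa$. I would then invoke the Cover--Efron-type formula for the typical cell of such a tessellation, which yields
\[
\bE f_\ell(Z_{n-d+k,S}) \;=\; \frac{2^{k-\ell}\binom{n-d+k}{k-\ell}\,C(n-d+\ell,\ell)}{C(n-d+k,k)}.
\]
If one prefers a self-contained derivation, the formula follows from a double-counting identity: the number of cells of $T_{n-d+k,S}$ equals $C(n-d+k,k)$ and the number of $\ell$-faces equals $\binom{n-d+k}{k-\ell}C(n-d+\ell,\ell)$ (both a.s., by applying \eqref{eq:Cnd} and \eqref{eq:Cndk} inside $S\cong\SS^k$), while each $\ell$-face is contained in exactly $2^{k-\ell}$ cells because, transversally to the face, $k-\ell$ great subspheres in general position create $2^{k-\ell}$ local orthants. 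Dividing the total count $\sum_{P}f_\ell(P)=2^{k-\ell}\cdot|\cF_\ell(T_{n-d+k,S})|$ by the number of cells $C(n-d+k,k)$ gives the displayed identity directly from the definition \eqref{eq:DistributionTypicalKface} of the typical $k$-face, applied with $k$ replaced by $k$ inside $\SS^k$ (equivalently, the typical cell).

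The crucial observation is that the right-hand side of the displayed identity is a \emph{deterministic} combinatorial quantity, independent of $S$. Consequently, integrating against the probability measure $\kappa_k$ in the formula obtained from Theorem \ref{thm:TypicalIntersection} produces the same value, giving the claimed expression for $\bE f_\ell(Z_{n,d}^{(k)})$. The argument works without any isotropy assumption, which explains why the result holds for arbitrary non-degenerate $\kappa$. The only delicate point is verifying that the Cover--Efron combinatorial computation is insensitive to the (in general non-rotation-invariant) conditional directional distribution induced on $S$; this is automatic because the formula depends only on counts that are almost surely constant under any non-degenerate exchangeable distribution of the generating hyperspheres.
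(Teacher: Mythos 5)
Your proof is correct and follows essentially the same route as the paper: apply Theorem \ref{thm:TypicalIntersection} with $h=f_\ell$, invoke the Cover--Efron/Hug--Schneider formula for the expected $f$-vector of the Schl\"afli random cone in $S$, observe that this value is independent of $S$, and integrate against the probability measure $\kappa_k$. Your optional self-contained double-counting derivation of that formula (each $\ell$-face lies in exactly $2^{k-\ell}$ cells in general position) is a correct supplement, but the core argument coincides with the paper's.
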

\begin{proof}
Theorem \ref{thm:WeightedvsTypical} yields that
$$
\bE f_\ell(Z_{n,d}^{(k)}) = \int_{\GG_s(d,k)}\bE f_\ell(Z(T_{n-d+k,S}))\,\kappa_k(\dint S).
$$
Following the terminology in \cite{HugSchneiderConicalTessellations}, the $k$-dimensional spherical random polytope $Z(T_{n-d+k,S})$ in $S$ is the spherical version of the $(\kappa,n-d+k)$-Schl\"afli random cone in $S$. Its $f$-vector is explicitly known from \cite{CoverEfron} or \cite[Corollary 4.1]{HugSchneiderConicalTessellations}. From this it follows that, independently of $S$,
\begin{align*}
\bE f_\ell(Z(T_{n-d+k,S})) = {2^{k-\ell}{n-d+k\choose k-\ell}C(n-d+\ell,\ell)\over C(n-d+k,k)}.
\end{align*}
Since $\kappa_k$ is a probability measure, the result follows.
\end{proof}

From the particular values for $\bE f_\ell(W_{n,d}^{(k)})$ and $\bE f_\ell(Z_{n,d}^{(k)})$ one can verify numerically that $\bE f_\ell(W_{d+1,d}^{(k)})=\bE f_\ell(Z_{d+1,d}^{(k)})$, whereas $\bE f_\ell(W_{n,d}^{(k)})>\bE f_\ell(Z_{n,d}^{(k)})$ whenever $n\geq d+2$ (see Appendix \ref{app:FaceNumbers}). While we were not able to prove such an inequality in full generality, we show a weaker inequality for the expected vertex numbers. Since the argument requires tools we only develop in the next section, we postpone the proof. We remark that the corresponding inequality for the expected vertex number of the typical and the weighted typical $k$-face of a Poisson hyperplane tessellation in the Euclidean space $\RR^d$ is trivial, since the expected vertex number of the typical $k$-face is a lower bound for that of the weighted typical $k$-face, see \cite[Theorem 2]{SchneiderWeightedFaces}.

\begin{proposition}\label{thm:MonotoneF0}
Let $d\geq 1$, $k\in\{1,\ldots,d\}$ and consider an isotropic great hypersphere tessellation of $\SS^d$ with intensity $n\geq d+1$. Then
$$
\bE f_0(W_{d+1,d}^{(k)})=\bE f_0(Z_{d+1,d}^{(k)})\qquad\text{and}\qquad\bE f_0(W_{n,d}^{(k)})\geq {C(n-d+k,k)\over 2^k C(n-d,k)}\bE f_0(Z_{n,d}^{(k)})
$$
for all $n\geq d+2$.
\end{proposition}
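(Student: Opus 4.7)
The plan is to reduce the proposition to the top-dimensional case $d=k$ and then analyse $\bE f_0(W_{m,k})$ by double counting. Writing $m:=n-d+k$ and applying Corollaries~\ref{cor:CellSection} and~\ref{cor:TypicalIso} with the rotation-invariant functional $f_0$, one obtains $\bE f_0(W_{n,d}^{(k)})=\bE f_0(W_{m,k})$ and $\bE f_0(Z_{n,d}^{(k)})=\bE f_0(Z_{m,k})=2^{k+1}\binom{m}{k}/C(m,k)$, where the last equality is Theorem~\ref{thm:fvectortypical} at $\ell=0$. The whole claim therefore becomes
\[
\bE f_0(W_{m,k}) \;\geq\; \frac{2\binom{m}{k}}{C(m-k,k)}, \qquad m \geq k+1,
\]
with equality at $m=k+1$. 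In that case the identity is immediate: every one of the $2^{k+1}=C(k+1,k)$ cells of $T_{k+1,k}$ is a spherical simplex with exactly $k+1$ vertices, so $f_0\equiv k+1$ and both sides equal $k+1$.

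For the main inequality I use the definition~\eqref{eq:DistributionWeightedTypicalKface} of $W_{m,k}$ (specialized to $k=d$) to write
\[
\omega_{k+1}\,\bE f_0(W_{m,k}) \;=\; \bE\!\sum_{v\in\cF_0(T_{m,k})}\ \sum_{P\in T_{m,k}:\,v\in\cF_0(P)}\cH^k(P),
\]
obtained by exchanging the order of summation. For each vertex $v$ let $I=I(v)\subseteq\{1,\ldots,m\}$, $|I|=k$, be the almost surely unique index set with $\{v,-v\}=\bigcap_{i\in I}S_i$, and let $H_j^+(v)$ denote the closed half-sphere bounded by $S_j$ that contains $v$. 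The $k$ great hyperspheres $(S_i)_{i\in I}$ divide $\SS^k$ into $2^k$ spherically convex octants, each carrying $v$ on its boundary, and intersecting each octant with $\bigcap_{j\notin I}H_j^+(v)$ yields (by spherical convexity) a connected set which coincides with the unique cell of $T_{m,k}$ lying in that octant and having $v$ as a vertex. These $2^k$ cells have disjoint interiors and jointly tile $\bigcap_{j\notin I}H_j^+(v)$, which is precisely the cell $Z_v(T_I^\star)$ of the reduced arrangement $T_I^\star:=T_k((S_j)_{j\notin I})$ containing $v$. Consequently
\[
\sum_{P\in T_{m,k}:\,v\in\cF_0(P)}\cH^k(P)=\cH^k\bigl(Z_v(T_I^\star)\bigr).
\]
Since $v=\pm v_I$ is measurable with respect to $(S_i)_{i\in I}$ and hence independent of $T_I^\star$, and since for an isotropic tessellation the expected $\cH^k$-volume of the cell at any fixed point coincides with $\bE\cH^k$ of the weighted typical cell (by rotation invariance and the definition of $W_{m-k,k}$), the expectation of the above quantity equals $\bE\cH^k(W_{m-k,k})$. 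Summing over all $2\binom{m}{k}$ vertices produces
\[
\bE f_0(W_{m,k})=\frac{2\binom{m}{k}}{\omega_{k+1}}\,\bE\cH^k(W_{m-k,k}).
\]

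To finish, I apply Theorem~\ref{thm:WeightedvsTypical} once more, now to $h=\cH^k$ in the tessellation $T_{m-k,k}$, which gives
\[
\bE\cH^k(W_{m-k,k})=\frac{\bE[\cH^k(Z_{m-k,k})^2]}{\bE\cH^k(Z_{m-k,k})}\;\geq\;\bE\cH^k(Z_{m-k,k})=\frac{\omega_{k+1}}{C(m-k,k)},
\]
where the inequality is the elementary bound $\bE X^2\geq(\bE X)^2$ and the last equality is~\eqref{eq:MeasureZk} specialized to the $k$-dimensional tessellation $T_{m-k,k}$. Substitution into the previous display yields the required inequality for every $m\geq k+1$; the bound is tight at $m=k+1$ because then $\cH^k(Z_{1,k})\equiv\omega_{k+1}/2$ is deterministic, so Jensen's inequality is saturated. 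I expect the main obstacle to be the geometric identity for $\sum_{P\ni v}\cH^k(P)$: the key verification is that the $2^k$ octants at $v$ intersect $\bigcap_{j\notin I}H_j^+(v)$ in precisely the $2^k$ cells of $T_{m,k}$ touching $v$, with no overlap and no gap, which relies on the spherical convexity of each piece.
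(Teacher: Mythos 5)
Your proof is correct, and its logical skeleton coincides with the paper's: both arguments rest on exactly two ingredients, namely (i) the size-bias relation between $W$ and $Z$ (Theorem~\ref{thm:WeightedvsTypical}), which forces the expected $\cH^k$-weight of the weighted cell to dominate that of the typical cell, and (ii) an Efron-type identity converting expected vertex numbers of $W_{m,k}$ into the expected $\cH^k$-measure of $W_{m-k,k}$, combined with the explicit value \eqref{eq:MeasureZk} for the typical cell. Where you differ is in how these ingredients are obtained and assembled: you first reduce everything to the top-dimensional case on $\SS^k$ via Corollaries~\ref{cor:CellSection} and~\ref{cor:TypicalIso}, you re-derive the $\ell=k$ instance of the Efron identity from scratch by the vertex--cell double counting (the decomposition of the $2^k$ cells meeting a vertex $v$ into octants of the $k$ hyperspheres through $v$ intersected with the cell of the reduced arrangement), and you phrase the size-bias inequality as $\bE X^2\geq(\bE X)^2$ applied to the representation of Theorem~\ref{thm:WeightedvsTypical}, whereas the paper invokes the already-established Theorem~\ref{thm:Efron} in the general $(d,k)$ setting and derives $\bE U_k(W_{n,d}^{(k)})\geq\bE U_k(Z_{n,d}^{(k)})$ from stochastic dominance of the size-biased distribution. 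Your geometric derivation of the identity $\bE f_0(W_{m,k})=\tfrac{2\binom{m}{k}}{\omega_{k+1}}\bE\cH^k(W_{m-k,k})$ is sound (the key point that the $2^k$ octant pieces are precisely the cells of $T_{m,k}$ at $v$, tiling $Z_v(T_I^\star)$ without gap or overlap, holds because each piece is itself a full-dimensional intersection of one closed half-sphere per index) and makes the proof more self-contained, at the cost of re-proving a special case of a result the paper already has; the paper's version is shorter but leans on Theorem~\ref{thm:Efron} and Corollary~\ref{cor:U}. Your treatment of the equality case $n=d+1$ (all cells of $T_{k+1,k}$ are simplices, and $\cH^k(Z_{1,k})$ is deterministic so Jensen saturates) is also a nice explicit check that the paper leaves implicit.
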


\begin{remark}
	It can be checked that the prefactor ${C(n-d+k,k)\over 2^k C(n-d,k)}$ in Proposition \ref{thm:MonotoneF0} is always strictly less than $1$ and tends to zero, as $n\to\infty$, for any fixed $d$ and $k$.
\end{remark}

\subsection{An Efron-type identity and spherical Querma\ss integrals}

Let $K\subset\RR^d$ be a convex body with volume one. For $n\geq d+1$ let $K_n$ be the convex hull of $n$ uniformly distributed random points in $K$ and write $V(K_n)$ for the volume ($d$-dimensional Lebesgue measure) and $f_0(K_n)$ for the number of vertices of $K_n$. \textit{Efron's identity} relates these two quantities as follows:
$$
\bE V(K_n) = 1-{\bE f_0(K_{n+1})\over n+1},
$$
see \cite[Equation (8.12)]{SW}. While this equality does not admit an extension to relationships for the number of $k$-dimensional faces of $K_{n+1}$ for $k\in\{1,\ldots,d-1\}$, such identities were established in \cite[Section 5]{SchneiderWeightedFaces} for the volume-weighted cell of a stationary and isotropic Poisson hyperplane tessellation in $\RR^d$ (see also \cite{HoermannHugReitznerThaele} for generalizations). In this case the expected number of $k$-dimensional faces is linked to the expected $(d-k)$th Euclidean intrinsic volume or Euclidean Querma\ss integral. For the $\cH^{d}$-weighted cell $W_{n,d}=W_{n,d}^{(d)}$ of an isotropic great hyperspheres tessellation $T_{n,d}$ a similar relationship was established in \cite[p.\ 414]{HugSchneiderConicalTessellations} (see also \cite[Theorem 2.7]{MarynychKabluchkoTemesvariThaele}). It says that, for any $\ell\in\{0,1,\ldots,d\}$ one has that
\begin{align}\label{eq:EfronHugSchneider}
\bE f_{d-\ell}(W_{n,d}) = 2{n\choose\ell}\bE U_\ell(W_{n-\ell,d}).
\end{align}
Here, the functionals $U_\ell$ are the spherical analogues of the Euclidean Querma\ss integrals mentioned above and given by
$$
U_\ell(P) = {1\over 2}\int_{\GG_s(d,d-\ell)}{\bf 1}\{P\cap S\}\,\nu_{d-\ell}(\dint S),
$$
whenever $P\in\PP_s(d)$ is not a great subsphere of $\SS^{d}$ (if $S\in\GG_s(d,k)$ then $U_\ell(S)=1$ if $k-\ell\geq 0$ and even, and $U_\ell(S)=0$ if $k-\ell<0$ or odd).
Our next result generalizes the Efron-type identity \eqref{eq:EfronHugSchneider} to weighted spherical $k$-faces.

\begin{theorem}\label{thm:Efron}
	For $d\geq  1$  consider an isotropic great hypersphere tessellation $T_{n,d}$ of $\SS^d$ with $n\geq d+1$.
	Then, for all $k\in\{0,1,\ldots,d\}$ and $\ell\in\{0,1,\ldots,k\}$ it holds that
	$$
	\bE f_{k-\ell}(W_{n,d}^{(k)}) = 2{n-d+k\choose\ell}\bE U_\ell(W_{n-\ell,d}^{(k)}).
	$$
\end{theorem}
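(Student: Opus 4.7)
The plan is to reduce everything to the cell-level identity \eqref{eq:EfronHugSchneider} in dimension $k$, using Corollary \ref{cor:CellSection} as a bridge.

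To begin, I would note that both $P \mapsto f_{k-\ell}(P)$ and $P \mapsto U_\ell(P)$ are rotation-invariant measurable functions on $\PP_s(d)$: the first because face numbers are a purely combinatorial invariant, and the second because the Haar probability measure $\nu_{d-\ell}$ on $\GG_s(d,d-\ell)$ is rotation invariant. Consequently, Corollary \ref{cor:CellSection}, applied first to $T_{n,d}$ and then to $T_{n-\ell,d}$, gives
\begin{equation*}
\bE f_{k-\ell}(W_{n,d}^{(k)}) = \bE f_{k-\ell}(W_{n-d+k,\,k})
\quad\text{and}\quad
\bE U_\ell(W_{n-\ell,d}^{(k)}) = \bE U_\ell(W_{n-\ell-d+k,\,k}),
\end{equation*}
where on each right-hand side the cell is viewed as a polytope inside the great subsphere $E_k \subset \SS^d$, while $f_{k-\ell}$ and $U_\ell$ are still the functionals on $\PP_s(d)$ defined in the paper.

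The second step is to check that $f_{k-\ell}$ and $U_\ell$ are intrinsic when restricted to polytopes contained in $E_k$, so that the right-hand sides above truly coincide with the corresponding expectations in the isotropic great hypersphere tessellation of $\SS^k$. For the face number this is clear from the definition. For the spherical Querma\ss integral, I would disintegrate $\nu_{d-\ell}$ via the intersection map $S \mapsto S \cap E_k$: by the transitivity of the action of $\SO(k+1)$ on $\GG_s(k,k-\ell)$ together with the invariance of the Haar measures under those rotations of $\SS^d$ that stabilise $E_k$, the push-forward of $\nu_{d-\ell}$ under this map must equal $\nu_{k-\ell}$. Since $P \cap S = P \cap (S\cap E_k)$ whenever $P\subset E_k$, this yields the identity between the ambient and the intrinsic spherical Querma\ss integrals of $P$.

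It then remains to invoke the weighted-cell Efron identity \eqref{eq:EfronHugSchneider} on $\SS^k$ with intensity $n-d+k$, which reads
\begin{equation*}
\bE f_{k-\ell}(W_{n-d+k,k}) = 2\binom{n-d+k}{\ell}\,\bE U_\ell(W_{n-d+k-\ell,\,k}),
\end{equation*}
and to combine it with the two reductions above. The only non-routine ingredient is the intrinsic compatibility of $U_\ell$ across dimensions, which I expect to be the main (though minor) obstacle to writing out a fully careful proof, but this amounts to a short integral-geometric disintegration exercise on Grassmannians.
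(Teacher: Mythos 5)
Your proof is correct and follows essentially the same route as the paper's: two applications of Corollary \ref{cor:CellSection} (to $h=f_{k-\ell}$ and to $h=U_\ell$) sandwiching the cell-level Efron identity \eqref{eq:EfronHugSchneider} applied on $\SS^k$ with intensity $n-d+k$. The only addition is your explicit verification that $U_\ell$ is intrinsic for polytopes contained in $E_k$ (by pushing $\nu_{d-\ell}$ forward under $S\mapsto S\cap E_k$), a point the paper uses implicitly; this is a correct and welcome, though minor, supplement.
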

\begin{proof}
	We apply Corollary \ref{cor:CellSection} to the rotation invariant function $h(P)=f_{k-\ell}(P)$. This yields
	$$
	\bE f_{k-\ell}(W_{n,d}^{(k)}) = \bE f_{k-\ell}(Z_e(T_{n-d+k,d})\cap E_k) = \bE f_{k-\ell}(W_{n-d+k,k}),
	$$
	where we recall that $W_{n-d+k-\ell,k}$ stands for the weighted typical cell of $T_{n-d+k,k}$.
	Applying now \eqref{eq:EfronHugSchneider} to this cell leads to
	$$
	\bE f_{k-\ell}(W_{n-d+k,k}) = 2{n-d+k\choose\ell}\bE U_\ell(W_{n-d+k-\ell,k}).
	$$
	Finally, we apply again Corollary \ref{cor:CellSection} to the rotation invariant function $h(P)=U_\ell(P)$ to conclude that
	$$
	\bE U_\ell(W_{n-d+k-\ell,k}) = \bE U_\ell(W_{n-\ell,d}^{(k)}).
	$$
	Putting together these three identities yields the result.
\end{proof}

A combination of Theorem \ref{thm:Efron} with Theorem \ref{thm:fVector} also yields explicit formulas for the expected spherical Querma\ss integrals $\bE U_\ell(W_{n,d}^{(k)})$ of $W_{n,d}^{(k)}$. These quantities can also be determined for the typical spherical $k$-face even for a general directional distribution (for $k=d$ this is known from \cite{HugSchneiderConicalTessellations}). Note that always $\bE U_0(W_{n,d}^{(k)})=1/2$ and $\bE U_0(Z_{n,d}^{(k)})=1/2$ even almost surely without the expectations. Some particular values for small $d$ and $n$ are collected in Appendix \ref{app:Quermass}, see also Figure \ref{fig:UTypicalWeighted}.

\begin{figure}
	\centering
	\includegraphics[width=0.45\columnwidth]{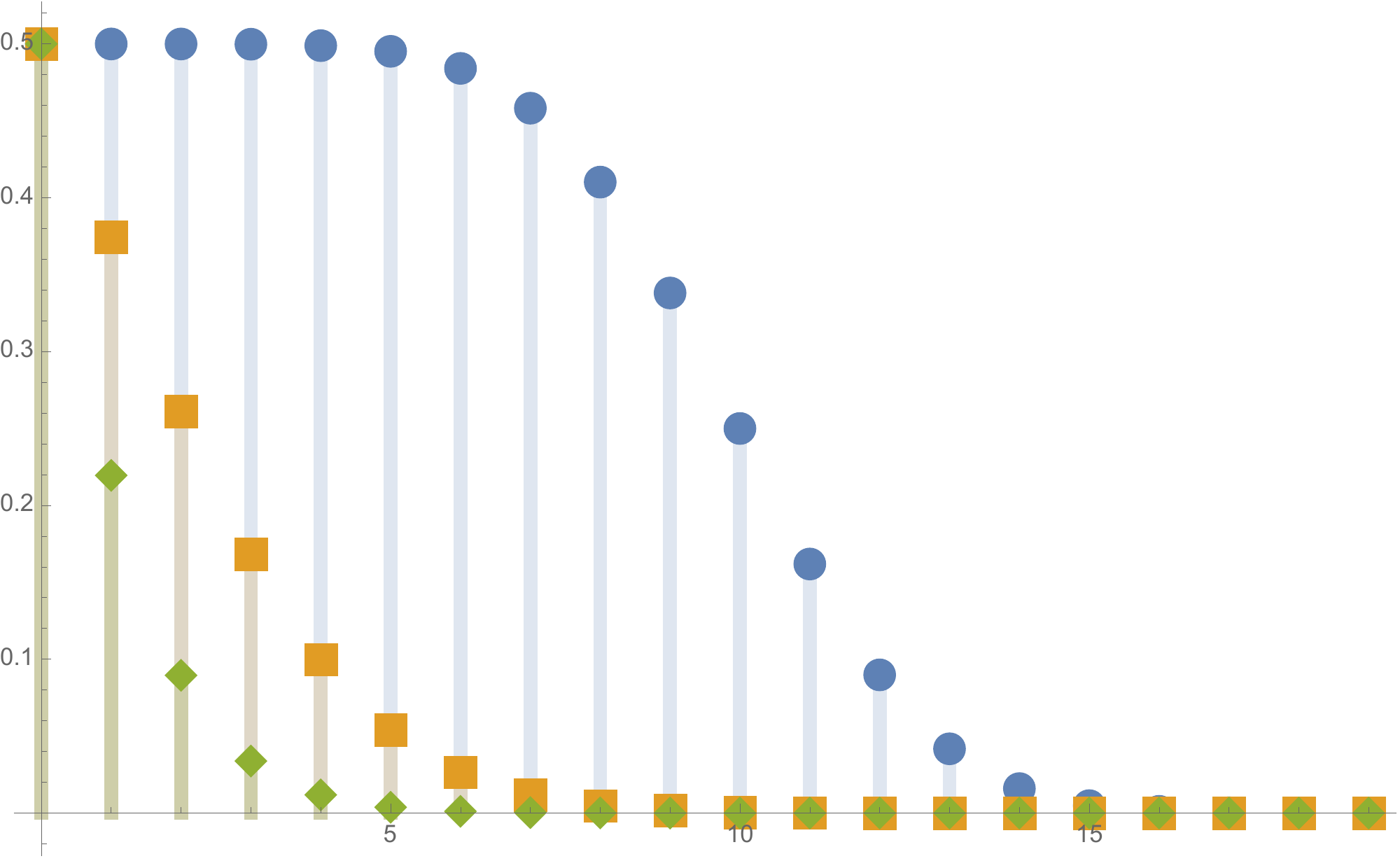}\quad
	\includegraphics[width=0.45\columnwidth]{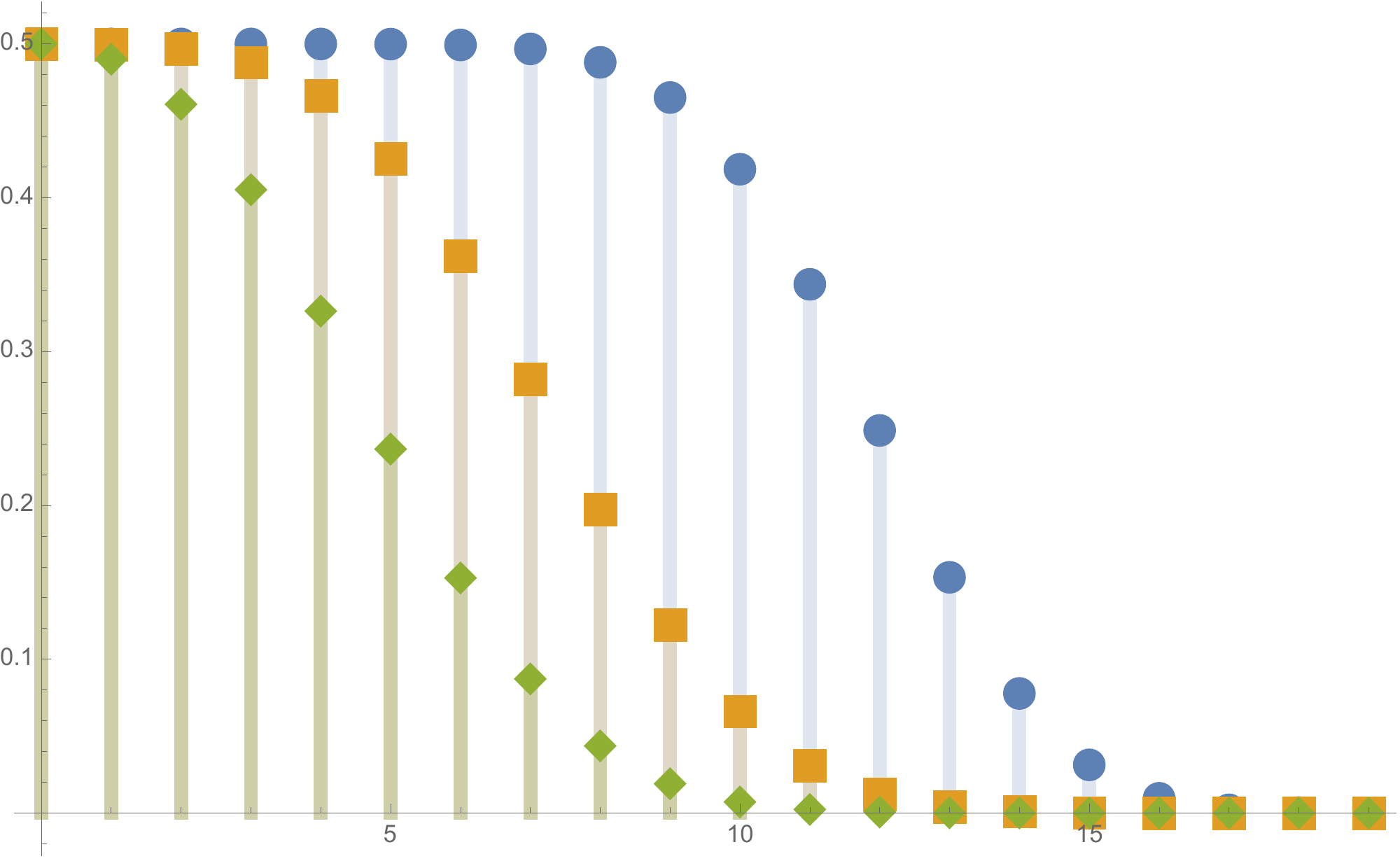}
	\caption{The expected spherical Querma\ss integrals $\bE U_\ell(\,\cdot\,)$ of $Z_{n,d}$ (left) and $W_{n,d}$ (right) as a function of $\ell$ for $d=19$ and $n=20$ (blue dots), $n=40$ (orange squares) and $n=60$ (green diamonds).}
	\label{fig:UTypicalWeighted}
\end{figure}

\begin{corollary}\label{cor:U}
Let $d\geq 1$, $k\in\{0,1,\ldots,d\}$ and consider a great hypersphere tessellation of $\SS^d$ with non-degenerate directional distribution $\kappa$ and intensity $n\geq d-k$. For $\ell\in\{0,1,\ldots,k\}$ one has that
\begin{align}\label{eq:UTypical}
\bE U_\ell(Z_{n,d}^{(k)}) &= {C(n-d+k,k-\ell)\over 2C(n-d+k,k)}.
\end{align}
In the isotropic case, that is, if $\kappa$ is the uniform distribution on $\GG_s(d,d-1)$, and if $n\geq d+1$ then also
\begin{align*}
\bE U_\ell(W_{n,d}^{(k)})&={(n-d+k)!\pi^{d-n-k}\over 2}\\
&\qquad\times\sum_{s=0}^{\lfloor {k-\ell\over 2}\rfloor}B\{n+\ell-d+k,k-2s\}(k-2s-1)^2A[k-2s-2,\ell-2].
\end{align*}
\end{corollary}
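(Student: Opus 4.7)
The plan is to treat the two statements separately. The formula for $\bE U_\ell(Z_{n,d}^{(k)})$ will follow from Theorem~\ref{thm:TypicalIntersection} together with the Cover--Efron formula for the expected spherical Querma\ss integrals of the Schl\"afli random cone, while the isotropic formula for $\bE U_\ell(W_{n,d}^{(k)})$ will follow by inverting the Efron-type identity of Theorem~\ref{thm:Efron} and then substituting the explicit $f$-vector formula of Theorem~\ref{thm:fVector}.

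\textbf{Typical face.} First I would apply Theorem~\ref{thm:TypicalIntersection} to the rotation-invariant function $h=U_\ell$, yielding
$$
\bE U_\ell(Z_{n,d}^{(k)}) = \int_{\GG_s(d,k)}\bE U_\ell(Z_{n-d+k,S})\,\kappa_k(\dint S).
$$
For $\kappa_k$-almost every $S\in\GG_s(d,k)$, the sectional tessellation $T_{n-d+k,S}$ is a great hypersphere tessellation of the $k$-dimensional great subsphere $S$, driven by $n-d+k$ great hyperspheres of $S$ in general position, so that $Z_{n-d+k,S}$ is the typical cell of a great hypersphere tessellation of $\SS^k$ with intensity $n-d+k$. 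The Cover--Efron type formula (stated for $k=d$ in \cite{HugSchneiderConicalTessellations} and valid for any non-degenerate directional distribution, as it depends only on the almost sure combinatorial structure) then gives
$$
\bE U_\ell(Z_{n-d+k,S}) = \frac{C(n-d+k,k-\ell)}{2\,C(n-d+k,k)},
$$
independently of $S$. Integration against the probability measure $\kappa_k$ yields~\eqref{eq:UTypical}.

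\textbf{Weighted typical face.} Here I would use Theorem~\ref{thm:Efron} in reverse: replacing $n$ by $n+\ell$ in the identity gives
$$
\bE U_\ell(W_{n,d}^{(k)}) = \frac{\bE f_{k-\ell}(W_{n+\ell,d}^{(k)})}{2\binom{n+\ell-d+k}{\ell}},
$$
which is the identity I would use for $\ell\in\{1,\ldots,k\}$ (the boundary case $\ell=0$ is immediate since $U_0\equiv 1/2$ almost surely). I would then substitute the expression for $\bE f_{k-\ell}(W_{n+\ell,d}^{(k)})$ obtained from Theorem~\ref{thm:fVector} by replacing $n$ by $n+\ell$ and the face-dimension variable by $k-\ell$. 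The prefactor $(n+\ell-d+k)!\,\pi^{d-k-n}/\ell!$ cancels against the binomial coefficient $2\binom{n+\ell-d+k}{\ell}=2(n+\ell-d+k)!/(\ell!(n-d+k)!)$ to produce the factor $(n-d+k)!\,\pi^{d-n-k}/2$; the summation range becomes $s=0,\ldots,\lfloor (k-\ell)/2\rfloor$; the coefficient $B\{n-d+k,k-2s\}$ becomes $B\{n+\ell-d+k,k-2s\}$; and the entry $A[k-2s-2,k-(k-\ell)-2]=A[k-2s-2,\ell-2]$ appears, reproducing the claimed formula exactly.

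The proof is essentially routine parameter bookkeeping once the two source identities are in place. The only non-cosmetic point to verify is that the Cover--Efron formula for $\bE U_\ell$ of the typical cell of a great hypersphere tessellation holds for an arbitrary non-degenerate directional distribution (not only for the isotropic one), since the within-$S$ directional distribution inherited by $T_{n-d+k,S}$ need not be uniform; this is the main potential obstacle, but it is dispatched by the distribution-free character of the Cover--Efron relations, which only rely on the general-position property of the generating hyperspheres.
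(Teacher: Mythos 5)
Your proposal is correct and follows essentially the same route as the paper: the weighted case is handled identically (inverting Theorem \ref{thm:Efron} with $n\mapsto n+\ell$ and substituting Theorem \ref{thm:fVector}, with the same bookkeeping of the prefactor and indices), and the typical case reduces to the $k$-dimensional sectional tessellation with $n-d+k$ great hyperspheres and the distribution-free Cover--Efron/Hug--Schneider formula. The only (cosmetic) difference is that you invoke Theorem \ref{thm:TypicalIntersection} directly rather than the isotropic Corollary \ref{cor:TypicalIso} used in the paper, which in fact matches the stated generality of \eqref{eq:UTypical} for arbitrary non-degenerate $\kappa$ more cleanly.
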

\begin{proof}
The result for the typical spherical $k$-face $Z_{n,d}^{(k)}$ can be concluded by combining Corollary \ref{cor:TypicalIso} with \cite[Corollary 4.2]{HugSchneiderConicalTessellations}. In fact,
\begin{align*}
\bE U_\ell(Z_{n,d}^{(k)}) &= \bE U_\ell(Z_{n-d+k,k}) = {C(n-d+k,k-\ell)\over 2C(n-d+k,k)}
\end{align*}
For the weighted typical spherical $k$-face $W_{n,d}^{(k)}$ we use Theorem \ref{thm:Efron} and Theorem \ref{thm:fVector} and obtain
\begin{align*}
\bE U_\ell(W_{n,d}^{(k)}) &= {1\over 2}{n-d+k+\ell\choose\ell}^{-1}\bE f_{k-\ell}(W_{n+\ell,d}^{(k)})\\
&={1\over 2}{n-d+k+\ell\choose\ell}^{-1}{(n+\ell-d+k)!\pi^{d-n-k}\over\ell!}\\
&\qquad\times\sum_{s=0}^{\lfloor {k-\ell\over 2}\rfloor}B\{n+\ell-d+k,k-2s\}(k-2s-1)^2A[k-2s-2,\ell-2]\\
&={(n-d+k)!\pi^{d-n-k}\over 2}\\
&\qquad\times\sum_{s=0}^{\lfloor {k-\ell\over 2}\rfloor}B\{n+\ell-d+k,k-2s\}(k-2s-1)^2A[k-2s-2,\ell-2].
\end{align*}
The proof is thus complete.
\end{proof}

Given the relation between the expected number of vertices and the expected $k$th spherical Querma\ss integral of $W_{n,d}^{(k)}$, we are now prepared to give a proof of Proposition \ref{thm:MonotoneF0}.

\begin{proof}[Proof of Proposition \ref{thm:MonotoneF0}]
	Since the equality for $n=d+1$ is clear, we concentrate on the case that $n\geq d+2$.
	
	Since $W_{n,d}^{(k)}$ is the size ($\cH^k$-) biased version of $Z_{n,d}^{(k)}$, $\cH^k(W_{n,d}^{(k)})$ has the size biased distribution of $\cH^k(Z_{n,d}^{(k)})$ and we thus have the stochastic monotonicity
	$$
	\bP(\cH^k(W_{n,d}^{(k)})\leq x) \leq \bP(\cH^k(Z_{n,d}^{(k)})\leq x)
	$$
	for all $0\leq x\leq\omega_{k+1}$, see \cite[Section 2.2.4]{ArratiaGoldsteinKochman}. In particular, this implies monotonicity for all moments of $\cH^k(W_{n,d}^{(k)})$ and $\cH^k(Z_{n,d}^{(k)})$. Especially
	\begin{align}\label{eq:Mono1}
	\bE U_k(W_{n,d}^{(k)}) \geq \bE U_k(Z_{n,d}^{(k)}),
	\end{align}
	where we used additionally the definition of $U_k$ and its relation to $\cH^k$. Next, the Efron-type identity in Theorem \ref{thm:Efron} yields that
	\begin{align}\label{eq:Mono2}
	\bE U_k(W_{n,d}^{(k)}) = {1\over 2}{n-d+2k\choose k}^{-1} \bE f_0(W_{n+k,d}^{(k)}).
	\end{align}
	Moreover, combining Theorem \ref{thm:fvectortypical} with Corollary \ref{cor:U} leads to
	\begin{align}\label{eq:Mono3}
	\bE U_k(Z_{n,d}^{(k)}) = {C(n-d+2k,k)\over 2^{k+1}{n-d+2k\choose k}C(n-d+k,k)}\bE f_0(Z_{n+k,d}^{(k)}).
	\end{align}
	Plugging \eqref{eq:Mono2} and \eqref{eq:Mono3} into \eqref{eq:Mono1} we arrive at
	\begin{align*}
	\bE f_0(W_{n+k,d}^{(k)}) \geq {C(n-d+2k,k)\over 2^k C(n-d+k,k)}\bE f_0(Z_{n+k,d}^{(k)}),
	\end{align*}
	which is equivalent to
	\begin{align*}
	\bE f_0(W_{n,d}^{(k)}) \geq {C(n-d+k,k)\over 2^k C(n-d,k)}\bE f_0(Z_{n,d}^{(k)}).
	\end{align*}
	This completes the argument.
\end{proof}

In the context of Proposition \ref{thm:MonotoneF0} we also note the following implication, which shows that an inequality between the expected number of $(k-\ell)$-dimensional spherical faces of $W_{n+\ell,d}^{(k)}$ and $Z_{n+\ell,d}^{(k)}$ is stronger than the corresponding inequality for the expected spherical Querma\ss integrals of order $\ell$.

\begin{proposition}\label{prop:MonoUMonof}
	Let $d\geq 1$, $k\in\{1,\ldots,d\}$, $\ell\in\{1,\ldots,k\}$ and consider an isotropic great hypersphere tessellation of $\SS^d$ with intensity $n\geq d+1$. Then
	$$
	\bE f_{k-\ell}(W_{n+\ell,d}^{(k)})\geq \bE f_{k-\ell}(Z_{n+\ell,d}^{(k)})\quad\text{implies that}\quad\bE U_\ell(W_{n,d}^{(k)})\geq \bE U_\ell(Z_{n,d}^{(k)}).
	$$
\end{proposition}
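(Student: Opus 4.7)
The strategy is to convert the assumed inequality on $f$-vectors into an inequality on expected spherical Querma\ss integrals by applying the Efron-type identity from Theorem \ref{thm:Efron} to the weighted side, and combining Theorem \ref{thm:fvectortypical} with Corollary \ref{cor:U} on the typical side. After this conversion, a combinatorial inequality involving the Schl\"afli numbers $C(\,\cdot\,,\,\cdot\,)$ remains, which is established using the Vandermonde convolution.

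First, replacing $n$ by $n+\ell$ in Theorem \ref{thm:Efron} yields
$$
\bE f_{k-\ell}(W_{n+\ell,d}^{(k)}) = 2\binom{n+\ell-d+k}{\ell}\bE U_\ell(W_{n,d}^{(k)}).
$$
On the other hand, Theorem \ref{thm:fvectortypical} (applied with $n\mapsto n+\ell$ and the formula-index $\ell\mapsto k-\ell$) together with Corollary \ref{cor:U} gives
$$
\bE f_{k-\ell}(Z_{n+\ell,d}^{(k)}) = \frac{2^{\ell}\binom{n+\ell-d+k}{\ell}C(n-d+k,k-\ell)}{C(n+\ell-d+k,k)} = \frac{2^{\ell+1}\binom{n+\ell-d+k}{\ell}C(n-d+k,k)}{C(n+\ell-d+k,k)}\,\bE U_\ell(Z_{n,d}^{(k)}),
$$
using $C(n-d+k,k-\ell)=2C(n-d+k,k)\bE U_\ell(Z_{n,d}^{(k)})$ from Corollary \ref{cor:U}. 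Substituting both identities into the hypothesis and cancelling the common factor $2\binom{n+\ell-d+k}{\ell}$ reduces it to
$$
\bE U_\ell(W_{n,d}^{(k)}) \;\geq\; \frac{2^{\ell}C(n-d+k,k)}{C(n+\ell-d+k,k)}\,\bE U_\ell(Z_{n,d}^{(k)}).
$$

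To finish, it suffices to show that the prefactor is at least $1$, i.e.\
$$
2^{\ell}C(n-d+k,k) \;\geq\; C(n+\ell-d+k,k).
$$
Writing $m:=n-d+k$ and using the Vandermonde identity $\binom{m+\ell-1}{r}=\sum_{j=0}^{r}\binom{m-1}{j}\binom{\ell}{r-j}$, one obtains
$$
\sum_{r=0}^{k}\binom{m+\ell-1}{r} = \sum_{j=0}^{k}\binom{m-1}{j}\sum_{i=0}^{k-j}\binom{\ell}{i} \;\leq\; 2^{\ell}\sum_{j=0}^{k}\binom{m-1}{j},
$$
since $\sum_{i=0}^{k-j}\binom{\ell}{i}\leq 2^{\ell}$. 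Multiplying by $2$ and recalling \eqref{eq:Cnd} yields $C(m+\ell,k)\leq 2^{\ell}C(m,k)$, which is the desired bound. The main (and only non-routine) obstacle is this combinatorial estimate; once it is secured, everything else is a straightforward substitution.
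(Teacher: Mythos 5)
Your proof is correct and follows essentially the same route as the paper: apply the Efron-type identity (Theorem \ref{thm:Efron}) with $n$ replaced by $n+\ell$ on the weighted side, combine Theorem \ref{thm:fvectortypical} with Corollary \ref{cor:U} on the typical side, cancel the common factor $2\binom{n+\ell-d+k}{\ell}$, and reduce everything to the prefactor inequality $2^{\ell}C(n-d+k,k)\geq C(n-d+k+\ell,k)$. The only point of divergence is the last step: the paper disposes of this inequality by appealing to ``the geometric interpretation of $C(n,d)$'' (each additional great hypersphere at most doubles the number of cells, so $C(m+1,k)\leq 2C(m,k)$ and one iterates), and in fact the inequality it writes down there, $C(n-d+k,k)\leq C(n-d+k+\ell,k)$, is not the one actually needed --- it points in the wrong direction and appears to be a typo. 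Your Vandermonde computation $\sum_{r=0}^{k}\binom{m+\ell-1}{r}=\sum_{j=0}^{k}\binom{m-1}{j}\sum_{i=0}^{k-j}\binom{\ell}{i}\leq 2^{\ell}\sum_{j=0}^{k}\binom{m-1}{j}$ is a clean and fully explicit proof of the correct inequality, so your write-up is, if anything, more careful than the paper's at precisely the one non-routine point.
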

\begin{proof}
	First, we start by noting that Theorem \ref{thm:Efron} yields that
	$$
	\bE U_\ell(W_{n,d}^{(k)}) = {1\over 2}{n-d+k+\ell\choose\ell}^{-1}\bE f_{k-\ell}(W_{n+\ell,d}^{(k)}).
	$$
	Similarly, combining Theorem \ref{thm:fvectortypical} with Corollary \ref{cor:U} we conclude that
	$$
	\bE U_\ell(Z_{n,d}^{(k)}) = {C(n-d+k+\ell)\over 2^{\ell+1}C(n-d+k,k)}{n-d+k+\ell\choose\ell}^{-1}\bE f_{k-\ell}(Z_{n+\ell,d}^{(k)}).
	$$
	Using these two identities, our assumption that $\bE f_{k-\ell}(W_{n+\ell,d}^{(k)})\geq \bE f_{k-\ell}(Z_{n+\ell,d}^{(k)})$ is equivalent to
	$$
	2{n-d+k+\ell\choose\ell}\bE U_\ell(W_{n,d}^{(k)}) \geq {2^{\ell+1}C(n-d+k,k)\over C(n-d+k+\ell,k)}{n-d+k+\ell\choose\ell}\bE U_\ell(Z_{n,d}^{(k)}),
	$$
	which in turn can be rewritten as
	$$
	\bE U_\ell(W_{n,d}^{(k)}) \geq {2^{\ell}C(n-d+k,k)\over C(n-d+k+\ell,k)}\bE U_\ell(Z_{n,d}^{(k)}).
	$$
	From the geometric interpretation of the constant $C(n,d)$ (recall \eqref{eq:Cnd}), it is evident that $C(n-d+k,k)\leq C(n-d+k+\ell,k)$, which implies the result.
\end{proof}

\begin{remark}
	The proof of Proposition \ref{prop:MonoUMonof} actually shows that under the same conditions $\bE f_{k-\ell}(W_{n+\ell,d}^{(k)})\geq \bE f_{k-\ell}(Z_{n+\ell,d}^{(k)})$ implies the  inequality $\bE U_\ell(W_{n,d}^{(k)})\geq {2^{\ell}C(n-d+k,k)\over C(n-d+k+\ell,k)} \bE U_\ell(Z_{n,d}^{(k)})$.
\end{remark}

\subsection{Spherical intrinsic volumes}

From the two formulas presented in the previous section for the spherical Querma\ss integrals also the expected so-called spherical intrinsic volumes of $W_{n,d}^{(k)}$ and $Z_{n,d}^{(k)}$ can be determined explicitly. To introduce them, for a spherical convex set $K\in\KK_s(d)$ and $0\leq r<\pi/2$ we define the $r$-parallel set $K_r=\{x\in\SS^{d}:0<d_g(K,x)\leq r\}$, where $d_g(\,\cdot\,,\,\cdot\,)$ denotes the geodesic distance on $\SS^{d}$ and $d_g(K,x)=\min\{d_g(y,x):y\in K\}$ stands for the geodesic distance of $x$ to $K$. The spherical Steiner formula (a special case of \cite[Theorem 6.5.1]{SW}) says that
$$
\cH^{d}(K_r) = \sum_{\ell=0}^{d-1}v_\ell(K)\,\omega_{\ell+1}\omega_{d-\ell}\int_0^r \cos^\ell\varphi\sin^{d-\ell-1}\varphi\,\dint\varphi.
$$
The coefficients $v_0(K),v_1(K),\ldots,v_{d-1}(K)$ are the \textit{spherical intrinsic volumes} of $K$ and it is convenient to complement them by putting $v_{d}(K)=\omega_{d+1}^{-1}\cH^{d}(K)$ and $v_{-1}(K)=v_d(K^\circ)$. However, since $\sum_{i=-1}^dv_i(K)=1$ by \cite[Theorem 6.5.5]{SW}, $v_{-1}(K)$ is determined once $v_0(K),v_1(K),\ldots,v_d(K)$ are known. If $P\in\PP_s(d)$ is a spherical polytope the spherical intrinsic volumes admit the representation
\begin{align}\label{eq:SphIntVolPolytope}
	v_\ell(P) = {1\over\omega_{\ell+1}}\sum_{F\in\cF_{\ell}(P)}\cH^\ell(F)\gamma(F,P),\qquad\ell\in\{0,1,\ldots,d-1\},
\end{align}
where $\gamma(F,P)$ stands for the external angle of $P$ at $F$, see \cite[page 250]{SW}.

We remark that the spherical intrinsic volumes are closely related to the notion of \textit{conical intrinsic volumes} $\cv_0(C),\cv_1(C),\ldots,\cv_{d+1}(C)$ associated with a convex cone $C\subset\RR^{d+1}$. In fact, one has that
\begin{align}\label{eq:RelationVcoV}
v_\ell(C\cap\SSd) = \cv_{\ell+1}(C)\qquad\text{for}\qquad \ell\in\{0,1,\ldots,d\},
\end{align}
see \cite{AmelunxenLotzMcCoyTropp,McCoyTropp}. In this paper we prefer to work with the spherical intrinsic volumes.

The spherical (or conical) intrinsic volumes are related to the spherical Querma\ss integrals $U_\ell(K)$ by
\begin{align}\label{eq:RelationUV}
v_{d}(K) = U_{d}(K),\qquad v_{d-1}(K) = U_{d-1}(K),\qquad v_\ell(K) = U_{\ell}(K)-U_{\ell+2}(K)
\end{align}
for $\ell\in\{0,1,\ldots,d-2\}$, see \cite[Equation (7)]{HugSchneiderConicalTessellations}. This leads to the following formulas of which the first one is known from \cite[Corollary 4.3]{HugSchneiderConicalTessellations} for $k=d$, the second one is new even in this case. Some particular values for small $d$ and $n$ are collected in Appendix \ref{app:IntVol}, see also Figure \ref{fig:VTypicalWeighted}.

\begin{figure}
\centering
\includegraphics[width=0.45\columnwidth]{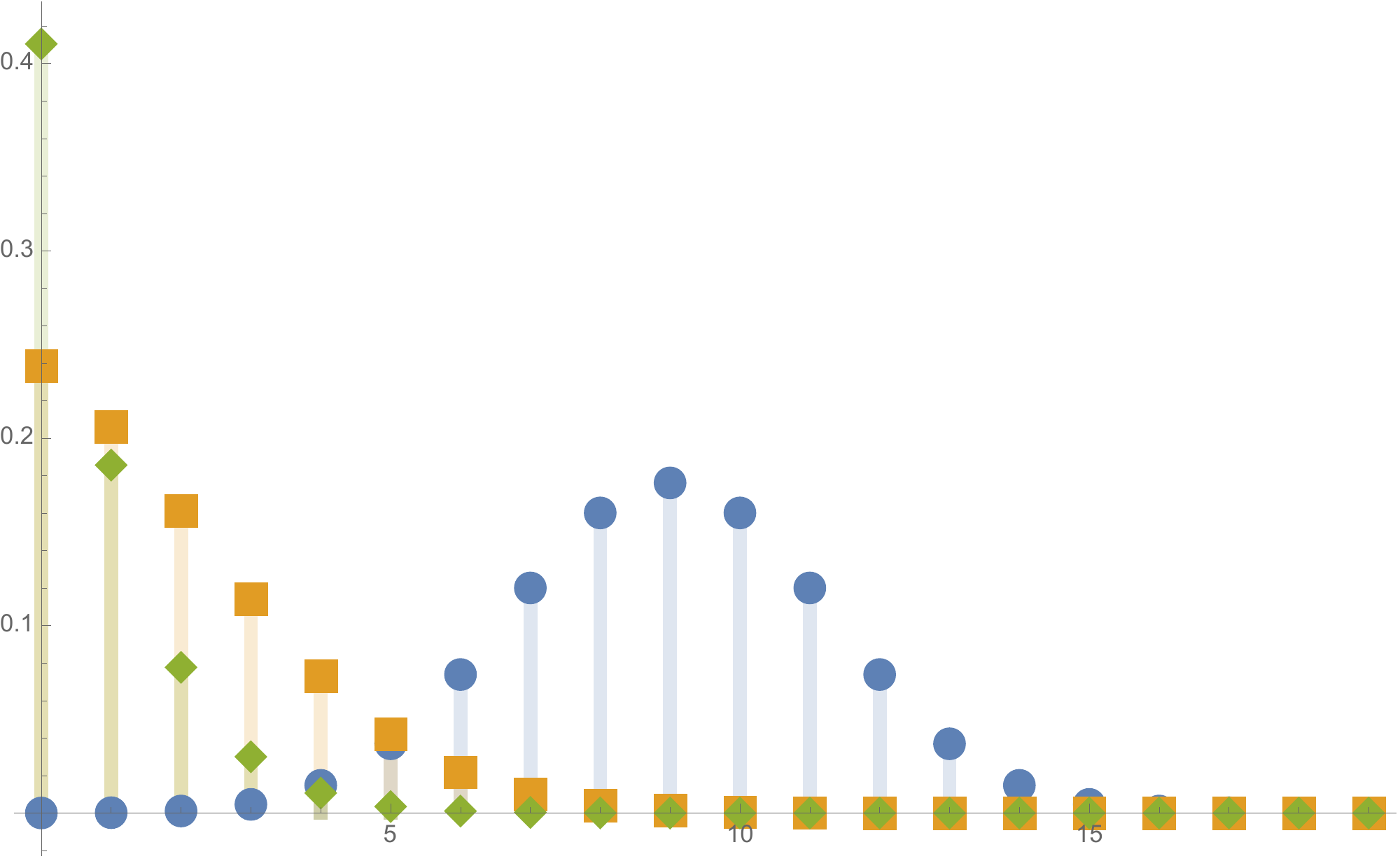}\quad
\includegraphics[width=0.45\columnwidth]{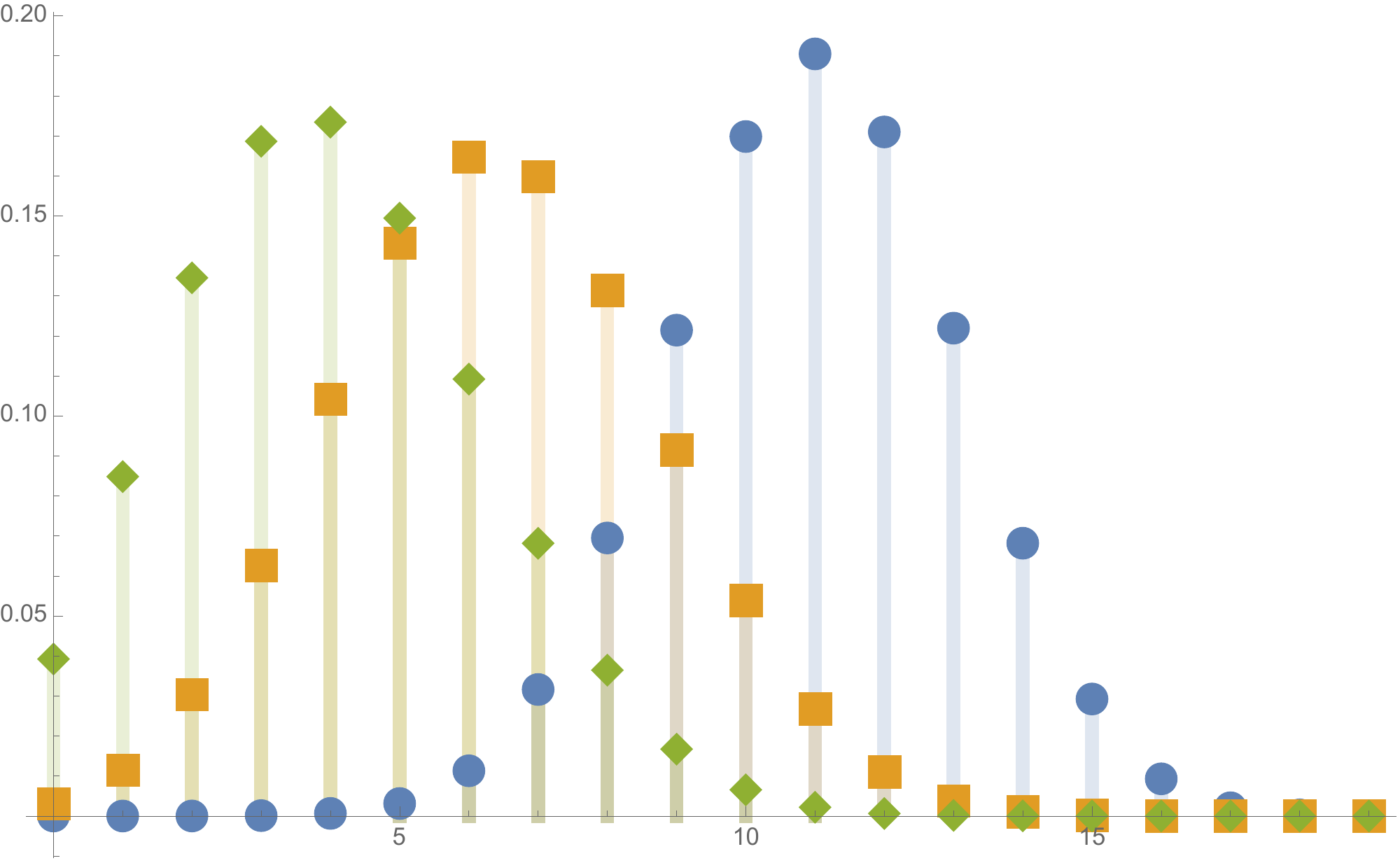}
\caption{The expected spherical intrinsic volumes $\bE v_\ell(\,\cdot\,)$ of $Z_{n,d}$ (left) and $W_{n,d}$ (right) as a function of $\ell$ for $d=19$ and $n=20$ (blue dots), $n=40$ (orange squares) and $n=60$ (green diamonds).}
\label{fig:VTypicalWeighted}
\end{figure}

\begin{corollary}\label{cor:IntVol}
Let $d\geq 1$, $k\in\{0,1,\ldots,d\}$ and consider a great hypersphere tessellation of $\SS^d$ with non-degenerate directional distribution $\kappa$ and intensity $n\geq d-k$. For $\ell\in\{0,1,\ldots,k\}$ one has that
$$
\bE v_\ell(Z_{n,d}^{(k)}) = {{n-d+k\choose k-\ell}\over C(n-d+k,k)}.
$$
In the isotropic case, that is, if $\kappa$ is the uniform distribution on $\GG_s(d,d-1)$, and if $n\geq d+1$ and $\ell\in\{0,1,\ldots,k\}$ then also
$$
\bE v_\ell(W_{n,d}^{(k)}) = {(n-d+k)!\over 2\pi^{n-d+k}}\,B\{n-d+k+\ell,k\}A[k,\ell].
$$
\end{corollary}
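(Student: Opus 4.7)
The plan is to derive both formulas from the pointwise identity $v_\ell(K) = U_\ell(K) - U_{\ell+2}(K)$, which is \eqref{eq:RelationUV} read with the convention $U_j(K) = 0$ whenever $j$ exceeds the dimension of the smallest great subsphere containing $K$. In particular, for any $k$-dimensional spherical polytope $P$ this covers the boundary cases $v_k(P) = U_k(P)$ and $v_{k-1}(P) = U_{k-1}(P)$.

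For the typical $k$-face, taking expectations and invoking Corollary~\ref{cor:U} immediately gives
\[
\bE v_\ell(Z_{n,d}^{(k)}) = \frac{C(n-d+k,k-\ell) - C(n-d+k,k-\ell-2)}{2\,C(n-d+k,k)}
\]
for $\ell \in \{0,1,\ldots,k\}$, with the convention $C(m,j) = 0$ for $j < 0$. Expanding $C(m,j) = 2\sum_{r=0}^{j}\binom{m-1}{r}$ and applying Pascal's identity, the numerator collapses to $2\binom{n-d+k-1}{k-\ell-1} + 2\binom{n-d+k-1}{k-\ell} = 2\binom{n-d+k}{k-\ell}$, producing the stated formula; the boundary values $\ell = k-1, k$ fall out of the same computation using $C(m,0) = 2$ and $C(m,1) = 2m$.

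For the weighted typical $k$-face in the isotropic case, I would first apply Corollary~\ref{cor:CellSection} to the rotation-invariant functional $v_\ell$ to reduce to $\bE v_\ell(W_{n-d+k,k})$. Following the polarity argument from the proof of Theorem~\ref{thm:fVector}, the cell $W_{n-d+k,k}$ is spherically polar to the half-sphere random polytope $D_{n-d+k,k}$, and the duality $\cv_i(C) = \cv_{k+1-i}(C^\circ)$ for conical intrinsic volumes combined with \eqref{eq:RelationVcoV} yields $v_\ell(K^\circ) = v_{k-\ell-1}(K)$ for $K \subset \SS^k$. Hence
\[
\bE v_\ell(W_{n-d+k,k}) = \bE v_{k-\ell-1}(D_{n-d+k,k})
\]
for $\ell \in \{0,\ldots,k-1\}$, and the closed-form expression for the expected spherical intrinsic volumes of $D_{n,k}$ from \cite{kabluchko_poisson_zero} (the intrinsic-volume counterpart of the $f$-vector formula used in the proof of Theorem~\ref{thm:fVector}) produces the claim after reindexing $n \leadsto n-d+k$ and $m = k-\ell-1$. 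The remaining case $\ell = k$, for which the polarity identity is vacuous, is treated separately via $v_k(W_{n,d}^{(k)}) = U_k(W_{n,d}^{(k)})$ and Corollary~\ref{cor:U}; agreement with the target formula then reduces to the identity $A[k,k] = (k-1)^2 A[k-2,k-2]$, which follows by comparing leading coefficients in the recursion $Q_k(x) = (1 + (k-1)^2 x^2)\,Q_{k-2}(x)$.

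The main obstacle is the apparent mismatch between the compact single-product form $B\{n-d+k+\ell,k\}\,A[k,\ell]$ of the target and the multi-term sums appearing in Corollary~\ref{cor:U}. The polarity route bypasses this by importing the already-derived single-term formula for $\bE v_m(D_{n,k})$ from \cite{kabluchko_poisson_zero}. An attempt to combine the sums for $\bE U_\ell$ and $\bE U_{\ell+2}$ directly would instead demand a non-trivial telescoping identity for the coefficients $A[\cdot,\cdot]$ and $B\{\cdot,\cdot\}$, making the indirect approach via polarity and dimensional reduction the cleaner path.
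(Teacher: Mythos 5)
Your derivation of $\bE v_\ell(Z_{n,d}^{(k)})$ is correct and is essentially the paper's own argument: the relation $v_\ell=U_\ell-U_{\ell+2}$ (with the vanishing convention beyond the dimension of the face), the first formula of Corollary \ref{cor:U}, and Pascal's identity. The paper merely carries this out for $k=d$ first and then invokes Corollary \ref{cor:TypicalIso}; the content is the same.

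For the weighted face the situation is different. Your polarity reduction is sound: Corollary \ref{cor:CellSection} reduces to $\bE v_\ell(W_{n-d+k,k})$, and the duality $\cv_j(C)=\cv_{k+1-j}(C^\circ)$ together with \eqref{eq:RelationVcoV} indeed gives $v_\ell(K^\circ)=v_{k-\ell-1}(K)$, consistent with the paper's convention $v_{-1}(K)=v_k(K^\circ)$. The gap is the central import: the ``closed-form expression for the expected spherical intrinsic volumes of $D_{n,k}$'' is never identified, and it is not among the results of \cite{kabluchko_poisson_zero} that this paper draws on. The only expected intrinsic volume of $D_{n,k}$ quoted from that reference (Theorem~2.5, for the solid angle $v_k(D_{n,k})$ --- precisely the index your duality excludes) is a multi-term sum rather than a single product, so the single-product formula you need for $m\in\{0,\ldots,k-1\}$ cannot simply be presumed; you would have to locate it precisely or prove it, and the natural proof is exactly the computation you set out to avoid. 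That computation is, moreover, not the obstacle you take it to be: after the same reduction to $k=d$, the paper writes $\bE v_\ell(W_{n,d})=\bE U_\ell(W_{n,d})-\bE U_{\ell+2}(W_{n,d})$ and applies the two recurrences $A[n+2,k]-A[n,k]=(n+1)^2A[n,k-2]$ and $B\{n,k-2\}-B\{n,k\}=(k-1)^2B\{n+2,k\}$ --- equations (1.8) and (1.10) of \cite{kabluchko_poisson_zero}, reproduced as \eqref{eq:RecurrenceA} and \eqref{eq:RecurrenceB} --- to rewrite each sum, whereupon the difference telescopes to the single term ${n!\over 2\pi^n}B\{n+\ell,d\}A[d,\ell]$. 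Your boundary case $\ell=k$ is fine (the identity $A[k,k]=(k-1)^2A[k-2,k-2]$ follows at once from \eqref{eq:RecurrenceA} since $A[k-2,k]=0$), but it does not rescue the cases $\ell<k$.
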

\begin{proof}
We apply twice Corollary \ref{cor:U} with $k=d$, and use \eqref{eq:RelationUV} and \eqref{eq:Cnd} to see that
\begin{align*}
\bE v_\ell(Z_{n,d}) &= \bE U_\ell(Z_{n,d})-\bE U_{\ell+2}(Z_{n,d})\\
&={C(n,d-\ell)-C(n,d-\ell-2)\over 2C(n,d)}\\
&={{n-1\choose d-\ell}+{n-1\choose d-\ell-1}\over C(n,d)} = {{n\choose d-\ell}\over C(n,d)}
\end{align*}
(we use here that formally $\bE U_{d+1}(Z_{n,d})=\bE U_{d+2}(Z_{n,d})=0$). Moreover, from Corollary \ref{cor:TypicalIso} we conclude that, for $k\in\{0,1,\ldots,d\}$,
$$
\bE v_\ell(Z_{n,d}^{(k)}) = \bE v_\ell(Z_{n-d+k,k}) = {{n-d+k\choose k-\ell}\over C(n-d+k,k)}.
$$
For the case of weighted faces, we first choose again $k=d$ and make use of the following two recurrence relations from \cite[Equations (1.8) and (1.10)]{kabluchko_poisson_zero}:
\begin{eqnarray}
A[n+2,k] - A[n,k]  &=& (n+1)^2A[n,k-2],\qquad\;\, n\geq 0,k\in\ZZ,\label{eq:RecurrenceA}\\
B\{n,k-2\}-B\{n,k\} &=& (k-1)^2B\{n+2,k\},\qquad n\geq 1,k\geq 2.\label{eq:RecurrenceB}
\end{eqnarray}
Using Corollary \ref{cor:U} and applying \eqref{eq:RecurrenceB} to $\bE U_\ell(W_{n,d})$ we first have that
\begin{align*}
\bE U_\ell(W_{n,d}) = {n!\over 2\pi^n}\sum_{s\geq 0}B\{n+\ell,d-2s\}(A[d-2s,\ell]-A[d-2s-2,\ell]),
\end{align*}
where we use here and below that summation over \textit{all} $s\geq 0$ is possible, since the other terms appearing in the representation for $\bE U_\ell(W_{n,d})$ and $\bE U_{\ell+2}(W_{n,d})$ in Corollary \ref{cor:U} vanish.
Applying now \eqref{eq:RecurrenceA} to $\bE U_{\ell+2}(W_{n,d})$  we obtain
\begin{align*}
\bE U_{\ell+2}(W_{n,d}) = {n!\over 2\pi^n}\sum_{s\geq 0}(B\{n+\ell,d-2s-2\}-B\{n+\ell,d-2s\})A[d-2s-2,\ell].
\end{align*}
Subtracting both expression from each other we get
\begin{align*}
\bE v_\ell(W_{n,d}) &= \bE U_\ell(W_{n,d}) - \bE U_{\ell+2}(W_{n,d})\\
&={n!\over 2\pi^n}\sum_{s\geq 0}\big(B\{n+\ell,d-2s\}A[d-2s,\ell]\\
&\hspace{5cm}-B\{n+\ell,d-2s-2\}A[d-2s-2,\ell]\big)\\
&={n!\over 2\pi^n}\,B\{n+\ell,d\}A[d,\ell].
\end{align*}
Finally, we make use of Corollary \ref{cor:CellSection} to conclude that
\begin{align*}
\bE v_\ell(W_{n,d}^{(k)}) &= \bE v_\ell(W_{n-d+k,k}) = {(n-d+k)!\over 2\pi^{n-d+k}}B\{n-d+k+\ell,k\}A[k,\ell].
\end{align*}
This completes the argument.
\end{proof}

\begin{remark}
Note that Corollary~\ref{cor:IntVol} does not provide a formula for $\bE v_{-1}(W_{n,d}^{(k)})$. To derive a formula for this quantity, observe first that $\bE v_{-1}(W_{n,d}^{(k)}) = \bE v_{-1}(W_{n-d+k,k})$ by Corollary~\ref{cor:CellSection}.
Up to rotations, $W_{n-d+k,k}$ can be identified with the north pole cell in the great hypersphere tessellation $T_{n-d+k,k}$. The polar spherical convex body of this cell can be identified with the spherical convex hull $D_{n-d+k,k}$ of $n-d+k$ random points sampled independently and uniformly on the $k$-dimensional lower half-sphere $\SS^k_-$. Since $v_{-1}(W_{n-d+k,k})$ is defined as $v_d$ of the polar convex body (or just as the solid angle of the corresponding cone), Theorem~2.5 of~\cite{kabluchko_poisson_zero} yields
\begin{align*}
\bE v_{-1}(W_{n,d}^{(k)}) 
&= 
\bE v_{-1}(W_{n-d+k,k}) \\
&= 
\frac{(n-d+k)!}{2\pi^{n-d+k}} \sum_{\substack{m\in \{k+2,\ldots,n-d+k+1\}\\m-k \text{ is even}}} B\{n-d+k+1,m\} (m-1)^2 A[m-2,-1]. 
\end{align*}
\end{remark}

\subsection{The Euclidean case as the limit for $n\to\infty$}

In this section we analyse the asymptotic behaviour of the (suitably rescaled) expected spherical intrinsic volumes of the typical and the weighted typical spherical $k$-face of a great hypersphere tessellation in $\SS^d$, as $n\to\infty$. It will turn out that the limits can be identified with the expected Euclidean intrinsic volumes of the typical and the weighted typical $k$-face of a Poisson hyperplane tessellation in the $d$-dimensional Euclidean space $\RR^d$. We denote by $V_0(K),V_1(K),\ldots,V_d(K)$ the Euclidean intrinsic volumes of a convex set $K\subset\RR^d$, which, similarly to the spherical case, may formally be defined as the coefficients of the Steiner formula, see \cite[Equation (14.5)]{SW}. In particular, if $P\subset\RR^d$ is a polytope, then
\begin{align}\label{eq:EuclIntVol}
V_\ell(P) = \sum_{F\in\cF_{\ell}(P)}\cH^\ell(F)\gamma(F,P),\qquad\ell\in\{0,1,\ldots,d\},
\end{align}
according to \cite[Equation (14.14)]{SW}, where, as in the spherical case, we use the symbol $\cF_{\ell}(P)$ for the set of $\ell$-dimensional face of $P$ and $\gamma(P,F)$ to denote the external angle of $P$ at $F$.

Consider a stationary and isotropic Poisson hyperplane tessellation in $\RR^d$, {$d\geq 1$}, with intensity $\gamma>0$ as in \cite[Chapter 10.3]{SW}. By ${Z}_{\gamma,d}^{(k)}$, $k\in\{0,1,\ldots,d\}$, we denote its typical $k$-face and by $W_{\gamma,d}^{(k)}$ its weighted typical $k$-face, where the weight is given by the $k$-dimensional Hausdorff measure. Formally, $Z_{\gamma,d}^{(k)}$ and $W_{\gamma,d}^{(k)}$ are defined by means of Palm distributions as in \cite{SchneiderWeightedFaces}. It is well known that
\begin{align}\label{eq:VTypicalEuclidean}
\bE V_\ell(Z_{\gamma,d}^{(k)}) = \Big({2\over\gamma}\Big)^\ell\bigg({\Gamma({d+1\over 2})\over\Gamma({d\over 2})}\bigg)^\ell\Gamma\Big({\ell\over 2}+1\Big){k\choose\ell}
\end{align}
for $\ell\in\{0,1,\ldots,k\}$, see \cite[p.\ 490]{SW}. Our next goal is to derive a similar formula for the weighted typical $k$-face, which has not been stated in the existing literature (but see \cite[Theorem 10.4.9]{SW} for the case $d=k=\ell$, which will turn out to be the ``simplest'' possible case).

\begin{theorem}\label{thm:VEuclidean}
For all $d\geq 1$, $k\in\{0,1,\ldots,d\}$,  $\ell\in\{0,1,\ldots,k\}$ and $\gamma>0$  the expected $\ell$-th Euclidean intrinsic volume of the weighted typical $k$-cell of the stationary and isotropic Poisson hyperplane tessellation in $\RR^d$ is given by
\begin{align}\label{eq:VWeightedEuclidean}
\bE V_\ell(W_{\gamma,d}^{(k)}) = \Big({2\pi\over\gamma}\Big)^\ell
\bigg({\Gamma({d+1\over 2})\over\Gamma({d\over 2})}\bigg)^\ell\,{\Gamma({\ell\over 2}+1)\over\ell!}A[k,\ell].
\end{align}
\end{theorem}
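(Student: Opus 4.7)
The formula \eqref{eq:VWeightedEuclidean} will be derived by passing to the Euclidean scaling limit in the spherical formula of Corollary \ref{cor:IntVol}. Locally near any fixed point, the isotropic great hypersphere tessellation $T_{n,d}$ on $\SS^d$ becomes, as $n\to\infty$, a stationary and isotropic Poisson hyperplane tessellation on the tangent space $\cong\RR^d$ whose intensity $\gamma(n)$ is identified by comparing the expected number of great hyperspheres passing within small geodesic distance $r$ from a reference point, namely $2nr\omega_d/\omega_{d+1}+O(r^2)$, with the analogous quantity $2\gamma r$ for a Poisson hyperplane process in $\RR^d$ at intensity $\gamma$. This gives $\gamma(n)=n\omega_d/\omega_{d+1}=n\Gamma((d+1)/2)/(\sqrt{\pi}\,\Gamma(d/2))$.

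For a spherical polytope $P\in\PP_s(d)$ of small diameter $\epsilon$, a direct comparison of the representations \eqref{eq:SphIntVolPolytope} and \eqref{eq:EuclIntVol} shows that $v_\ell(P)=\omega_{\ell+1}^{-1}V_\ell(\tilde P)+O(\epsilon^{\ell+2})$, where $\tilde P$ is the image of $P$ in the tangent space under the exponential map; indeed, external angles coincide exactly, while the spherical and Euclidean $\ell$-volumes of the faces match to leading order. Since $W_{n,d}^{(k)}$ has typical diameter of order $1/n$ and its tangent-space representation converges in distribution to $W_{\gamma(n),d}^{(k)}$, one obtains (subject to a uniform integrability argument)
\begin{equation*}
\omega_{\ell+1}\bE v_\ell(W_{n,d}^{(k)}) \;=\; \bE V_\ell(W_{\gamma(n),d}^{(k)})\,(1+o(1)) \;=\; \gamma(n)^{-\ell}\,D_{d,k,\ell}\,(1+o(1)),
\end{equation*}
where $D_{d,k,\ell}:=\gamma^\ell\bE V_\ell(W_{\gamma,d}^{(k)})$ is independent of $\gamma>0$ by scale invariance in $\RR^d$; hence $D_{d,k,\ell}=\lim_{n\to\infty}\gamma(n)^\ell\omega_{\ell+1}\bE v_\ell(W_{n,d}^{(k)})$.

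To compute this limit, the asymptotics $B\{m,k\}=\pi^m/m!\cdot(1+O(1/m))$ for large $m$ are needed. Substituting $x=\pi-y$ in the defining integral \eqref{eq:def_B} and noting that $(\pi-y)^{m-k}$ concentrates near $y=0$ where $\sin y\sim y$, the leading contribution $\int_0^\pi y^{k-1}(\pi-y)^{m-k}\dint y=\pi^m(k-1)!(m-k)!/m!$ follows from the beta integral. Inserting this into Corollary \ref{cor:IntVol} with $m=n-d+k+\ell$, all factorials and powers of $\pi$ telescope to yield $\lim_{n\to\infty}n^\ell\bE v_\ell(W_{n,d}^{(k)})=\pi^\ell A[k,\ell]/2$, so that $D_{d,k,\ell}=(\omega_d/\omega_{d+1})^\ell\pi^\ell\omega_{\ell+1}A[k,\ell]/2$. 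Substituting $\omega_{\ell+1}=2\pi^{(\ell+1)/2}/\Gamma((\ell+1)/2)$ together with the Legendre duplication identity $\Gamma((\ell+1)/2)\Gamma(\ell/2+1)=2^{-\ell}\sqrt{\pi}\,\ell!$ then recovers precisely \eqref{eq:VWeightedEuclidean} after rearrangement. The principal technical obstacle is the uniform integrability needed to exchange limit and expectation; this can alternatively be bypassed by a direct Euclidean derivation combining the known formulas for $\bE f_{k-\ell}(W_{\gamma,d}^{(k)})$ from \cite{SchneiderWeightedFaces} with a Euclidean Efron-type identity, in full analogy with the proof of Corollary \ref{cor:IntVol}.
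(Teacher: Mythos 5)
Your computations (the identification of $\gamma(n)=n\omega_d/\omega_{d+1}$, the asymptotics of $B\{m,k\}$, the telescoping to $\lim_{n\to\infty}n^\ell\bE v_\ell(W_{n,d}^{(k)})=\pi^\ell A[k,\ell]/2$, and the final Legendre-duplication rearrangement) are all correct and do reproduce \eqref{eq:VWeightedEuclidean}. But the load-bearing step of your argument, namely $\omega_{\ell+1}\bE v_\ell(W_{n,d}^{(k)})=\bE V_\ell(W_{\gamma(n),d}^{(k)})(1+o(1))$, is exactly the statement of Theorem \ref{thm:EuclidAsLimitSphere}, and in the paper that theorem is \emph{deduced from} Theorem \ref{thm:VEuclidean} by comparing the two explicit formulas — the logical order is the reverse of yours. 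To use it as input you would have to prove it independently, which requires (i) a rigorous local weak convergence of the rescaled weighted typical spherical $k$-face to the Euclidean Palm-defined $W_{\gamma,d}^{(k)}$ (the paper treats the analogous statement only as a heuristic, citing \cite{KabluchkoTemesvariThaeleCones} for the case of full-dimensional cells, and for $k<d$ one must additionally match the spherical skeleton-sampling construction with the Euclidean Palm construction), and (ii) uniform integrability of $n^\ell V_\ell$ of the rescaled faces to pass the limit through the expectation. You flag (ii) yourself as ``the principal technical obstacle,'' but neither (i) nor (ii) is supplied, and together they constitute the hard part of the argument; as written the proof has a genuine gap.

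The one-sentence alternative you mention at the end is in fact the paper's actual proof, and is the route you should take: \cite[Theorem 1]{SchneiderWeightedFaces} together with the Euclidean Efron-type identity from \cite[Section 5]{SchneiderWeightedFaces} gives $\bE f_{k-\ell}(W_{\gamma,d}^{(k)})=\kappa_\ell\big(\gamma\kappa_{d-1}/(d\kappa_d)\big)^\ell\bE V_\ell(W_{\gamma,d}^{(k)})$, while the explicit value $\bE f_{k-\ell}(W_{\gamma,d}^{(k)})=\pi^\ell A[k,\ell]/\ell!$ comes from \cite[Theorem 1.1]{kabluchko_poisson_zero} (not from \cite{SchneiderWeightedFaces}, as you write — Schneider's paper supplies the relation, not the explicit face numbers). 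Eliminating $\bE f_{k-\ell}$ between these two identities yields \eqref{eq:VWeightedEuclidean} with no limiting procedure at all; your scaling-limit reasoning is then better regarded as a consistency check, which is precisely the role it plays in the discussion following Theorem \ref{thm:EuclidAsLimitSphere}.
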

\begin{proof}
Using \cite[Theorem 1]{SchneiderWeightedFaces} together with the Efron-type identity for the weighted typical cell from \cite[Section 5]{SchneiderWeightedFaces} we conclude that
\begin{align}\label{eq:Poisson1}
\bE f_{k-\ell}(W_{\gamma,d}^{(k)}) = \kappa_\ell\Big({\gamma\kappa_{d-1}\over d\kappa_d}\Big)^\ell\bE V_\ell(W_{\gamma,d}^{(k)})
\end{align}
for $\ell\in\{0,1,\ldots,k\}$, where $\kappa_\ell={\pi^{\ell/2}\over\Gamma(1+\ell/2)}$ is the volume of the $\ell$-dimensional Euclidean unit ball. On the other hand, in \cite[Theorem 1.1]{kabluchko_poisson_zero} the values for $\bE f_{k-\ell}(W_{\gamma,d}^{(d)})$ have been computed explicitly in terms of the constants $A[m,\ell]$ defined at \eqref{eq:Aml}. In combination with \cite[Theorem 1]{SchneiderWeightedFaces}  again this leads to
\begin{align}\label{eq:Poisson2}
\bE f_{k-\ell}(W_{\gamma,d}^{(k)}) = {\pi^\ell\over\ell!}A[k,\ell].
\end{align}
Plugging \eqref{eq:Poisson2} into \eqref{eq:Poisson1} we conclude that
\begin{align*}
\bE V_\ell(W_{\gamma,d}^{(k)}) = \Big({d\kappa_d\over\gamma\kappa_{d-1}}\Big)^\ell{\pi^\ell\over\ell!\,\kappa_\ell}\,A[k,\ell].
\end{align*}
Using the definition of $\kappa_\ell$ and simplifying the resulting constants, the result follows.
\end{proof}

\begin{remark}
In the special case when $k=\ell=d$, taking into account that by \cite[Proposition 1.2]{kabluchko_poisson_zero}, $A[d,d]={(d!)^2\over 2^d\Gamma({d\over 2}+1)^2}$, one can easily verify that
$$
\bE V_{d}(W_{\gamma, d}^{(d)})=d!\kappa_d\Big({d\kappa_d\over 2\gamma\kappa_{d-1}}\Big)^d
$$
in accordance with \cite[Theorem 10.4.9]{SW}. It is also interesting to compare the result of Theorem \ref{thm:VEuclidean} with \eqref{eq:VTypicalEuclidean}.
\end{remark}

We can now present the announced limit relation for the expected spherical intrinsic volumes for the typical and the weighted typical spherical $k$-face. After the proof we explain the geometric reason behind the rescaling with the factor $n^\ell\omega_{\ell+1}$.

\begin{theorem}\label{thm:EuclidAsLimitSphere}
	Let $d\geq 1$, $k\in\{0,\ldots,d\}$, $\ell\in\{0,\ldots,k\}$ and consider an isotropic great hypersphere tessellation of $\SS^d$ with intensity $n\geq d+1$. Then
	\begin{align*}
	\lim_{n\to\infty} n^\ell\omega_{\ell+1}\bE v_\ell(Z_{n,d}^{(k)}) = \bE V_\ell(Z_{\gamma,d}^{(k)})\quad\text{and}\quad\lim_{n\to\infty} n^\ell\omega_{\ell+1}\bE v_\ell(W_{n,d}^{(k)}) = \bE V_\ell(W_{\gamma,d}^{(k)})
	\end{align*}
	with $\gamma={1\over\sqrt{\pi}}{\Gamma({d+1\over 2})\over\Gamma({d\over 2})}$.
\end{theorem}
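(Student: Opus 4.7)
The plan is to combine the closed-form expressions of Corollary~\ref{cor:IntVol} for $\bE v_\ell(Z_{n,d}^{(k)})$ and $\bE v_\ell(W_{n,d}^{(k)})$ with the explicit Euclidean formulas~\eqref{eq:VTypicalEuclidean} and Theorem~\ref{thm:VEuclidean}: I would perform an elementary asymptotic analysis of each spherical quantity, simplify the two Euclidean expressions after inserting the specific value $\gamma=\Gamma((d+1)/2)/(\sqrt{\pi}\,\Gamma(d/2))$, and then reconcile the two constants via Legendre's duplication formula $\Gamma(z)\Gamma(z+1/2)=2^{1-2z}\sqrt{\pi}\,\Gamma(2z)$. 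No genuinely new probabilistic input is needed; everything is pushed onto the explicit formulas already derived.

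For the typical face the ratio $\bE v_\ell(Z_{n,d}^{(k)})=\binom{n-d+k}{k-\ell}/C(n-d+k,k)$ is already fully explicit. Since $\binom{n-d+k}{k-\ell}\sim n^{k-\ell}/(k-\ell)!$ and the top binomial dominates in $C(n-d+k,k)=2\sum_{r=0}^{k}\binom{n-d+k-1}{r}\sim 2n^{k}/k!$, one immediately obtains $n^\ell\omega_{\ell+1}\bE v_\ell(Z_{n,d}^{(k)})\to (\omega_{\ell+1}/2)\binom{k}{\ell}\ell!$. On the Euclidean side, the cancellation $(2/\gamma)^\ell(\Gamma((d+1)/2)/\Gamma(d/2))^\ell=(2\sqrt{\pi})^\ell$ reduces~\eqref{eq:VTypicalEuclidean} to $2^\ell\pi^{\ell/2}\Gamma(\ell/2+1)\binom{k}{\ell}$, so the desired equality boils down to the identity $\sqrt{\pi}\,\ell!=2^\ell\Gamma(\ell/2+1)\Gamma((\ell+1)/2)$, which is Legendre duplication at $z=\ell/2$.

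For the weighted face the one genuine analytic step is the asymptotics of $B\{n-d+k+\ell,k\}$ in Corollary~\ref{cor:IntVol}. Starting from the integral representation~\eqref{eq:def_B}, I would substitute $x=\pi-u$ and exploit the fact that the factor $x^{n-d+\ell}$ concentrates at $x=\pi$; using $\sin(\pi-u)=\sin u\sim u$ and $(\pi-u)^{n-d+\ell}\sim\pi^{n-d+\ell}e^{-(n-d+\ell)u/\pi}$ near $u=0$, a Watson/Laplace argument gives $B\{n-d+k+\ell,k\}\sim\pi^{n-d+k+\ell}/(n^{k}(n-d+\ell)!)$. Feeding this back into Corollary~\ref{cor:IntVol} together with $(n-d+k)!/(n-d+\ell)!\sim n^{k-\ell}$ produces $n^\ell\omega_{\ell+1}\bE v_\ell(W_{n,d}^{(k)})\to (\omega_{\ell+1}\pi^\ell/2)A[k,\ell]$. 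Cancelling $\gamma$ and powers of $\pi$ in Theorem~\ref{thm:VEuclidean} yields $\bE V_\ell(W_{\gamma,d}^{(k)})=2^\ell\pi^{3\ell/2}\Gamma(\ell/2+1)A[k,\ell]/\ell!$, and the match between the two sides reduces once more to the same Legendre identity.

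The only step requiring care is the Watson-type asymptotic of $B\{m,k\}$ as $m\to\infty$ with $k$ fixed: to make it rigorous I would split $[0,\pi]=[0,\delta]\cup[\delta,\pi]$ for a small $\delta>0$, bound the contribution from $[\delta,\pi]$ by $(\pi-\delta)^{n-d+\ell}$ times a constant (which is exponentially smaller than the expected leading order $\pi^{n-d+k+\ell}/n^k$), and on $[0,\delta]$ integrate the Taylor expansions $\sin u=u(1+O(u^2))$ and $(\pi-u)^{n-d+\ell}=\pi^{n-d+\ell}e^{-(n-d+\ell)u/\pi}(1+O((n-d+\ell)u^2/\pi^2))$ term by term. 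Beyond this, the entire argument is Gamma-function bookkeeping.
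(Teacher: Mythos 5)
Your proof is correct and follows essentially the same route as the paper: combine the explicit formulas of Corollary~\ref{cor:IntVol} with \eqref{eq:VTypicalEuclidean} and Theorem~\ref{thm:VEuclidean}, and reconcile the resulting constants via Legendre's duplication formula. The only difference is that you derive the asymptotics of $B\{n-d+k+\ell,k\}$ by a Laplace-type argument (your expression $\pi^{n-d+k+\ell}/(n^{k}(n-d+\ell)!)$ is asymptotically the same as $\pi^{n-d+k+\ell}/(n-d+k+\ell)!$), whereas the paper simply cites this asymptotic equivalence from \cite[page 8]{kabluchko_poisson_zero}.
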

\begin{proof}
	From Corollary \ref{cor:IntVol} we have that
	$$
	\bE v_\ell(Z_{n,d}^{(k)}) = {{n-d+k\choose k-\ell}\over C(n-d+k,k)}.
	$$
	Now, we need to observe that, as $n\to\infty$, ${n-d+k\choose k-\ell}$ is asymptotically equivalent to ${n^{k-\ell}\over(k-\ell)!}$ and that $C(n-d+k,k)$ is asymptotically equivalent to ${2n^k\over k!}$, recall \eqref{eq:Cnd}. This shows that
	$$
	\lim_{n\to\infty} n^\ell\omega_{\ell+1}\bE v_\ell(Z_{n,d}^{(k)}) = \omega_{\ell+1}{\ell!\over 2}{k\choose \ell}.
	$$
	Using \eqref{eq:VTypicalEuclidean} with the intensity $\gamma$ as in the statement of the proposition we see that
	$$
	\bE V_\ell(Z_{\gamma,d}^{(k)}) = (2\sqrt{\pi})^\ell\Gamma\Big({\ell\over 2}+1\Big){k\choose\ell}.
	$$
	However, from the definition of $\omega_{\ell+1}$ and Legendre's duplication formula for the gamma function we have that
	\begin{align}\label{eq:omega_legendre}
	\omega_{\ell+1}{\ell!\over 2} = \ell\pi^{\ell+1\over 2}\,{\Gamma(\ell)\over \Gamma({\ell+1\over 2})} = \ell\pi^{\ell+1\over 2}\,{2^{\ell-1}\Gamma({\ell\over 2})\over\sqrt{\pi}} = (2\sqrt{\pi})^\ell\Gamma\Big({\ell\over 2}+1\Big).
	\end{align}
	This proves the first claim. The second one follows similarly. In fact, from Theorem \ref{cor:IntVol} we have that
	\begin{align*}
	\lim_{n\to\infty} n^\ell\omega_{\ell+1}\bE v_\ell(W_{n,d}^{(k)}) = \lim_{n\to\infty} n^\ell\omega_{\ell+1}{(n-d+k)!\over 2\pi^{n-d+k}}B\{n-d+k+\ell,k\}A[k,\ell],
	\end{align*}
	and from \cite[page 8]{kabluchko_poisson_zero} it follows that $B\{n-d+k+\ell,k\}$ is asymptotically equivalent to ${\pi^{n-d+k+\ell}\over(n-d+k+\ell)!}$, as $n\to\infty$. Thus,
	\begin{align*}
	\lim_{n\to\infty} n^\ell\omega_{\ell+1}\bE v_\ell(W_{n,d}^{(k)}) = {\omega_{\ell+1}\pi^\ell\over 2}A[k,\ell].
	\end{align*}
	On the other hand, using \eqref{eq:VWeightedEuclidean} with the intensity $\gamma$ as in the statement of the theorem we see that
	\begin{align*}
	\bE V_\ell(Z_{\gamma,d}^{(k)}) = (2\pi^{3/2})^\ell{\Gamma({\ell\over 2}+1)\over\Gamma(\ell+1)} A[k,\ell].
	\end{align*}
Applying once more the definition of $\omega_{\ell+1}$ and Legendre's duplication formula for the gamma function, we finally see that the right hand sides of the two last expressions are identical:
	$$
	(2\pi^{3/2})^\ell{\Gamma({\ell\over 2}+1)\over\Gamma(\ell+1)} = {(2\pi^{3/2})^\ell\over 2}{\Gamma({\ell\over 2})\over\Gamma(\ell)} = {(2\pi^{3/2})^\ell\over 2}{\sqrt{\pi}\over 2^{\ell-1}\Gamma({\ell+1\over 2})} = {\pi^{3\ell+1\over 2}\over\Gamma({\ell+1\over 2})} = {\omega_{\ell+1}\pi^\ell\over 2}.
	$$
	This proves the second claim as well.
\end{proof}

Let us now give a non-rigorous explanation of the rescaling that appeared in Theorem~\ref{thm:EuclidAsLimitSphere}. The idea is that the sphere $\SSd$ , on small scales, is almost flat, and that in a small window, the great hypersphere tessellation looks essentially like the Euclidean Poisson hyperplane tessellation in the appropriate tangent space, which is identified with $\RR^d$.
For simplicity we consider here only the full-dimensional cells in what follows, that is, we put $k=d$. If $n$ is large, then both the typical cell $Z_{n,d}$ and the weighted typical cell $W_{n,d}$ become ``small'' spherical polytopes (this will be made precise also in the next section when we study the statistical dimension). Since the spherical content of $\SS^d$ is $\omega_{d+1}$  and since the total number of cells in the spherical tessellation is $C(n,d) \sim 2n^d/d!$, the number of cells per unit spherical volume is $\sim 2/ (d! \omega_{d+1}) n^{d}$, where we write $\sim$ for asymptotic equivalence as $n\to\infty$. Since ``small'' spherical polytopes are essentially flat, we can multiply $Z_{n,d}$ and $W_{n,d}$ by $n$ to obtain flat polytopes which are close in distribution to the cells $Z_{\gamma, d} = Z_{\gamma,d}^{(d)}$ and $W_{\gamma,d}= W_{\gamma,d}^{(d)}$ of the Euclidean Poisson hyperplane tessellation with a parameter $\gamma$ that has to be chosen so that the following condition is satisfied. The mean number of cells per unit volume in the Euclidean Poisson hyperplane tessellation should match the spherical case, i.e., it should be $2/ (d! \omega_{d+1})$.  According to~\eqref{eq:VTypicalEuclidean} with $\ell=d$, this yields the condition
$$
\Big({2\over\gamma}\Big)^d\bigg({\Gamma({d+1\over 2})\over\Gamma({d\over 2})}\bigg)^d\Gamma\Big({d\over 2}+1\Big)
=
\frac 12 d! \omega_{d+1}.
$$
Using~\eqref{eq:omega_legendre} with $\ell$ replaced by $d$ it is easy to check that the condition is satisfied for the value of $\gamma$ given in Theorem~\ref{thm:EuclidAsLimitSphere}. 
Thus, in the large $n$ limit, the typical and the weighted spherical cells ``look like'' the corresponding Euclidean ones (with the above choice of $\gamma$) divided by $n$. In fact, it is possible to state and prove such results rigorously, see~\cite{KabluchkoTemesvariThaeleCones}.

Now, consider a small spherical polytope $P_n$ (for example, $Z_{n,d}$ or $W_{n,d}$) which is close to $\frac 1n P$, where $P$ is fixed Euclidean polytope. It remains to understand the asymptotics of the spherical intrinsic volumes $v_\ell(P_n)$, as $n\to\infty$. By the representation \eqref{eq:SphIntVolPolytope}  of $v_\ell$ we have that
$$
v_\ell(P_n) = {1\over \omega_{\ell+1}} \sum_{F_n\in \cF_\ell(P_n)} \cH^\ell(F_n)  \gamma(F,P_n).
$$
In the large $n$ limit, $F_n\in\cF_\ell(P_n)$ is approximated by $\frac 1n F$ with some $F\in \cF_{\ell}(P)$, implying that $\cH^\ell(F_n) \sim n^{-\ell} V_\ell (F)$, while  the external angle converges to its Euclidean counterpart. Comparing this to \eqref{eq:EuclIntVol}, it follows that, as $n\to\infty$,
$$
v_{\ell}(P_n) \sim \frac{V_\ell (P)}{n^\ell \omega_{\ell+1}}
$$
for all $\ell \in \{0,\ldots,d\}$.
Specifying this to the case when $P_n$ is either $Z_{n,d}$ or $W_{n,d}$, explains the rescaling  used in Theorem~\ref{thm:EuclidAsLimitSphere}.

\subsection{Statistical dimension of typical and weighted typical spherical faces}

\begin{figure}[t]
\includegraphics[width=0.3\columnwidth]{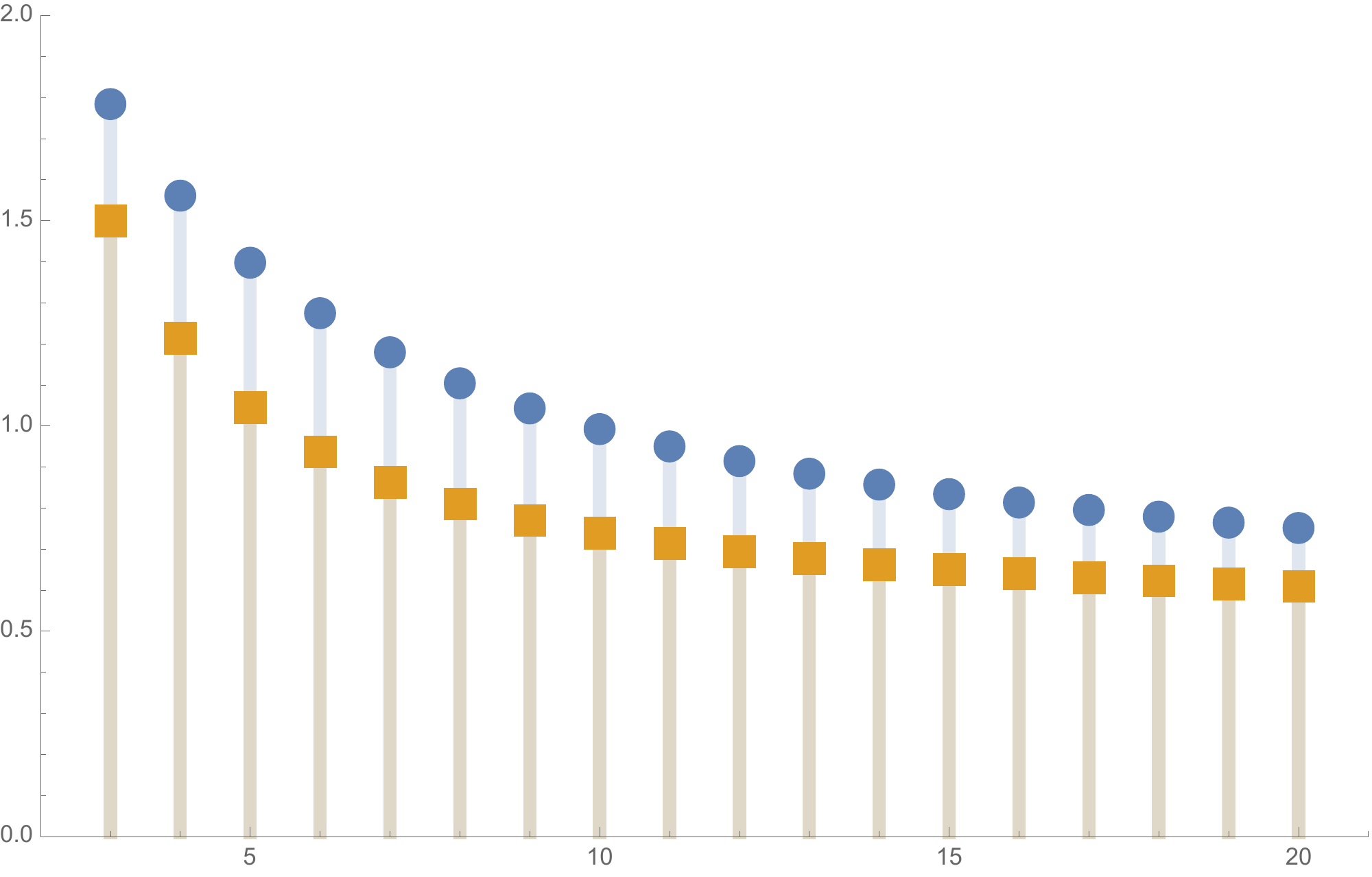}\quad
\includegraphics[width=0.3\columnwidth]{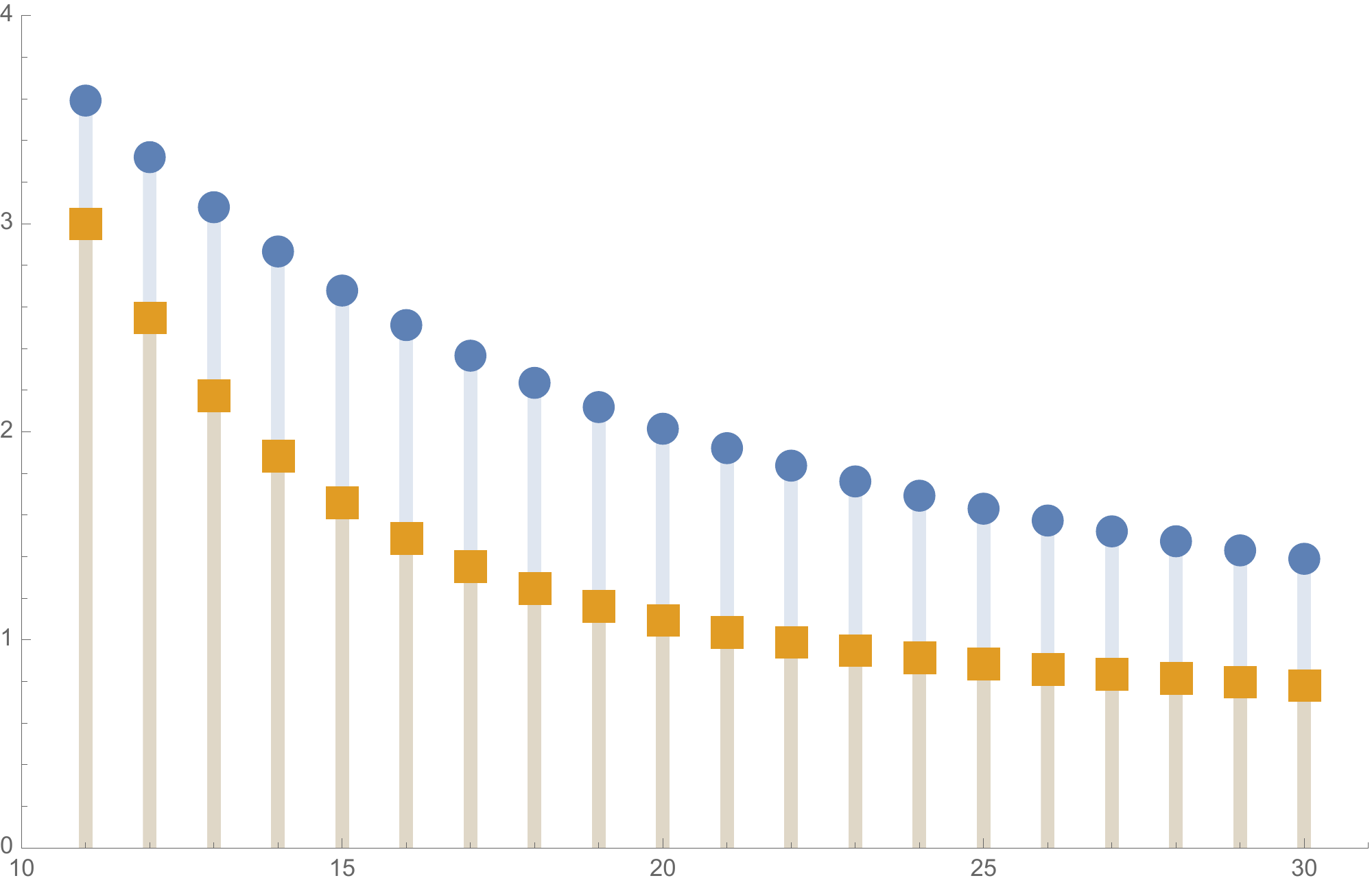}\quad
\includegraphics[width=0.3\columnwidth]{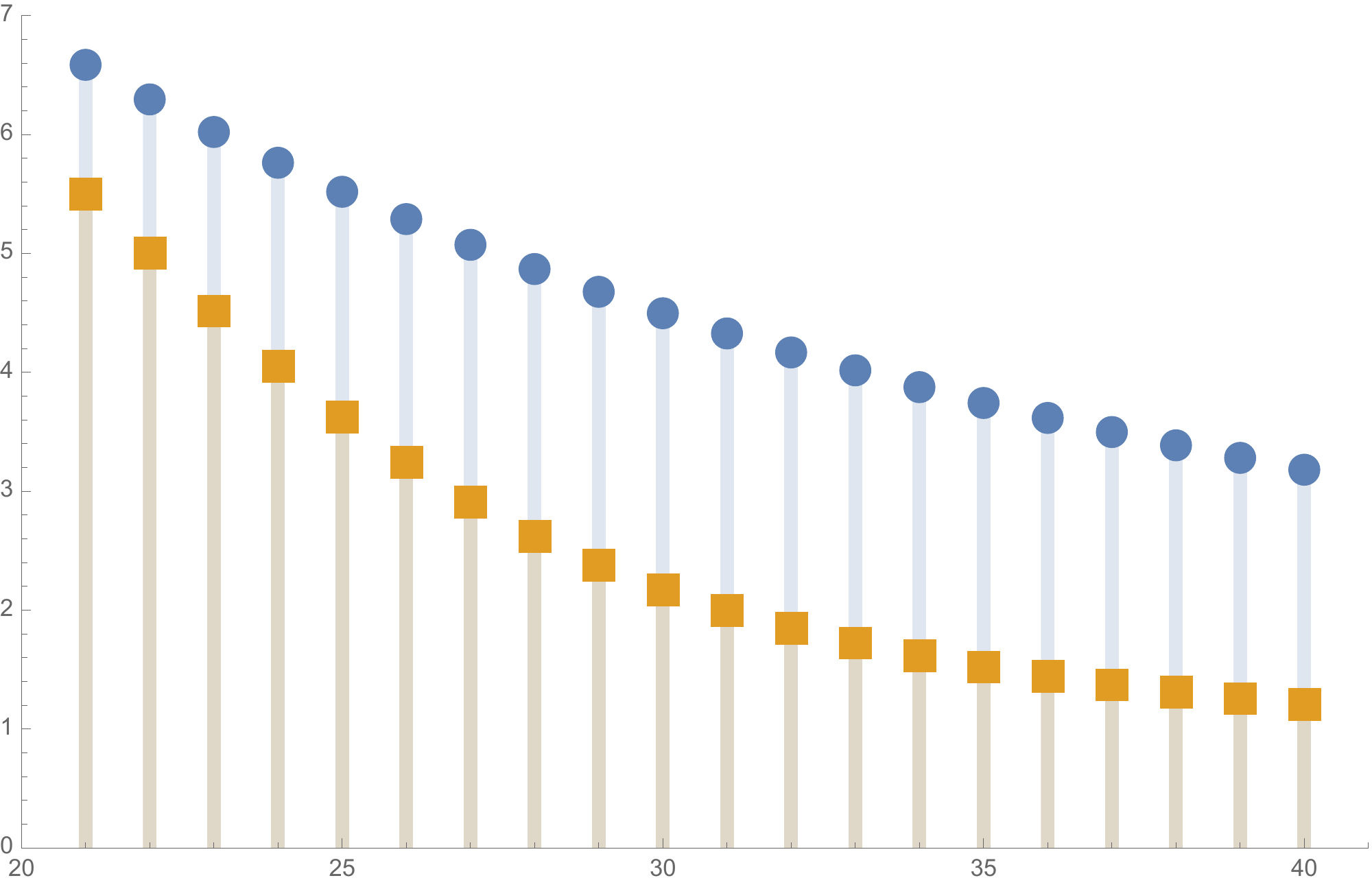}
\caption{Expected statistical dimensions of $\breve{W}_{n,d}^{(k)}$ (blue dots) and $\breve{Z}_{n,d}^{(k)}$ (orange squares) with $d=k=2$ (left panel), $d=10$ and $k=5$ (middle panel) and $d=20$ and $k=10$ (right panel) for $n\in\{d+1,\ldots,d+20\}$.}
\label{fig:statsim}
\end{figure}

The statistical dimension $\delta(C)$ of a convex cone $C\subset\RR^{d+1}$ is a highly important quantity in conical optimization or high dimensional probability. In a sense, it measures the `true' dimension or size of $C$. It is also closely related to the widely used notion of Gaussian width and to concentration phenomena for conical intrinsic volumes, see \cite{AmelunxenBuergisser,AmelunxenLotzMcCoyTropp,McCoyTropp}. By definition, $\delta(C)$ equals $\bE\|\Pi_Cg\|^2$, where $\Pi_C$ denotes the metric projection (or nearest-point map) to $C$ and $g$ is a standard Gaussian random vector in $\RR^{d+1}$. For example, if $C\subset\RR^{d+1}$ is a $k$-dimensional linear subspace, then $\delta(C)=k$. On the other hand, if $C=\pos(u)$ for some $u\in\SS^d$ is a ray, one has $\delta(C)=1/2$. In this section we study the expected statistical dimension of the random cones generated by the typical and the weighted typical spherical $k$-face of the great hypersphere tessellation $T_{n,d}$. Formally, for $k\in\{0,1,\ldots,d\}$ we define the random polyhedral cones
$$
\breve{Z}_{n,d}^{(k)} := \pos(Z_{n,d}^{(k)})\subset\RRd1\qquad\text{and}\qquad\breve{W}_{n,d}^{(k)}:=\pos(W_{n,d}^{(k)})\subset\RRd1.
$$
Using the representation \eqref{eq:RelationVcoV} of conical intrinsic volumes via spherical intrinsic volumes, their expected statistical dimensions can be expressed as
\begin{align*}
\bE\delta(\breve{Z}_{n,d}^{(k)}) = \sum_{j=1}^{k+1}j\,\bE\cv_j(\breve{Z}_{n,d}^{(k)}) = \sum_{j=0}^k(j+1)\bE v_j(Z_{n,d}^{(k)})
\end{align*}
and
\begin{align*}
\bE\delta(\breve{W}_{n,d}^{(k)}) = \sum_{j=1}^{k+1}j\,\bE\cv_j(\breve{W}_{n,d}^{(k)}) = \sum_{j=0}^k(j+1)\bE v_j(W_{n,d}^{(k)}),
\end{align*}
respectively. Unfortunately, there are no simple closed form expressions for $\bE\delta(\breve{Z}_{n,d}^{(k)})$ and $\bE\delta(\breve{W}_{n,d}^{(k)})$. However, we note the following special cases of $\bE\delta(\breve{Z}_{n,d}^{(k)})$ for $d=k\in\{2,3,4,5\}$:
{\small
\begin{alignat*}{2}
\bE\delta(\breve{Z}_{n,2}^{(2)}) &= {n^2+3n+6\over 2n^2-2n+4}, &&\bE\delta(\breve{Z}_{n,3}^{(3)}) = {n^3+3n^2+14n+24\over 2n^3-6n^2+16n}\\
\bE\delta(\breve{Z}_{n,4}^{(4)}) &= \frac{n^4+2 n^3+23 n^2+70 n+120}{2 n^4-12 n^3+46 n^2-36 n+48}, \qquad&&\bE\delta(\breve{Z}_{n,5}^{(5)}) = \frac{n^5+35 n^3+120 n^2+444 n+720}{2 n^5-20 n^4+110 n^3-220 n^2+368 n}.
\end{alignat*}}
The corresponding formulas for $\bE\delta(\breve{W}_{n,d}^{(k)})$ are even more involved.
For example, if $d=k=2$ we claim that
\begin{equation}\label{eq:stat_dim_d=2}
\begin{split}
\bE\delta(\breve{W}_{n,2}^{(2)})
&=
\frac 12 +
\frac{n!}{2\pi^n}
\Bigg(
\sum_{\substack{k\in \{0,\ldots,n\}\\n-k \text{ is even}}} (-1)^{\frac{n-k}{2}} \frac{k+2}{k!}  \pi^k
\\
&\hspace{2cm}+
2 (-1)^{n/2}\ind_{\{n \text{ is even}\}} + \pi (-1)^{\frac{n-1}2}\ind_{\{n \text{ is odd}\}}
\Bigg).
\end{split}
\end{equation}
This formula can be derived as follows. By the definition of the statistical dimension and by Corollary~\ref{cor:IntVol}, we have
\begin{equation}\\
\begin{split}
\bE\delta(\breve{W}_{n,2}^{(2)})
&=
\bE v_0({W}_{n,2}^{(2)}) + 2 \bE v_1({W}_{n,2}^{(2)}) + 3 \bE v_2({W}_{n,2}^{(2)})\\
&=
\frac{n!}{2\pi^n} \left( B\{n,2\} + \pi B\{n+1,2\} + 3 B\{n+2,2\}\right),\label{eq:bE_delta_d=2_def}
\end{split}
\end{equation}
where we also used the values $A[2,0] = 1$, $A[2,1] = \pi/2$ and $A[2,2] = 1$. Using the definition of $B\{n,2\}$ given in~\eqref{eq:def_B}, we obtain
$$
B\{n,2\} = {1\over(n-2)!} \int_0^\pi (\sin x) x^{n-2}\,\dint x
=
-\sum_{\substack{k\in \{0,\ldots,n-2\}\\n-k \text{ is even} }} (-1)^{\frac{n-k}{2}} \frac{\pi^k}{k!}  - (-1)^{n/2} \ind_{\{n \text{ is even}\}}
$$
for $n\geq 2$; see Entries 4,5,6 in Section 1.5.40 of~\cite{brychkov} for the value of the integral. Inserting this formula three times into~\eqref{eq:bE_delta_d=2_def} and performing straightforward  but lengthy transformations, we arrive at~\eqref{eq:stat_dim_d=2}. Similarly, one can obtain an explicit expression for
\begin{align*}
\bE\delta(\breve{W}_{n,3}^{(3)})
&=
\bE v_0({W}_{n,3}^{(3)}) + 2 \bE v_1({W}_{n,3}^{(3)}) + 3 \bE v_2({W}_{n,3}^{(3)}) +  4 \bE v_3({W}_{n,3}^{(3)})\\
&=
\frac{n!}{2\pi^n} \Big( B\{n,3\} + \Big(\frac{4}{\pi }+\frac{4 \pi }{3}\Big) B\{n+1,3\} + 12 B\{n+2,3\} + \frac{32}{\pi} B\{n+3,3\}\Big),
\end{align*}
using the formula
$$
B\{n,3\} = - \sum_{\substack{k\in \{0,\ldots,n-2\}\\ n-k\text{ is even}}}(-1)^{\frac{n-k}{2}}\frac{\pi^k}{k! 2^{n-k}} + \frac{(-1)^{n/2}}{2^{n}}\ind_{\{n \text{ is even}\}},
\quad n\geq 3,
$$
which follows from~\eqref{eq:def_B} and Entry~12 in Section 1.5.40 of~\cite{brychkov}.


Note that since $\bE U_\ell(Z_{n,d}^{(k)})\to 0$ and $\bE U_\ell(W_{n,d}^{(k)})\to 0$ for $\ell\in\{1,\ldots,k\}$ and $n\to\infty$, and since almost surely $U_0(Z_{n,d}^{(k)})=U_0(W_{n,d}^{(k)})=1/2$, the limit relations
$$
\lim_{n\to\infty}\bE\delta(\breve{Z}_{n,d}^{(k)})={1\over 2}\qquad\text{and}\qquad\lim_{n\to\infty}\bE\delta(\breve{W}_{n,d}^{(k)})={1\over 2}
$$
follow from \eqref{eq:RelationUV}. This is consistent with the observation that, as $n\to\infty$, $\breve{Z}_{n,d}^{(k)}$ and $\breve{W}_{n,d}^{(k)}$ asymptotically behave like rays emanating from the origin, whose statistical dimension equals $1/2$.

\subsection{Intersection probabilities for weighted typical cells}

\begin{figure}
	\centering
	\begin{tikzpicture}[]
	\pgftext{\includegraphics[width=250pt]{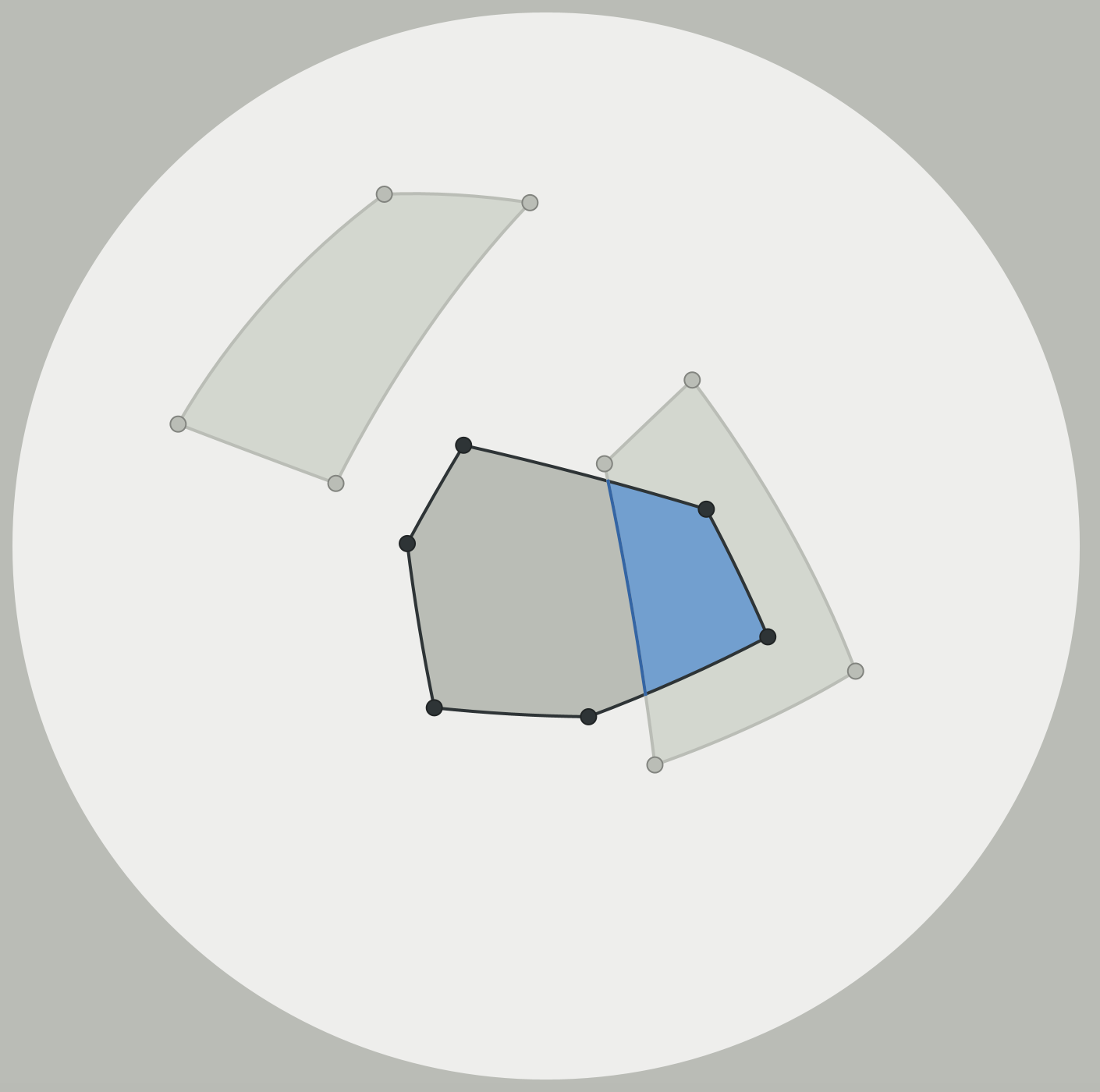}} at (0pt,0pt);
	\node at (-1.6,-1) {$W_{n,d}$};
	\node at (2,1) {$V_{n,d}$};
	\node at (-0.9,3.1) {$V_{n,d}$};
	\end{tikzpicture}
	\caption{Illustration of the two different intersection situations for a realization of $W_{n,d}$ and two different realizations of $V_{m,d}$ with $d=2$.}
\end{figure}

Intersection probabilities for random cones have recently moved into the focus of  attention in stochastic geometry because of their relevance in conical optimization problems, see \cite{AmelunxenBuergisser,AmelunxenLotzMcCoyTropp,McCoyTropp}. In particular, it is of interest in these works to evaluate the probability that a fixed cone and a randomly rotated cone share a common ray. However, we note that randomness enters in this problem only via a random rotation. The natural question now arises whether there are mathematically tractable models for cones having a \textit{random} shape that allow an exact determination of intersection probabilities. In this context, the following question has been studied in \cite{SchneiderKinematicCones}, which we rephrase in our equivalent spherical set-up. For fixed $d\geq 1$ and $n,m\in\NN$ let $P_{n,d}$ be a spherical random polytope with the same distribution as the typical cell $Z_{n,d}$ of $T_{m,d}$ and $Q_{m,d}$ be a spherical random polytope with the same distribution as the typical cell $Z_{m,d}$ of $T_{m,d}$, and assume that $P_{n,d}$ and $Q_{m,d}$ are independent. What is the probability $\bP(P_{n,d}\cap Q_{m,d}\neq\varnothing)$ that $P_{n,d}$ and $Q_{m,d}$ have a non-empty intersection? Using the spherical (or conical) kinematic formula and the explicitly known values for the spherical (or conical) intrinsic volumes of $P_{n,d}$ and $Q_{m,d}$ this probability was explicitly determined in \cite[Theorem 1.4]{SchneiderKinematicCones}. For example, for $d=2$ and $d=3$ one has that
\begin{align*}
\bP(P_{n,2}\cap Q_{m,2}\neq\varnothing) &= \frac{m^2+2 m n-m+n^2-n+2}{\left(m^2-m+2\right) \left(n^2-n+2\right)},\\
\bP(P_{n,3}\cap Q_{m,3}\neq\varnothing) &= \frac{3 (m+n) \left(m^2+2 m n-3 m+n^2-3 n+8\right)}{m \left(m^2-3
	m+8\right) n \left(n^2-3 n+8\right)}.
\end{align*}
Our goal is to complement the result in \cite{SchneiderKinematicCones} by studying the corresponding intersection probability for weighted typical cells. Passing to their conical versions, this adds another tractable model to the question addressed above. However, in contrast to the model studied in \cite{SchneiderKinematicCones} we would like to point out that the intersection probability for weighted typical cells is not just a purely combinatorial quantity.

\begin{figure}[t]
	\centering
	\includegraphics[width=0.3\columnwidth]{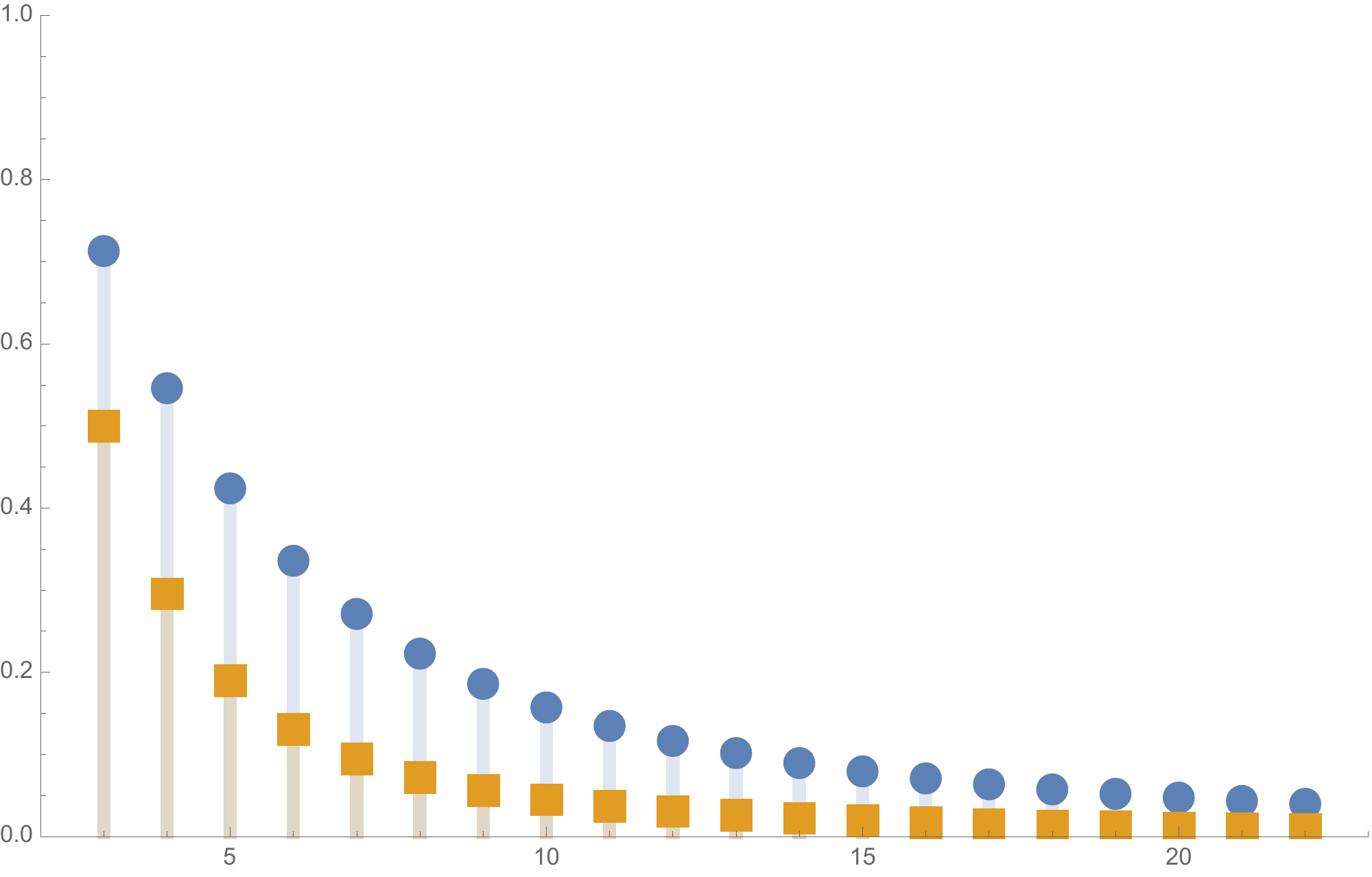}\quad
	\includegraphics[width=0.3\columnwidth]{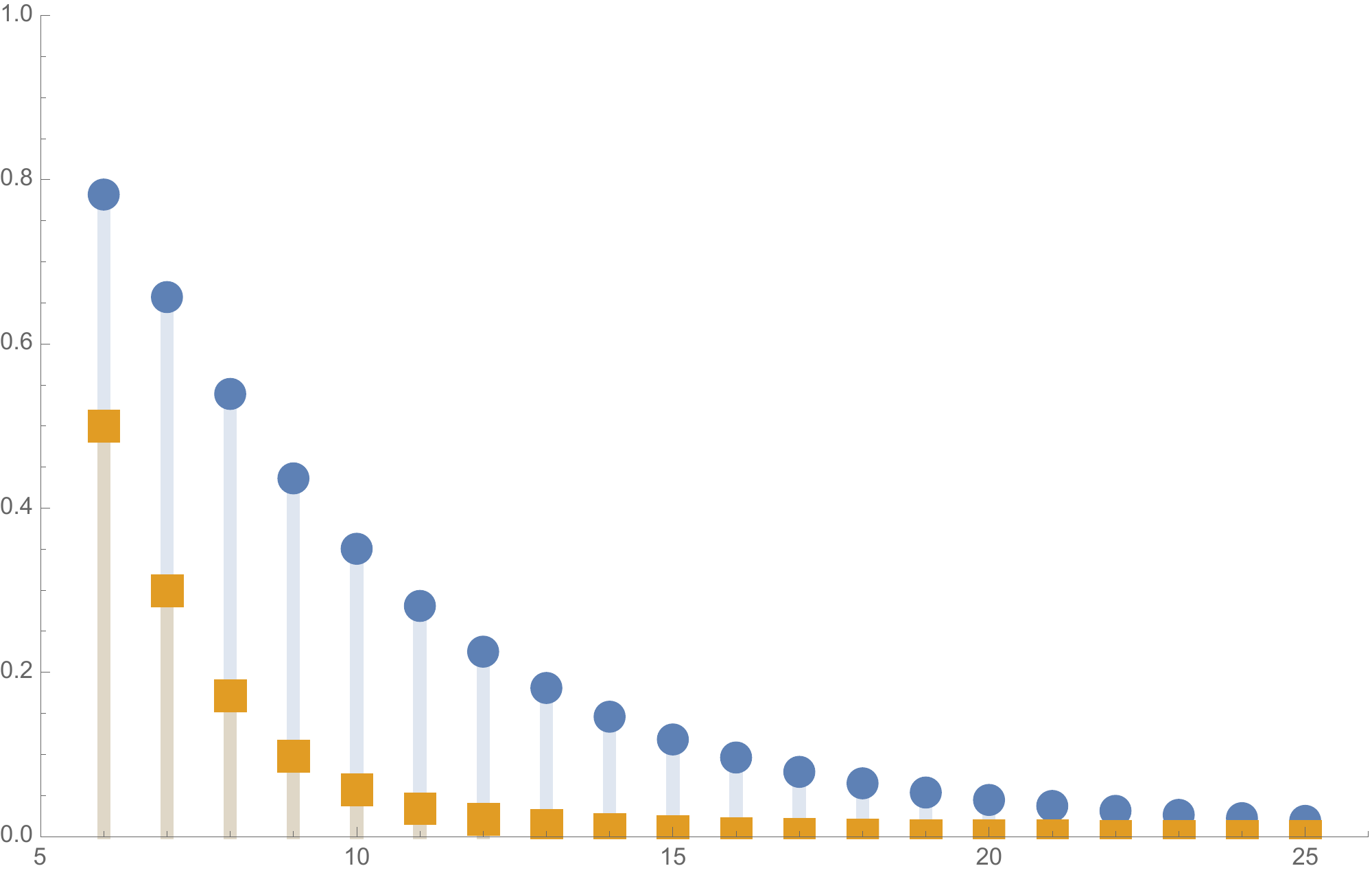}\quad
	\includegraphics[width=0.3\columnwidth]{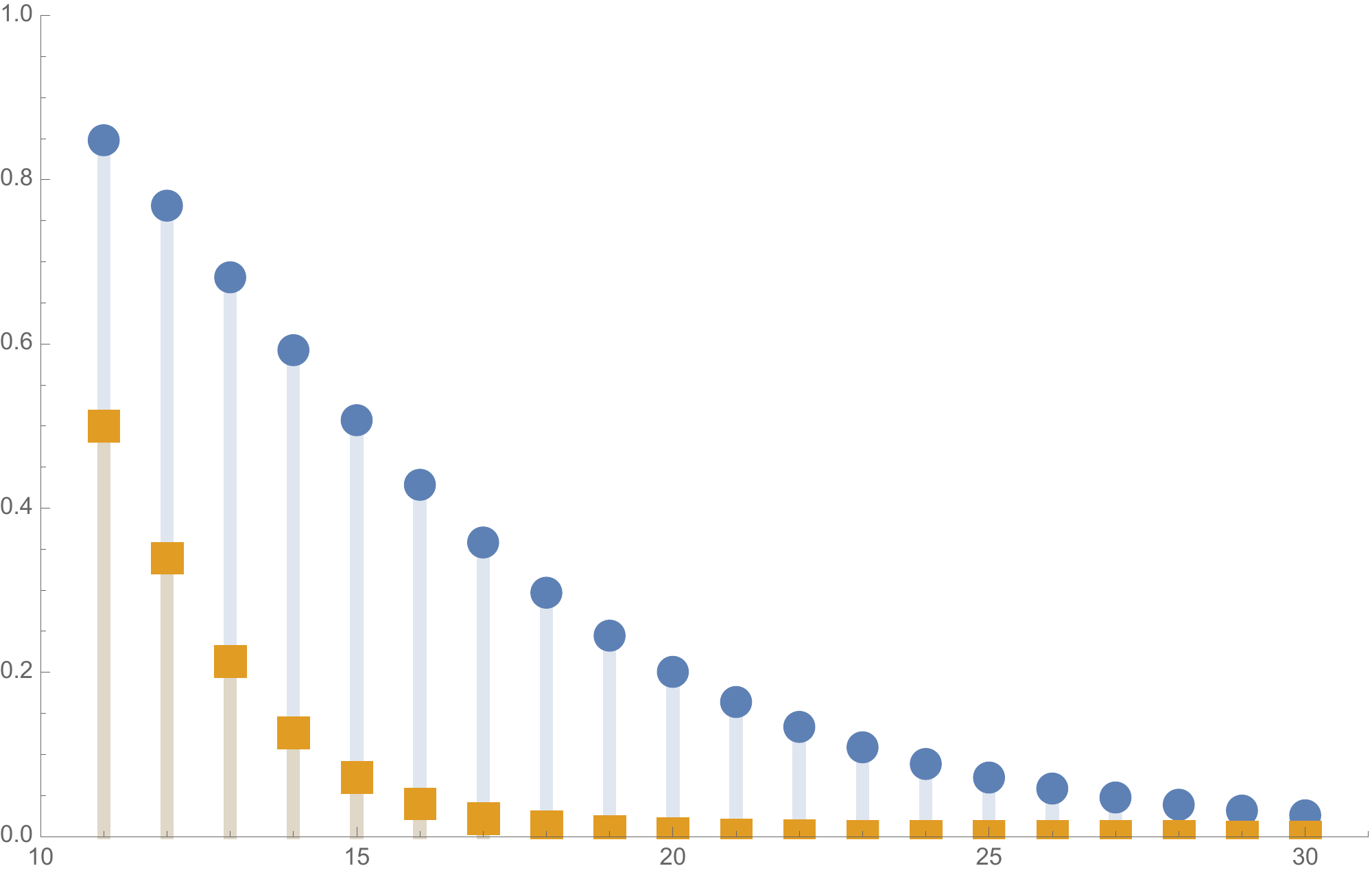}
	\caption{Intersection probabilities $\bP(W_{n,d}\cap V_{n,d}\neq\varnothing)$ and $\bP(P_{n,d}\cap Q_{n,d}\neq\varnothing)$ for the weighted typical cell (blue dots) and typical cell (orange squares) for $d\in\{2,5,10\}$ and $n\in\{d+1,\ldots,d+20\}$.}
	\label{fig:InterPlot}
\end{figure}

\begin{theorem}\label{thm:IntersectionProb}
For $d\geq  1$ and $n,m>d$ consider two independent isotropic great hypersphere tessellations $T_{n,d}$ and $\widetilde{T}_{m,d}$ of $\SS^d$. Let $W_{n,d}$ be the weighted typical cell of $T_{n,d}$ and $V_{m,d}$ be the weighted typical cell of $\widetilde{T}_{m,d}$. Then
$$
\bP(W_{n,d}\cap V_{m,d}\neq\varnothing)\! =\! {n!m!\over 2\pi^{n+m}}\sum_{k=0}^{\lfloor{d\over 2}\rfloor}\sum_{i=2k}^d\!\! B\{n+d-i+2k,d\}B\{m+i,d\}A[d,d-i+2k]A[d,i].
$$
\end{theorem}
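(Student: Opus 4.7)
The plan is to combine the spherical version of the conical kinematic formula for intersection probabilities with the explicit expressions for the expected spherical intrinsic volumes of the weighted typical cell provided by Corollary~\ref{cor:IntVol}.

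First, I observe that since $T_{n,d}$ and $\widetilde{T}_{m,d}$ are independent and isotropic, the cells $W_{n,d}$ and $V_{m,d}$ are independent with rotation-invariant distributions on $\KK_s(d)$, and almost surely each is a proper spherical polytope (neither empty nor a great subsphere). The spherical/conical kinematic formula for intersection probability (see \cite[Chapter~6.5]{SW} and \cite{AmelunxenLotzMcCoyTropp}, whose conical version is used in \cite{SchneiderKinematicCones}) then takes the form
$$
\bP(W_{n,d} \cap V_{m,d} \neq \varnothing) = 2 \sum_{\substack{0 \leq i, j \leq d \\ i+j \geq d,\ i+j-d\text{ even}}} \bE v_i(W_{n,d})\, \bE v_j(V_{m,d}).
$$
The factor $2$ together with the parity constraint $i+j-d$ even originates from the Gauss--Bonnet relation for convex cones. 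As a consistency check one can apply this identity to the typical cells, using $\bE v_\ell(Z_{n,d}) = \binom{n}{d-\ell}/C(n,d)$ from Corollary~\ref{cor:IntVol}, and verify that Schneider's formula \cite[Theorem~1.4]{SchneiderKinematicCones} is recovered; this works out for the explicit $d\in\{2,3\}$ cases quoted above.

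Next, I substitute the formula $\bE v_\ell(W_{n,d}) = \frac{n!}{2\pi^n} B\{n+\ell, d\} A[d, \ell]$ from Corollary~\ref{cor:IntVol}, together with the analogous expression for $V_{m,d}$, into the identity above. This factors out the overall prefactor $\frac{n! m!}{2\pi^{n+m}}$ and leaves a double sum of products $B\{n+j, d\} B\{m+i, d\} A[d, j] A[d, i]$ over the admissible index set. Finally, I reindex by setting $i+j = d + 2k$ with $k \in \{0, 1, \ldots, \lfloor d/2 \rfloor\}$; for each such $k$, the condition $0\le i,j\le d$ forces $i \in \{2k, 2k+1, \ldots, d\}$ and $j = d - i + 2k$. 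Making this substitution produces exactly the stated double sum.

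The main obstacle lies in pinning down and justifying the precise form of the spherical kinematic formula with its parity constraint, rather than in any of the subsequent algebraic manipulations, which amount to routine bookkeeping.
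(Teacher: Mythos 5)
Your proposal is correct and follows essentially the same route as the paper: the spherical principal kinematic formula (which the paper invokes from \cite[p.~261]{SW} after an explicit Fubini/rotation-averaging step that you leave implicit in the isotropy remark) combined with the expected spherical intrinsic volumes from Corollary~\ref{cor:IntVol} and independence. Your parity-constrained index set $\{0\le i,j\le d,\ i+j\ge d,\ i+j-d \text{ even}\}$ is exactly the paper's double sum after the substitution $j=d-i+2k$, so the reindexing is sound.
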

\begin{proof}
We use the isotropy assumption and Fubini's theorem to see that
\begin{align*}
\bP(W_{n,d}\cap V_{m,d}\neq\varnothing) &= \bE{\bf 1}\{W_{n,d}\cap V_{m,d}\neq\varnothing\}\\
&=\int_{\SO(d+1)}\bE{\bf 1}\{\varrho W_{n,d}\cap V_{m,d}\neq\varnothing\}\,\nu(\dint \varrho)\\
&=\bE\int_{\SO(d+1)}{\bf 1}\{\varrho W_{n,d}\cap V_{m,d}\neq\varnothing\}\,\nu(\dint \varrho),
\end{align*}
where we denote by $\nu$ the rotation invariant Haar probability measure on $\SO(d+1)$. To the last expression we apply the spherical principal kinematic formula, see \cite[p.\ 261]{SW}. In our case it says that, for almost all given realizations of $W_{n,d}$ and $V_{m,d}$,
$$
\int_{\SO(d+1)}{\bf 1}\{\varrho W_{n,d}\cap V_{m,d}\neq\varnothing\}\,\nu(\dint \varrho) = 2\sum_{k=0}^{\lfloor{d\over 2}\rfloor}\sum_{i=2k}^d v_{d-i+2k}(W_{n,d})\, v_i(V_{m,d}).
$$
The result now follows by taking expectations, using the independence of $W_{n,d}$ and $V_{m,d}$ and finally Corollary \ref{cor:IntVol}.
\end{proof}

Using the previous result it is in principle possible to obtain a fully explicit formula for the intersection probability $\bP(W_{n,d}\cap V_{m,d}\neq\varnothing)$ by combining Corollary \ref{cor:U} with \eqref{eq:RelationUV}. For example, if we denote by $_pF_q(a_1,\ldots,a_p;b_1,\ldots,b_q;z)$ the usual hypergeometric function, then $\bP(W_{n,2}\cap V_{m,2}\neq\varnothing)$ can be expressed as
\begin{align*}
&{\pi^2\over 8(n+1)} \bigg[{\pi^2nm\over m+1} {_1F_2}\Big({m+1\over 2};{3\over 2},{m+3\over 2};-{\pi^2\over 4}\Big){_1F_2}\Big({n+1\over 2};{3\over 2},{n+3\over 2};-{\pi^2\over 4}\Big)\\
&+{4(n+1)\over (n+2)(m+2)}{_1F_2}\Big(1+{m\over 2};{3\over 2},1+{m\over 2};-{\pi^2\over 4}\Big)\Big(n+2-\pi^2{_1F_2}\Big(1+{n\over 2};{3\over 2},2+{n\over 2};-{\pi^2\over 4}\Big)\Big)\\
&+{4(n+1)\over n+2}{_1F_2}\Big(1+{n\over 2};{3\over 2},1+{n\over 2};-{\pi^2\over 4}\Big)\bigg]
\end{align*}
for $m,n\geq 3$. However, since such formulas become rather involved in general, we refrain from presenting them. Instead, we collect some particular values for small $n$, $m$ and $d$ in Appendix \ref{app:IntesectionProb} and compare in Figure \ref{fig:InterPlot} the intersection probabilities $\bP(W_{n,d}\cap V_{n,d}\neq\varnothing)$ for the weighted typical cell with those for the typical cell for $d\in\{2,5,10\}$.

\begin{remark}
	It is also possible to determine the intersection probability $\bP(P\cap W_{n,d}\neq\varnothing)$, where $P\in\PP_s(d)$ is now a fixed spherical polytope. In fact, repeating the proof of Theorem \ref{thm:IntersectionProb}, one shows that for $d\geq 1$ and $n\geq d+1$,
	$$
	\bP(P\cap W_{n,d}\neq\varnothing) = {n!\over\pi^n}\sum_{k=0}^{\lfloor{d\over 2}\rfloor}\sum_{i=2k}^dB\{n+d-2+2k\}A[d,d-i+2k]\,v_i(P).
	$$
\end{remark}

\subsection*{Acknowledgement}

ZK has been supported by the German Research Foundation under Germany's Excellence Strategy  EXC 2044 -- 390685587, Mathematics M\"unster: Dynamics - Geometry - Structure.

\addcontentsline{toc}{section}{References}


\begin{landscape}

\appendix

\section{Expected spherical face numbers}\label{app:FaceNumbers}

{\small Values of $\bE f_0(W_{n,2}^{(2)})=\bE f_1(W_{n,2}^{(2)})$ and $\bE f_0(Z_{n,2}^{(2)})=\bE f_1(Z_{n,2}^{(2)})$ for $n\in\{4,\ldots,10\}$:

\medspace

\resizebox{\linewidth}{!}{	\begin{tabular}{|c||c|c|c|c|c|c|c|c|}
		\hline
		\parbox[0pt][2em][c]{0cm}{}		$n$ & $3$ & $4$ & $5$ & $6$ & $7$ & $8$ & $9$ & $10$ \\
		\hline
		\hline
		\parbox[0pt][2em][c]{0cm}{}		$\bE f_0(W_{n,2}^{(2)})$ & $3$ & $6-\frac{24}{\pi ^2}$ & $10-\frac{60}{\pi ^2}$ & $15+\frac{720}{\pi ^4}-\frac{180}{\pi ^2}$ & $21+\frac{2520}{\pi ^4}-\frac{420}{\pi ^2}$ & $28-\frac{40320}{\pi ^6}+\frac{10080}{\pi ^4}-\frac{840}{\pi ^2}$ & $36-\frac{181440}{\pi ^6}+\frac{30240}{\pi ^4}-\frac{1512}{\pi ^2}$ & $45+\frac{3628800}{\pi ^8}-\frac{907200}{\pi ^6}+\frac{75600}{\pi ^4}-\frac{2520}{\pi ^2}$\\
		\hline
		\hline
		\parbox[0pt][2em][c]{0cm}{}		$\bE f_0(Z_{n,2}^{(2)})$	& $3$ & $3$ & ${24\over 7}$ & ${40\over 11}$ & ${15\over 4}$ & ${42\over 11}$ & $112\over 29$ & $144\over 37$\\
		\hline
\end{tabular}}}

\vspace{2cm}

{\small Values of $\bE f_\ell(W_{n,3}^{(3)})$ and $\bE f_\ell(Z_{n,3}^{(3)})$ for $\ell\in\{0,1,2\}$ and $n\in\{4,\ldots,10\}$:

\medspace

\resizebox{\linewidth}{!}{	\begin{tabular}{|c||c|c|c|c|c|c|c|}
		\hline
		\parbox[0pt][2em][c]{0cm}{}		$n$ & $4$ & $5$ & $6$ & $7$ & $8$ & $9$ & $10$ \\
		\hline
		\hline
		\parbox[0pt][2em][c]{0cm}{}		$\bE f_0(W_{n,3}^{(3)})$	& $4$ & $\frac{20}{3}-\frac{10}{\pi ^2}$ & $10-\frac{30}{\pi ^2}$ & $14+\frac{105}{\pi ^4}-\frac{70}{\pi ^2}$ & $\frac{56}{3}+\frac{420}{\pi ^4}-\frac{140}{\pi ^2}$ & $24-\frac{1890}{\pi ^6}+\frac{1260}{\pi ^4}-\frac{252}{\pi ^2}$ & $30-\frac{9450}{\pi ^6}+\frac{3150}{\pi ^4}-\frac{420}{\pi ^2}$\\
		\hline
		\parbox[0pt][2em][c]{0cm}{}		$\bE f_1(W_{n,3}^{(3)})$ & $6$ & $10-\frac{15}{\pi ^2}$ & $15-\frac{45}{\pi ^2}$ & $21+\frac{315}{2 \pi ^4}-\frac{105}{\pi ^2}$ & $28+\frac{630}{\pi ^4}-\frac{210}{\pi ^2}$ & $36-\frac{2835}{\pi ^6}+\frac{1890}{\pi ^4}-\frac{378}{\pi ^2}$ & $45-\frac{14175}{\pi ^6}+\frac{4725}{\pi ^4}-\frac{630}{\pi ^2}$\\
		\hline
		\parbox[0pt][2em][c]{0cm}{}		$\bE f_2(W_{n,3}^{(3)})$ & $4$ & $\frac{16}{3}-\frac{5}{\pi ^2}$ & $7-\frac{15}{\pi ^2}$ & $9+\frac{105}{2 \pi ^4}-\frac{35}{\pi ^2}$ & $\frac{34}{3}+\frac{210}{\pi ^4}-\frac{70}{\pi ^2}$ & $14-\frac{945}{\pi ^6}+\frac{630}{\pi ^4}-\frac{126}{\pi ^2}$ & $17-\frac{4725}{\pi ^6}+\frac{1575}{\pi ^4}-\frac{210}{\pi ^2}$\\
		\hline
		\hline
		\parbox[0pt][2em][c]{0cm}{}		$\bE f_0(Z_{n,3}^{(3)})$	&  $4$ & ${16\over 3}$ & ${80\over 13}$ & ${20\over 3}$ & $7$ & $224\over 31$ & $96\over 13$ \\
		\hline
		\parbox[0pt][2em][c]{0cm}{}		$\bE f_1(Z_{n,3}^{(3)})$	& $6$ & $8$ & ${120\over 13}$ & $10$ & $21\over 2$ & $336\over 31$ & $114\over 13$\\
		\hline
		\parbox[0pt][2em][c]{0cm}{}		$\bE f_2(Z_{n,3}^{(3)})$	& $4$ & $14\over 3$ & ${66\over 13}$ & $16\over 3$ & $11\over 2$ & $174\over 31$ & $74\over 13$\\
		\hline
\end{tabular}}}

\newpage

\section{Expected spherical Querma\ss integrals}\label{app:Quermass}

{\small Values of $\bE U_1(W_{n,2}^{(2)})$, $\bE U_2(W_{n,2}^{(2)})$ and $\bE U_1(Z_{n,2}^{(2)})$, $\bE U_2(Z_{n,2}^{(2)})$ for $n\in\{3,\ldots,9\}$:}

\medspace

\resizebox{\linewidth}{!}{	\begin{tabular}{|c||c|c|c|c|c|c|c|}
		\hline
		\parbox[0pt][2em][c]{0cm}{}		$n$ & $3$ & $4$ & $5$ & $6$ & $7$ & $8$ & $9$\\
		\hline
		\hline
		\parbox[0pt][2em][c]{0cm}{}		$\bE U_1(W_{n,2}^{(2)})$ & $\frac{3}{4}-\frac{3}{\pi ^2}$	& $1-\frac{6}{\pi ^2}$ & $\frac{5}{4}+\frac{60}{\pi ^4}-\frac{15}{\pi ^2}$ & $\frac{3}{2}+\frac{180}{\pi ^4}-\frac{30}{\pi ^2}$ & $\frac{7}{4}-\frac{2520}{\pi ^6}+\frac{630}{\pi ^4}-\frac{105}{2 \pi ^2}$ & $2-\frac{10080}{\pi ^6}+\frac{1680}{\pi ^4}-\frac{84}{\pi ^2}$ & $\frac{9}{4}+\frac{181440}{\pi ^8}-\frac{45360}{\pi ^6}+\frac{3780}{\pi ^4}-\frac{126}{\pi
			^2}$\\
		\hline
		\parbox[0pt][2em][c]{0cm}{}	$\bE U_2(W_{n,2}^{(2)})$ & $\frac{1}{2}-\frac{3}{\pi ^2}$ & $\frac{1}{2}+\frac{24}{\pi ^4}-\frac{6}{\pi ^2}$ & $\frac{1}{2}+\frac{60}{\pi ^4}-\frac{10}{\pi ^2}$ & $\frac{1}{2}-\frac{720}{\pi ^6}+\frac{180}{\pi ^4}-\frac{15}{\pi ^2}$ & $\frac{1}{2}-\frac{2520}{\pi ^6}+\frac{420}{\pi ^4}-\frac{21}{\pi ^2}$ & $\frac{1}{2}+\frac{40320}{\pi ^8}-\frac{10080}{\pi ^6}+\frac{840}{\pi ^4}-\frac{28}{\pi
			^2}$ & $\frac{1}{2}+\frac{181440}{\pi ^8}-\frac{30240}{\pi ^6}+\frac{1512}{\pi ^4}-\frac{36}{\pi
			^2}$\\
		\hline
		\hline
		\parbox[0pt][2em][c]{0cm}{}		$\bE U_1(Z_{n,2}^{(2)})$	& $3\over 8$ & $2\over 7$ & $5\over 22$ & $3\over 16$ & $7\over 44$ & $4\over 29$ & $9\over 74$\\
		\hline
		\parbox[0pt][2em][c]{0cm}{}	$\bE U_2(Z_{n,2}^{(2)})$ & $1\over 8$ & $1\over 14$ & $1\over 22$ & $1\over 32$ & $1\over 44$ & $1\over 58$ & $1\over 74$\\
		\hline
\end{tabular}}

\vspace{1cm}

{\small Values of $\bE U_1(W_{n,3}^{(3)})$, $\bE U_2(W_{n,3}^{(3)})$, $\bE U_3(W_{n,3}^{(3)})$ and $\bE U_1(Z_{n,3}^{(3)})$, $\bE U_2(Z_{n,3}^{(3)})$, $\bE U_3(Z_{n,3}^{(3)})$ for $n\in\{4,\ldots,9\}$:

\medspace

\resizebox{\linewidth}{!}{	\begin{tabular}{|c||c|c|c|c|c|c|}
		\hline
		\parbox[0pt][2em][c]{0cm}{}		$n$ & $4$ & $5$ & $6$ & $7$ & $8$ & $9$\\
		\hline
		\hline
		\parbox[0pt][2em][c]{0cm}{}		$\bE U_1(W_{n,3}^{(3)})$ & $\frac{8}{15}-\frac{1}{2 \pi ^2}$ & $\frac{7}{12}-\frac{5}{4 \pi ^2}$ & $\frac{9}{14}+\frac{15}{4 \pi ^4}-\frac{5}{2 \pi ^2}$ & $\frac{17}{24}+\frac{105}{8 \pi ^4}-\frac{35}{8 \pi ^2}$ & $\frac{7}{9}-\frac{105}{2 \pi ^6}+\frac{35}{\pi ^4}-\frac{7}{\pi ^2}$ & $\frac{17}{20}-\frac{945}{4 \pi ^6}+\frac{315}{4 \pi ^4}-\frac{21}{2 \pi ^2}$\\
		\hline
		\parbox[0pt][2em][c]{0cm}{}	$\bE U_2(W_{n,3}^{(3)})$ & $\frac{1}{2}-\frac{3}{2 \pi ^2}$ & $\frac{1}{2}+\frac{15}{4 \pi ^4}-\frac{5}{2 \pi ^2}$ & $\frac{1}{2}+\frac{45}{4 \pi ^4}-\frac{15}{4 \pi ^2}$ & $\frac{1}{2}-\frac{315}{8 \pi ^6}+\frac{105}{4 \pi ^4}-\frac{21}{4 \pi ^2}$ & $\frac{1}{2}-\frac{315}{2 \pi ^6}+\frac{105}{2 \pi ^4}-\frac{7}{\pi ^2}$ & $\frac{1}{2}+\frac{2835}{4 \pi ^8}-\frac{945}{2 \pi ^6}+\frac{189}{2 \pi ^4}-\frac{9}{\pi
			^2}$\\
		\hline
		\parbox[0pt][2em][c]{0cm}{}	$\bE U_3(W_{n,3}^{(3)})$ & $\frac{1}{5}+\frac{3}{2 \pi ^4}-\frac{1}{\pi ^2}$ & $\frac{1}{6}+\frac{15}{4 \pi ^4}-\frac{5}{4 \pi ^2}$ & $\frac{1}{7}-\frac{45}{4 \pi ^6}+\frac{15}{2 \pi ^4}-\frac{3}{2 \pi ^2}$ & $\frac{1}{8}-\frac{315}{8 \pi ^6}+\frac{105}{8 \pi ^4}-\frac{7}{4 \pi ^2}$ & $\frac{1}{9}+\frac{315}{2 \pi ^8}-\frac{105}{\pi ^6}+\frac{21}{\pi ^4}-\frac{2}{\pi ^2}$ & $\frac{1}{10}+\frac{2835}{4 \pi ^8}-\frac{945}{4 \pi ^6}+\frac{63}{2 \pi ^4}-\frac{9}{4
			\pi ^2}$\\
		\hline
		\hline
		\parbox[0pt][2em][c]{0cm}{}		$\bE U_1(Z_{n,3}^{(3)})$	&  $7\over 16$ & $11\over 30$ & $4\over 13$ & $11\over 42$ & $29\over 128$ & $37\over 186$\\
		\hline
		\parbox[0pt][2em][c]{0cm}{}	$\bE U_2(Z_{n,3}^{(3)})$ & $1\over 4$ & $1\over 6$ & $3\over 26$ & $1\over 12$ & $1\over 16$ & $3\over 62$\\
		\hline
		\parbox[0pt][2em][c]{0cm}{}	$\bE U_3(Z_{n,3}^{(3)})$ & $1\over 16$ & $1\over 30$ & $1\over 52$ & $1\over 84$ & $1\over 128$ & $1\over 186$\\
		\hline
\end{tabular}}}

\newpage

\section{Expected spherical intrinsic volumes}\label{app:IntVol}

{\footnotesize Values of $\bE v_0(W_{n,2}^{(2)})$, $\bE v_1(W_{n,2}^{(2)})$, $\bE v_2(W_{n,2}^{(2)})$ and $\bE v_0(Z_{n,2}^{(2)})$, $\bE v_1(Z_{n,2}^{(2)})$ and $\bE v_2(Z_{n,2}^{(2)})$ for $n\in\{3,\ldots,9\}$:

\medspace

\resizebox{\linewidth}{!}{\tiny	\begin{tabular}{|c||c|c|c|c|c|c|c|}
		\hline
		\parbox[0pt][2em][c]{0cm}{}		$n$ & $3$ & $4$ & $5$ & $6$ & $7$ & $8$ & $9$\\
		\hline
		\hline
		\parbox[0pt][2em][c]{0cm}{}		$\bE v_0(W_{n,2}^{(2)})$ & $\frac{3}{\pi ^2}$	& $\frac{6}{\pi ^2}-\frac{24}{\pi ^4}$ & $\frac{10}{\pi ^2}-\frac{60}{\pi ^4}$ & $\frac{720}{\pi ^6}-\frac{180}{\pi ^4}+\frac{15}{\pi ^2}$ & $\frac{2520}{\pi ^6}-\frac{420}{\pi ^4}+\frac{21}{\pi ^2}$ & $-\frac{40320}{\pi ^8}+\frac{10080}{\pi ^6}-\frac{840}{\pi ^4}+\frac{28}{\pi ^2}$ & $-\frac{181440}{\pi ^8}+\frac{30240}{\pi ^6}-\frac{1512}{\pi ^4}+\frac{36}{\pi ^2}$\\
		\hline
		\parbox[0pt][2em][c]{0cm}{}		$\bE v_1(W_{n,2}^{(2)})$ & $\frac{3}{2 \pi }$	& $\frac{3}{\pi }-\frac{12}{\pi ^3}$ & $\frac{5}{\pi }-\frac{30}{\pi ^3}$ & $\frac{360}{\pi ^5}-\frac{90}{\pi ^3}+\frac{15}{2 \pi }$ & $\frac{1260}{\pi ^5}-\frac{210}{\pi ^3}+\frac{21}{2 \pi }$ & $-\frac{20160}{\pi ^7}+\frac{5040}{\pi ^5}-\frac{420}{\pi ^3}+\frac{14}{\pi }$ & $-\frac{90720}{\pi ^7}+\frac{15120}{\pi ^5}-\frac{756}{\pi ^3}+\frac{18}{\pi }$\\
		\hline
		\parbox[0pt][2em][c]{0cm}{}		$\bE v_2(W_{n,2}^{(2)})$ & $\frac{3}{\pi ^2}$	& $\frac{6}{\pi ^2}-\frac{24}{\pi ^4}$ & $\frac{10}{\pi ^2}-\frac{60}{\pi ^4}$ & $\frac{720}{\pi ^6}-\frac{180}{\pi ^4}+\frac{15}{\pi ^2}$ & $\frac{2520}{\pi ^6}-\frac{420}{\pi ^4}+\frac{21}{\pi ^2}$ & $-\frac{40320}{\pi ^8}+\frac{10080}{\pi ^6}-\frac{840}{\pi ^4}+\frac{28}{\pi ^2}$ & $-\frac{181440}{\pi ^8}+\frac{30240}{\pi ^6}-\frac{1512}{\pi ^4}+\frac{36}{\pi ^2}$\\
		\hline
		\hline
		\parbox[0pt][2em][c]{0cm}{}		$\bE v_0(Z_{n,2}^{(2)})$	& $3\over 8$ & $3\over 7$ & $5\over 11$ & $15\over 32$ & $21\over 44$ & $14\over 29$ & $18\over 37$\\
		\hline
		\parbox[0pt][2em][c]{0cm}{}		$\bE v_1(Z_{n,2}^{(2)})$	& $3\over 8$ & $2\over 7$ & $5\over 22$ & $3\over 16$ & $7\over 44$ & $4\over 29$ & $9\over 74$\\
		\hline
		\parbox[0pt][2em][c]{0cm}{}		$\bE v_2(Z_{n,2}^{(2)})$	& $1\over 8$ & $1\over 14$ & $1\over 22$ & $1\over 32$ & $1\over 44$ & $1\over 58$ & $1\over 74$\\
		\hline
\end{tabular}}}

\bigskip


{\footnotesize Values of $\bE v_0(W_{n,3}^{(3)})$, $\bE v_1(W_{n,3}^{(3)})$, $\bE v_2(W_{n,3}^{(3)})$, $\bE v_3(W_{n,3}^{(3)})$ and $\bE v_0(Z_{n,3}^{(3)})$, $\bE v_1(Z_{n,3}^{(3)})$, $\bE v_2(Z_{n,3}^{(3)})$, $\bE v_3(Z_{n,3}^{(3)})$ for $n\in\{4,\ldots,9\}$:
	
	\medspace
	
	\resizebox{\linewidth}{!}{\tiny	\begin{tabular}{|c||c|c|c|c|c|c|}
			\hline
			\parbox[0pt][2em][c]{0cm}{}		$n$ & $4$ & $5$ & $6$ & $7$ & $8$ & $9$\\
			\hline
			\hline
			\parbox[0pt][2em][c]{0cm}{}		$\bE v_0(W_{n,3}^{(3)})$ & $\frac{3}{2 \pi ^2}$ & $\frac{5}{2 \pi ^2}-\frac{15}{4 \pi ^4}$ & $\frac{15}{4 \pi ^2}-\frac{45}{4 \pi ^4}$ & $\frac{315}{8 \pi ^6}-\frac{105}{4 \pi ^4}+\frac{21}{4 \pi ^2}$ & $\frac{315}{2 \pi ^6}-\frac{105}{2 \pi ^4}+\frac{7}{\pi ^2}$ & $-\frac{2835}{4 \pi ^8}+\frac{945}{2 \pi ^6}-\frac{189}{2 \pi ^4}+\frac{9}{\pi ^2}$\\
			\hline
			\hline
			\parbox[0pt][2em][c]{0cm}{}		$\bE v_1(W_{n,3}^{(3)})$ & $\frac{3}{\pi ^3}+\frac{1}{\pi }$ & $-\frac{15}{2 \pi ^5}+\frac{5}{2 \pi ^3}+\frac{5}{3 \pi }$ & $\frac{5}{2 \pi }-\frac{45}{2 \pi ^5}$ & $\frac{315}{4 \pi ^7}-\frac{105}{4 \pi ^5}-\frac{7}{\pi ^3}+\frac{7}{2 \pi }$ & $\frac{315}{\pi ^7}-\frac{21}{\pi ^3}+\frac{14}{3 \pi }$ & $-\frac{2835}{2 \pi ^9}+\frac{945}{2 \pi ^7}+\frac{126}{\pi ^5}-\frac{45}{\pi ^3}+\frac{6}{\pi }$\\
			\hline
			\parbox[0pt][2em][c]{0cm}{}		$\bE v_2(W_{n,3}^{(3)})$ & $\frac{6}{\pi ^2}$ & $\frac{10}{\pi ^2}-\frac{15}{\pi ^4}$ & $\frac{15}{\pi ^2}-\frac{45}{\pi ^4}$ & $\frac{315}{2 \pi ^6}-\frac{105}{\pi ^4}+\frac{21}{\pi ^2}$ & $\frac{630}{\pi ^6}-\frac{210}{\pi ^4}+\frac{28}{\pi ^2}$ & $-\frac{2835}{\pi ^8}+\frac{1890}{\pi ^6}-\frac{378}{\pi ^4}+\frac{36}{\pi ^2}$\\
			\hline
			\parbox[0pt][2em][c]{0cm}{}		$\bE v_3(W_{n,3}^{(3)})$ & $\frac{12}{\pi ^3}$ & $\frac{20}{\pi ^3}-\frac{30}{\pi ^5}$ & $\frac{30}{\pi ^3}-\frac{90}{\pi ^5}$ & $\frac{315}{\pi ^7}-\frac{210}{\pi ^5}+\frac{42}{\pi ^3}$ & $\frac{1260}{\pi ^7}-\frac{420}{\pi ^5}+\frac{56}{\pi ^3}$ & $-\frac{5670}{\pi ^9}+\frac{3780}{\pi ^7}-\frac{756}{\pi ^5}+\frac{72}{\pi ^3}$\\
			\hline
			\hline
			\parbox[0pt][2em][c]{0cm}{}		$\bE v_0(Z_{n,3}^{(3)})$ &   $1\over 4$ & $1\over 3$ & $5\over 13$ & $5\over 12$ & $7\over 16$ & $14\over 31$\\
			\hline
			\parbox[0pt][2em][c]{0cm}{}		$\bE v_1(Z_{n,3}^{(3)})$ & $3\over 8$ & $1\over 3$ & $15\over 52$ & $1\over 4$ & $7\over 32$ & $6\over 31$\\
			\hline
			\parbox[0pt][2em][c]{0cm}{}		$\bE v_2(Z_{n,3}^{(3)})$ & $1\over 4$ & $1\over 6$ & $3\over 26$ & $1\over 12$ & $1\over 16$ & $3\over 62$\\
			\hline
			\parbox[0pt][2em][c]{0cm}{}		$\bE v_3(Z_{n,3}^{(3)})$ & $1\over 16$ & $1\over 30$ & $1\over 52$ & $1\over 84$ & $1\over 128$ & $1\over 186$\\
			\hline
\end{tabular}}}

\newpage

\section{Statistical Dimensions}

{\small Values of $\bE\delta(\breve{W}_{n,2}^{(2)})$ and $\bE\delta(\breve{Z}_{n,2}^{(2)})$ for $n\in\{4,\ldots,10\}$:}

\medspace

\resizebox{\linewidth}{!}{
	\begin{tabular}{|c||c|c|c|c|c|c|c|c|}
		\hline
		\parbox[0pt][2em][c]{0cm}{}	$n$ & $3$ & $4$ & $5$ & $6$ & $7$ & $8$ & $9$ & $10$\\
		\hline
		\hline
		\parbox[0pt][2em][c]{0cm}{} $\bE\delta(\breve{W}_{n,2}^{(2)})$ & $3-\frac{12}{\pi ^2}$ & $\frac{7}{2}+\frac{48}{\pi ^4}-\frac{24}{\pi ^2}$ & $4+\frac{240}{\pi ^4}-\frac{50}{\pi ^2}$ & $\frac{9}{2}-\frac{1440}{\pi ^6}+\frac{720}{\pi ^4}-\frac{90}{\pi ^2}$ & $5-\frac{10080}{\pi ^6}+\frac{2100}{\pi ^4}-\frac{147}{\pi ^2}$ & $\frac{11}{2}+\frac{80640}{\pi ^8}-\frac{40320}{\pi ^6}+\frac{5040}{\pi ^4}-\frac{224}{\pi ^2}$ & $6+\frac{725760}{\pi ^8}-\frac{151200}{\pi ^6}+\frac{10584}{\pi ^4}-\frac{324}{\pi ^2}$ & $\frac{13}{2}-\frac{7257600}{\pi ^{10}}+\frac{3628800}{\pi ^8}-\frac{453600}{\pi
			^6}+\frac{20160}{\pi ^4}-\frac{450}{\pi ^2}$\\
		\hline
		\parbox[0pt][2em][c]{0cm}{}	$\bE\delta(\breve{Z}_{n,2}^{(2)})$ & $\frac{3}{2}$ & $\frac{17}{14}$ & $\frac{23}{22}$ & $\frac{15}{16}$ & $\frac{19}{22}$ & $\frac{47}{58}$ & $\frac{57}{74}$ & $\frac{17}{23}$\\
		\hline
	\end{tabular}
}

\bigskip

{\small Values of $\bE\delta(\breve{W}_{n,3}^{(3)})$ and $\bE\delta(\breve{Z}_{n,3}^{(3)})$ for $n\in\{4,\ldots,10\}$:}

\medspace

\resizebox{\linewidth}{!}{
	\begin{tabular}{|c||c|c|c|c|c|c|c|}
		\hline
		\parbox[0pt][2em][c]{0cm}{} $n$	& $4$ & $5$ & $6$ & $7$ & $8$ & $9$ & $10$ \\
		\hline
		\hline
		\parbox[0pt][2em][c]{0cm}{} $\bE\delta(\breve{W}_{n,3}^{(3)})$ & $\frac{89}{30}+\frac{3}{\pi ^4}-\frac{6}{\pi ^2}$ & $3+\frac{15}{\pi ^4}-\frac{10}{\pi ^2}$ & $\frac{43}{14}-\frac{45}{2 \pi ^6}+\frac{45}{\pi ^4}-\frac{31}{2 \pi ^2}$ & $\frac{19}{6}-\frac{315}{2 \pi ^6}+\frac{105}{\pi ^4}-\frac{91}{4 \pi ^2}$ & $\frac{59}{18}+\frac{315}{\pi ^8}-\frac{630}{\pi ^6}+\frac{217}{\pi ^4}-\frac{32}{\pi ^2}$ & $\frac{17}{5}+\frac{2835}{\pi ^8}-\frac{1890}{\pi ^6}+\frac{819}{2 \pi ^4}-\frac{87}{2 \pi ^2}$ & $\frac{233}{66}-\frac{14175}{2 \pi ^{10}}+\frac{14175}{\pi ^8}-\frac{9765}{2 \pi ^6}+\frac{720}{\pi
			^4}-\frac{115}{2 \pi ^2}$\\
		\hline
		\parbox[0pt][2em][c]{0cm}{}	$\bE\delta(\breve{Z}_{n,3}^{(3)})$ & $2$ & $\frac{49}{30}$ & $\frac{18}{13}$ & $\frac{17}{14}$ & $\frac{35}{32}$ & $\frac{187}{186}$ & $\frac{61}{65}$\\
		\hline
	\end{tabular}
}

\section{Intersection probabilities}\label{app:IntesectionProb}

{\small Values of $\bP(W_{n,2}\cap V_{m,2}\neq\varnothing)$ for $n,m\in\{3,\ldots,8\}$:}

\medspace

\resizebox{\linewidth}{!}{
\begin{tabular}{|c||c|c|c|c|c|c|}
\hline
 \parbox[0pt][2em][c]{0cm}{} $n$	& $3$ & $4$ & $5$ & $6$ & $7$ & $8$ \\
 \hline
 \hline
 \parbox[0pt][2em][c]{0cm}{}	$m=3$ & $\frac{13}{8}-\frac{9}{\pi ^2}$ & $2+\frac{144}{\pi ^6}-\frac{15}{\pi ^2}$ & $\frac{19}{8}+\frac{120}{\pi ^4}-\frac{30}{\pi ^2}$ & $\frac{11}{4}-\frac{4320}{\pi ^8}+\frac{360}{\pi ^4}-\frac{54}{\pi ^2}$ & $\frac{25}{8}-\frac{5040}{\pi ^6}+\frac{1134}{\pi ^4}-\frac{357}{4 \pi ^2}$ & $\frac{7}{2}+\frac{241920}{\pi ^{10}}-\frac{20160}{\pi ^6}+\frac{2856}{\pi
 	^4}-\frac{138}{\pi ^2}$\\
 \hline
 \parbox[0pt][2em][c]{0cm}{} $m=4$ & $2+\frac{144}{\pi ^6}-\frac{15}{\pi ^2}$ & $\frac{5}{2}-\frac{1152}{\pi ^8}+\frac{576}{\pi ^6}-\frac{24}{\pi ^2}$ & $3-\frac{2880}{\pi ^8}+\frac{480}{\pi ^6}+\frac{180}{\pi ^4}-\frac{45}{\pi ^2}$ & $\frac{7}{2}+\frac{34560}{\pi ^{10}}-\frac{17280}{\pi ^8}+\frac{720}{\pi
 	^6}+\frac{540}{\pi ^4}-\frac{78}{\pi ^2}$ & $4+\frac{120960}{\pi ^{10}}-\frac{20160}{\pi ^8}-\frac{6552}{\pi ^6}+\frac{1638}{\pi
 	^4}-\frac{126}{\pi ^2}$ & $\frac{9}{2}-\frac{1935360}{\pi ^{12}}+\frac{967680}{\pi ^{10}}-\frac{40320}{\pi
 	^8}-\frac{28896}{\pi ^6}+\frac{4032}{\pi ^4}-\frac{192}{\pi ^2}$\\
 \hline
  \parbox[0pt][2em][c]{0cm}{} $m=5$ & $\frac{19}{8}+\frac{120}{\pi ^4}-\frac{30}{\pi ^2}$ & $3-\frac{2880}{\pi ^8}+\frac{480}{\pi ^6}+\frac{180}{\pi ^4}-\frac{45}{\pi ^2}$ & $\frac{29}{8}-\frac{1200}{\pi ^6}+\frac{550}{\pi ^4}-\frac{75}{\pi ^2}$ & $\frac{17}{4}+\frac{86400}{\pi ^{10}}-\frac{14400}{\pi ^8}-\frac{3600}{\pi
  	^6}+\frac{1230}{\pi ^4}-\frac{120}{\pi ^2}$ & $\frac{39}{8}+\frac{50400}{\pi ^8}-\frac{20580}{\pi ^6}+\frac{2940}{\pi ^4}-\frac{735}{4
  	\pi ^2}$ & $\frac{11}{2}-\frac{4838400}{\pi ^{12}}+\frac{806400}{\pi ^{10}}+\frac{201600}{\pi
  	^8}-\frac{65520}{\pi ^6}+\frac{6400}{\pi ^4}-\frac{270}{\pi ^2}$\\
  \hline
  \parbox[0pt][2em][c]{0cm}{} $m=6$ & $\frac{11}{4}-\frac{4320}{\pi ^8}+\frac{360}{\pi ^4}-\frac{54}{\pi ^2}$ & $\frac{7}{2}+\frac{34560}{\pi ^{10}}-\frac{17280}{\pi ^8}+\frac{720}{\pi
  	^6}+\frac{540}{\pi ^4}-\frac{78}{\pi ^2}$ & $\frac{17}{4}+\frac{86400}{\pi ^{10}}-\frac{14400}{\pi ^8}-\frac{3600}{\pi
  	^6}+\frac{1230}{\pi ^4}-\frac{120}{\pi ^2}$ & $5-\frac{1036800}{\pi ^{12}}+\frac{518400}{\pi ^{10}}-\frac{43200}{\pi
  	^8}-\frac{10800}{\pi ^6}+\frac{2430}{\pi ^4}-\frac{180}{\pi ^2}$ & $\frac{23}{4}-\frac{3628800}{\pi ^{12}}+\frac{604800}{\pi ^{10}}+\frac{120960}{\pi
  	^8}-\frac{44100}{\pi ^6}+\frac{5040}{\pi ^4}-\frac{525}{2 \pi ^2}$ & $\frac{13}{2}+\frac{58060800}{\pi ^{14}}-\frac{29030400}{\pi ^{12}}+\frac{2419200}{\pi
  	^{10}}+\frac{564480}{\pi ^8}-\frac{126000}{\pi ^6}+\frac{9960}{\pi ^4}-\frac{372}{\pi
  	^2}$\\
   \hline
  \parbox[0pt][2em][c]{0cm}{} $m=7$ & $\frac{25}{8}-\frac{5040}{\pi ^6}+\frac{1134}{\pi ^4}-\frac{357}{4 \pi ^2}$ & $4+\frac{120960}{\pi ^{10}}-\frac{20160}{\pi ^8}-\frac{6552}{\pi ^6}+\frac{1638}{\pi
  	^4}-\frac{126}{\pi ^2}$ & $\frac{39}{8}+\frac{50400}{\pi ^8}-\frac{20580}{\pi ^6}+\frac{2940}{\pi ^4}-\frac{735}{4
  	\pi ^2}$ & $\frac{23}{4}-\frac{3628800}{\pi ^{12}}+\frac{604800}{\pi ^{10}}+\frac{120960}{\pi
  	^8}-\frac{44100}{\pi ^6}+\frac{5040}{\pi ^4}-\frac{525}{2 \pi ^2}$ & $\frac{53}{8}-\frac{2116800}{\pi ^{10}}+\frac{758520}{\pi ^8}-\frac{114660}{\pi
  	^6}+\frac{18081}{2 \pi ^4}-\frac{735}{2 \pi ^2}$ & $\frac{15}{2}+\frac{203212800}{\pi ^{14}}-\frac{33868800}{\pi ^{12}}-\frac{6773760}{\pi
  	^{10}}+\frac{2328480}{\pi ^8}-\frac{268800}{\pi ^6}+\frac{16044}{\pi
  	^4}-\frac{504}{\pi ^2}$\\
   \hline
  \parbox[0pt][2em][c]{0cm}{} $m=8$ & $\frac{7}{2}+\frac{241920}{\pi ^{10}}-\frac{20160}{\pi ^6}+\frac{2856}{\pi
  	^4}-\frac{138}{\pi ^2}$ & $\frac{9}{2}-\frac{1935360}{\pi ^{12}}+\frac{967680}{\pi ^{10}}-\frac{40320}{\pi
  	^8}-\frac{28896}{\pi ^6}+\frac{4032}{\pi ^4}-\frac{192}{\pi ^2}$ & $\frac{11}{2}-\frac{4838400}{\pi ^{12}}+\frac{806400}{\pi ^{10}}+\frac{201600}{\pi
  	^8}-\frac{65520}{\pi ^6}+\frac{6400}{\pi ^4}-\frac{270}{\pi ^2}$ & $\frac{13}{2}+\frac{58060800}{\pi ^{14}}-\frac{29030400}{\pi ^{12}}+\frac{2419200}{\pi
  	^{10}}+\frac{564480}{\pi ^8}-\frac{126000}{\pi ^6}+\frac{9960}{\pi ^4}-\frac{372}{\pi
  	^2}$ & $\frac{15}{2}+\frac{203212800}{\pi ^{14}}-\frac{33868800}{\pi ^{12}}-\frac{6773760}{\pi
  	^{10}}+\frac{2328480}{\pi ^8}-\frac{268800}{\pi ^6}+\frac{16044}{\pi
  	^4}-\frac{504}{\pi ^2}$ & $\frac{17}{2}-\frac{3251404800}{\pi ^{16}}+\frac{1625702400}{\pi
  	^{14}}-\frac{135475200}{\pi ^{12}}-\frac{29352960}{\pi ^{10}}+\frac{6491520}{\pi
  	^8}-\frac{551040}{\pi ^6}+\frac{25984}{\pi ^4}-\frac{672}{\pi ^2}$\\
  \hline
\end{tabular}
}

\bigskip

{\small Values of $\bP(W_{n,3}\cap V_{m,3}\neq\varnothing)$ for $n,m\in\{4,\ldots,8\}$:}

\medspace

\resizebox{\linewidth}{!}{
	\begin{tabular}{|c||c|c|c|c|c|}
		\hline
		\parbox[0pt][2em][c]{0cm}{}	$n$ & $4$ & $5$ & $6$ & $7$ & $8$\\
		\hline
		\hline
		\parbox[0pt][2em][c]{0cm}{}	$m=4$ & $\frac{16}{15}+\frac{9}{\pi ^6}-\frac{3}{\pi ^4}-\frac{3}{\pi ^2}$ & $\frac{67}{60}-\frac{45}{4 \pi ^8}+\frac{45}{2 \pi ^6}-\frac{14}{3 \pi ^2}$ & $\frac{247}{210}-\frac{135}{2 \pi ^8}+\frac{135}{4 \pi ^6}+\frac{21}{2 \pi
			^4}-\frac{7}{\pi ^2}$ & $\frac{149}{120}+\frac{945}{8 \pi ^{10}}-\frac{945}{4 \pi ^8}+\frac{63}{4 \pi
			^6}+\frac{133}{4 \pi ^4}-\frac{81}{8 \pi ^2}$ & $\frac{59}{45}+\frac{945}{\pi ^{10}}-\frac{945}{2 \pi ^8}-\frac{126}{\pi ^6}+\frac{78}{\pi
			^4}-\frac{85}{6 \pi ^2}$ \\
		\hline
		\parbox[0pt][2em][c]{0cm}{}	$m=5$ & $\frac{67}{60}-\frac{45}{4 \pi ^8}+\frac{45}{2 \pi ^6}-\frac{14}{3 \pi ^2}$ & $\frac{7}{6}-\frac{225}{4 \pi ^8}+\frac{75}{2 \pi ^6}+\frac{25}{4 \pi ^4}-\frac{20}{3 \pi
			^2}$ & $\frac{103}{84}+\frac{675}{8 \pi ^{10}}-\frac{675}{4 \pi ^8}+\frac{315}{8 \pi
			^6}+\frac{175}{8 \pi ^4}-\frac{75}{8 \pi ^2}$ & $\frac{31}{24}+\frac{4725}{8 \pi ^{10}}-\frac{1575}{4 \pi ^8}-\frac{105}{8 \pi
			^6}+\frac{105}{2 \pi ^4}-\frac{155}{12 \pi ^2}$ & $\frac{49}{36}-\frac{4725}{4 \pi ^{12}}+\frac{4725}{2 \pi ^{10}}-\frac{2205}{4 \pi
			^8}-\frac{955}{4 \pi ^6}+\frac{435}{4 \pi ^4}-\frac{209}{12 \pi ^2}$\\
		\hline
		\parbox[0pt][2em][c]{0cm}{}	$m=6$ & $\frac{247}{210}-\frac{135}{2 \pi ^8}+\frac{135}{4 \pi ^6}+\frac{21}{2 \pi
			^4}-\frac{7}{\pi ^2}$ & $\frac{103}{84}+\frac{675}{8 \pi ^{10}}-\frac{675}{4 \pi ^8}+\frac{315}{8 \pi
			^6}+\frac{175}{8 \pi ^4}-\frac{75}{8 \pi ^2}$ & $\frac{9}{7}+\frac{2025}{4 \pi ^{10}}-\frac{675}{2 \pi ^8}+\frac{45}{4 \pi
			^6}+\frac{45}{\pi ^4}-\frac{25}{2 \pi ^2}$ & $\frac{227}{168}-\frac{14175}{16 \pi ^{12}}+\frac{14175}{8 \pi ^{10}}-\frac{8505}{16 \pi
			^8}-\frac{1785}{16 \pi ^6}+\frac{1383}{16 \pi ^4}-\frac{33}{2 \pi ^2}$ & $\frac{179}{126}-\frac{14175}{2 \pi ^{12}}+\frac{4725}{\pi ^{10}}-\frac{315}{\pi
			^8}-\frac{480}{\pi ^6}+\frac{631}{4 \pi ^4}-\frac{43}{2 \pi ^2}$\\
		\hline
		\parbox[0pt][2em][c]{0cm}{}	$m=7$ & $\frac{149}{120}+\frac{945}{8 \pi ^{10}}-\frac{945}{4 \pi ^8}+\frac{63}{4 \pi
			^6}+\frac{133}{4 \pi ^4}-\frac{81}{8 \pi ^2}$ & $\frac{31}{24}+\frac{4725}{8 \pi ^{10}}-\frac{1575}{4 \pi ^8}-\frac{105}{8 \pi
			^6}+\frac{105}{2 \pi ^4}-\frac{155}{12 \pi ^2}$ & $\frac{227}{168}-\frac{14175}{16 \pi ^{12}}+\frac{14175}{8 \pi ^{10}}-\frac{8505}{16 \pi
			^8}-\frac{1785}{16 \pi ^6}+\frac{1383}{16 \pi ^4}-\frac{33}{2 \pi ^2}$ & $\frac{17}{12}-\frac{99225}{16 \pi ^{12}}+\frac{33075}{8 \pi ^{10}}-\frac{6615}{16 \pi
			^8}-\frac{735}{2 \pi ^6}+\frac{1141}{8 \pi ^4}-\frac{21}{\pi ^2}$ & $\frac{107}{72}+\frac{99225}{8 \pi ^{14}}-\frac{99225}{4 \pi ^{12}}+\frac{59535}{8 \pi
			^{10}}+\frac{6825}{8 \pi ^8}-\frac{7791}{8 \pi ^6}+\frac{1869}{8 \pi ^4}-\frac{637}{24
			\pi ^2}$\\
			\hline
		\parbox[0pt][2em][c]{0cm}{}	$m=8$ & $\frac{59}{45}+\frac{945}{\pi ^{10}}-\frac{945}{2 \pi ^8}-\frac{126}{\pi ^6}+\frac{78}{\pi
			^4}-\frac{85}{6 \pi ^2}$ & $\frac{49}{36}-\frac{4725}{4 \pi ^{12}}+\frac{4725}{2 \pi ^{10}}-\frac{2205}{4 \pi
			^8}-\frac{955}{4 \pi ^6}+\frac{435}{4 \pi ^4}-\frac{209}{12 \pi ^2}$ & $\frac{179}{126}-\frac{14175}{2 \pi ^{12}}+\frac{4725}{\pi ^{10}}-\frac{315}{\pi
			^8}-\frac{480}{\pi ^6}+\frac{631}{4 \pi ^4}-\frac{43}{2 \pi ^2}$ & $\frac{107}{72}+\frac{99225}{8 \pi ^{14}}-\frac{99225}{4 \pi ^{12}}+\frac{59535}{8 \pi
			^{10}}+\frac{6825}{8 \pi ^8}-\frac{7791}{8 \pi ^6}+\frac{1869}{8 \pi ^4}-\frac{637}{24
			\pi ^2}$ & $\frac{14}{9}+\frac{99225}{\pi ^{14}}-\frac{66150}{\pi ^{12}}+\frac{6615}{\pi
			^{10}}+\frac{4620}{\pi ^8}-\frac{1967}{\pi ^6}+\frac{350}{\pi ^4}-\frac{98}{3 \pi ^2}$\\
		  \hline
	\end{tabular}
}

\end{landscape}

\end{document}